\newcommand\Y{\mathbb Y}
\newcommand\Z{\mathbb Z}
\newcommand\C{\mathbb C}
\newcommand\R{\mathbb R}
\newcommand\G{\mathbb G}
\newcommand\GT{{\mathbb{GT}}}
\newcommand\De{\Delta}
\newcommand\de{\delta}
\newcommand\La{\Lambda}
\newcommand\si{\sigma}
\newcommand\epsi{\varepsilon}
\newcommand\VG{\varGamma}
\renewcommand\L{\mathfrak L}
\newcommand\X{\mathfrak X}
\renewcommand\Y{\mathfrak Y}
\newcommand\Sym{\operatorname{Sym}}
\newcommand\card{\operatorname{card}}
\newcommand\sgn{\operatorname{sgn}}
\newcommand\wt{\widetilde}
\newcommand\GG{\wt\G}
\newcommand\Lan{\La^{N+1}_N}
\newcommand\p{\partial}
\renewcommand\P{\mathscr P}
\newcommand\V{\mathcal V}
\newcommand\W{\mathcal W}
\newcommand\ms{\medskip}
\renewcommand\ss{\smallskip}
\newtheorem{theorem}{Theorem}[section]
\newtheorem{proposition}[theorem] {Proposition}
\newtheorem{corollary}[theorem]{Corollary}
\newtheorem{lemma}[theorem]{Lemma}
\theoremstyle{definition}
\newtheorem{definition}[theorem]{Definition}
\newtheorem{remark}[theorem]{Remark}
\numberwithin{equation}{section}
\thanks{The present research was carried out at the Institute for Information Transmission Problems of the Russian Academy of Sciences at the expense of the Russian Science Foundation (project 14-50-00150).
}
\begin{document}

\title[]{Extended Gelfand--Tsetlin graph, its  $q$-boundary, and $q$-B-splines}

\author{Grigori Olshanski}

\date{}

\begin{abstract}

The boundary of the Gelfand--Tsetlin graph is an infinite dimensional locally compact space whose points parameterize the extreme characters of the infinite-dimensional group $U(\infty)$. The problem of harmonic analysis on the group $U(\infty)$ leads to a continuous family of probability measures on the boundary --- the so-called zw-measures. In recent work by Vadim Gorin and the author we began studying a $q$-analogue of the zw-measures. It turned out that for their construction it is necessary to introduce a novel combinatorial object --- the extended Gelfand--Tsetlin graph. In the present paper, it is proved that the Markov kernels connected with the extended Gelfand--Tsetlin graph and its $q$-boundary possess the Feller property. This property is needed for constructing a Markov dynamics on the $q$-boundary. A connection with the B-splines and their $q$-analogues is also discussed. 

\end{abstract}

\keywords{Gelfand--Tsetlin graph, Markov kernels, Feller property, B-splines}

\maketitle


\section{Introduction}\label{sect1}

The \emph{Gelfand--Tsetlin graph} $\GT$ is a graded graph with infinite levels $\GT_N$, $N=1,2,\dots$\,. The vertices of the $N$th level $\GT_N$ parameterize the irreducible characters of the compact unitary group $U(N)$, and the edges of $\GT$ encode the branching rule of characters under restriction from $U(N)$ to $U(N-1)$. Following Vershik and Kerov, one can define the \emph{boundary} $\p\GT$ of the graph: this is an infinite-dimensional topological space, which can be regarded as the simplest version of dual object of the group $U(\infty):=\varinjlim U(N)$. The problem of harmonic analysis for the group $U(\infty)$ leads to a family of remarkable probability measures on $\p\GT$ called \emph{zw-measures}. Each zw-measure gives rise to a determinantal point process of log-gas type. Further, one can construct continuous time Markov processes on $\p\GT$ for which the zw-measures serve as stationary distributions. See Olshanski \cite{Ols-JFA}, Borodin--Olshanski \cite{BO-AnnMath}, \cite{BO-JFA}, \cite{BO-AdvMath}, and survey papers Borodin--Olshanski \cite{BO-ECM}, Olshanski \cite{Ols-ICM}, \cite{Ols-Piter}.

The present paper is a continuation of the recent paper \cite{GO} by Vadim Gorin and the author. Our goal is to build a $q$-version of the whole theory related to the Gelfand--Tsetlin graph. In \cite{GO} it was shown that there exists a $q$-analogue of the zw-measures, but to construct them it is necessary to replace the graph $\GT$ by a novel combinatorial object, which we called the \emph{extended Gelfand--Tsetlin graph}. This graph, denoted by $\G$, contains $\GT$. The graph $\G$ violates a customary finiteness condition, which makes it impossible to define the ordinary boundary; however, a $q$-version of the boundary is well defined. 

The principal purpose of the present paper is to prepare a foundation for constructing Markov processes related to the $q$-zw-measures, but I hope that some results are of independent interest. Here is a brief description of the contents of the paper. 

Section \ref{sect2} introduces a few necessary definitions. We fix three parameters $\zeta_-<0$, $\zeta_+>0$, and $q\in(0,1)$, and introduce the \emph{two-sided $q$-lattice}
\begin{equation}\label{eq1.A}
\L:=\{\zeta_-q^n: n\in\Z\}\cup\{\zeta_+q^n: n\in\Z\}\subset\R.
\end{equation}
The vertices of $\G_N$ are identified with the $N$-point configurations (=subsets) on $\L$, and the edges of $\G$ are equipped with formal multiplicities of the form $\zeta_\pm q^n$. As explained in \cite{GO}, from these data one can define an infinite sequence $\{\Lan:N=1,2,\dots\}$ of stochastic matrices, where the $N$th matrix $\Lan$ has format $\G_{N+1}\times\G_N$, that is, $\G_{N+1}$ parameterizes the rows, and $\G_N$ parameterises the columns. These matrices are actually all that we need from the graph $\G$; in particular, the $q$-boundary $\p\G$ is entirely determined by the collection $\{\Lan:N=1,2,\dots\}$. Proposition \ref{prop2.B}, which is borrowed from \cite{GO}, is the foundation of our computations.

In Section \ref{sect3} we study the telescopic products $\La^N_K=\La^N_{N-1}\dots\La^{K+1}_K$, where $N>K\ge1$. These are also stochastic matrices; $\La^N_K$ has format $\G_N\times\G_K$.  Theorem \ref{thm3.A} provides an explicit formula for $\La^N_K$; this is a $q$-analogue of  \cite[Theorem 7.2]{BO-AdvMath} and an extension of  Theorem 1.2 from Petrov \cite{P}. Further,Theorem \ref{thm3.B} generalizes Theorem \ref{thm3.A}. 

In Section \ref{sect4} we slightly extend the definition of the matrices $\Lan$. For future application to Markov dynamics we need to deal with Feller processes on locally compact spaces, while the $q$-boundary of $\G$ is not locally compact. This obstacle is easily overcome by a completion of the spaces $\G_N$: we replace each $\G_N$ by a larger topological space $\GG_N$, which contains $\G_N$ as a dense subset. Proposition \ref{prop4.A} shows that our stochastic matrices can be extended by continuity. 

In the short Section \ref{sect5} we formulate general facts concerning the entrance boundary for an infinite chain of discrete spaces linked by stochastic matrices. This material can be presented in various ways; for our purposes it is convenient to adopt the approach of Olshanski \cite{Ols-JFA}. 

In Section \ref{sect6} we describe the boundary of the graph $\GG$ (its levels are the spaces $\GG_N$): Theorem \ref{thm6.A} says that the boundary can be identified with the set $\GG_\infty$ of two-sided bounded configurations $X$ on $\L$, infinite or finite; the boundedness condition means that if $X$ is infinite, then its points accumulate near $0$ but do not approach $\pm\infty$. The set $\GG_\infty$ has a natural structure of locally compact ultrametric space. Theorem \ref{thm6.A} is a slight generalization of Theorem 3.12 of \cite{GO}, which describes  the boundary of $\G$, but the proof is different: the approach of \cite{GO} relies on some qualitative estimates of the large-$N$ asymptotics of the matrices $\La^N_K$, while in the present paper we use exact formulas obtained in Section \ref{sect3}. In this aspect our method is closer to those of Borodin--Olshanski \cite{BO-AdvMath} and Petrov \cite{P}. 

The subject of Section \ref{sect7} is a connection between the graph $\G$ and the \emph{$q$-B-splines}, recently introduced by Simeonov and Goldman \cite{SG}. The $q$-B-splines are certain discrete analogues of the classical B-splines. We show that the $q$-B-splines with the knots on the lattice $\L$ are given by the rows of the stochastic matrices $\La^N_1$. The paper \cite{BDGO} contains (among other things)  a $q$-analogue of the classical Hermite--Genocchi formula, which relates splines to divided differences. We give a very short proof of this $q$-analogue and derive from it a technical result (Corollary \ref{cor7.B}), which is then used in Section \ref{sect9}. 

Sections \ref{sect8} and \ref{sect9} are devoted to the proof of the main results: Theorem \ref{thm9.A} and \ref{thm9.B}. They assert that the Markov kernels $\La^N_K$ and $\La^\infty_K$ related to the graph $\GG$ possess the Feller property, i.e. the corresponding contraction operators act in Banach spaces of continuous functions vanishing at infinity. The Feller property is necessary for constructing Markov processes by the method of intertwiners \cite{BO-JFA}, \cite{Ols-Piter}. 

Section \ref{sect10} contains a few remarks concerning the connection between the subject of the present paper and the B-splines.

\section{Preliminaries}\label{sect2}

\subsection{Generalities about Markov kernels}\label{sect2.1}

For more details, see e.g. Meyer \cite{Meyer}.

$\bullet$ Given two Borel spaces $\mathfrak X$ and $\mathfrak Y$, let $X$ range over $\X$ and $A$ range over Borel subsets of $\Y$ (here and below the term ``Borel space'' means a space with a distinguished sigma algebra of subsets). A \emph{Markov kernel} $\La: \X\dasharrow\Y$ is a nonnegative function $\La(X,A)$ such that the quantity $\La(X,A)$  is a Borel measurable function in the first variable and a probability measure on $\Y$ as a set function in the second variable.  We will also use the alternative notation $\La(X,dY)$. 
\ss

$\bullet$ Alternatively, $\La$ can be viewed as a Borel map $\X\to\P(\Y)$, where $\P(\,\cdot\,)$ denotes the set of probability Borel measures on a given Borel space. This map extends to an affine map $\La:\P(\X)\to\P(\Y)$.  
\ss

$\bullet$ By duality, $\La$ also determines a contractive linear operator $\mathscr B(\Y)\to \mathscr B(\X)$, where the symbol $\mathscr B(\,\cdot\,)$ denotes the Banach space of bounded Borel functions with the supremum norm on a given Borel space. More precisely, this operator is defined by 
$$
(\La F)(X)=\int_{Y\in\Y}\La(X,dY)F(Y), \qquad \textrm{where} \quad X\in\X, \quad Y\in\Y, \quad F\in\mathscr B(\Y).
$$
\ss

$\bullet$ Given two Markov kernels, $\La': \X\dasharrow\Y$ and $\La'': \Y\dasharrow \mathfrak Z$, their composition $\La'\La''$ is a Markov kernel $\X\dasharrow \mathfrak Z$ defined by
$$
(\La'\La'')(X,dZ)=\int_{Y\in\mathfrak Y}\La(X,dY)\La''(Y,dZ).
$$
Equivalently, $\La'\La''$  can be defined as the composed map $\P(\X)\to\P(\Y)\to\P(\mathfrak Z)$.

$\bullet$ If $\Y$ is a discrete space, then a Markov kernel $\La:\X\dasharrow\Y$ can be viewed as a function $\La(X,Y)$ on $\X\times\Y$, where we set $\La(X,Y):=\La(X,\{Y\})$. In this case the integral defining the corresponding operator reduces to a sum:
$$
(\La F)(X)=\sum_{Y\in\Y}\La(X,Y)F(Y).
$$

\ss

$\bullet$ If both $\X$ and $\Y$ are discrete spaces, then $\La$ is simply a stochastic matrix of format $\X\times\Y$. In the case of discrete spaces, composition of Markov kernels reduces to matrix multiplication.

\subsection{The two-sided $q$-lattice $\L$ and interlacing configurations}\label{sect2.2}
Fix parameters $q\in(0,1)$, $\zeta_+>0$ and $\zeta_-<0$, and set
$$
\L^\pm:=\{\zeta_\pm q^n: n\in\Z\}, \qquad \L:=\L^+\cup\L^-.
$$
We call $\L$ the \emph{two-sided $q$-lattice} in $\R$.

Define the \emph{intervals} in $\L$ as follows: if $a<a'$ are two points in $\L$, then
\begin{equation}\label{eq2.D}
I(a,a'):=\begin{cases} [a,a')\cap\L, & a<a'<0,\\
[a,a']\cup\L, & a<0<a',\\
(a,a']\cup\L, & 0<a<a'.
\end{cases}
\end{equation}

Note that the definition of intervals is symmetric with respect to the
reflection about $0$ combined with the switching $(\zeta_+,\zeta_-)\to (-\zeta_-,-\zeta_+)$.

Next, we introduce the \emph{extended $q$-Gelfand-Tsetlin graph}, denoted by
$\G$. Its $M$th level $\G_M$ is formed by $M$-point configurations
$A=(a_1<\dots<a_M)$ on $\L$, $M=1,2,\dots$\,. We may regard $\G_M$ as a subset of the cone (a closed Weyl chamber)
$$
\De_M:=\{(a_1,\dots,a_M)\in\R^M: a_1\le\dots\le a_M\}.
$$

\begin{definition}\label{def2.A}
We say that two configurations $X\in\G_{N+1}$ and $Y\in\G_{N}$ \emph{interlace}
if
$$
y_i\in I(x_i, x_{i+1}), \quad i=1,\dots,N.
$$
Then we write $Y\prec X$ or $X\succ Y$.
\end{definition}

By definition, each pair $X\succ Y$ forms an \emph{edge} of $\G$. 
Note that in the case $a<0<b$, the interval $I(a,b)$ contains infinitely many
points. It follows that whenever $X$ contains points of opposite signs, there
are infinitely many edges $X\succ Y$. This is a new effect: in all examples of
branching graphs studied so far, the number of edges issued from a vertex of level $N+1$ and
directed downwards to the level $N$ was always finite.

\subsection{Stochastic matrices $\La^N_K$}
For $X=(x_1<\dots<x_{N+1})\in\G_{N+1}$ and $Y=(y_1<\dots<y_N)\in\G_N$  we set
\begin{equation}\label{eq2.A}
\La^{N+1}_N(X,Y):=\begin{cases} (q;q)_N|Y|\dfrac{|V(Y)|}{|V(X)|}, & Y\prec X,\\
0, & \textrm{otherwise}. \end{cases}
\end{equation}
Here 
$$
(a;q)_N:=\prod_{i=0}^{N-1}(1-aq^i), \qquad N=0,1,2,\dots,
$$
is the standard notation for the $q$-Pochhammer symbol (see Gasper and Rahman \cite{GR}) and 
we also use the following notation: if $A=(a_1<\dots<a_M)\in\G_M$, then 
$$
|A|:=|a_1|\dots |a_M|
$$
and
$$
V(a_1,\dots,a_M):=\prod_{1\le i<j\le M}(a_i-a_j),
$$
so that 
$$
|V(A)|=(-1)^{M(M-1)/2}V(a_1,\dots,a_M)=\prod_{1\le i<j\le M}(a_j-a_i)>0.
$$ 

\begin{proposition}[see \cite{GO}]\label{prop2.A}
We have
$$
\sum_{Y\in\G_N}\La^{N+1}_N(X,Y)=1, \qquad \forall X\in\G_{N+1}, \quad N=1,2,\dots,
$$
so that $\La^{N+1}_N$ is a stochastic matrix of format $G_{N+1}\times\G_N$.
\end{proposition}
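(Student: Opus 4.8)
\emph{Plan of proof.} Fix $X=(x_1<\dots<x_{N+1})\in\G_{N+1}$. Since the denominator $|V(X)|$ and the factor $(q;q)_N$ in \eqref{eq2.A} do not depend on $Y$, the assertion is equivalent to the identity
\begin{equation*}
\sum_{Y\prec X}|Y|\,|V(Y)|=\frac{|V(X)|}{(q;q)_N},
\end{equation*}
where the sum is over all $Y=(y_1<\dots<y_N)\in\G_N$ interlacing with $X$. The plan is to turn the left-hand side into a single determinant, evaluate it by the multilinearity of the determinant together with the self-similarity of the lattice $\L$, and finally recognize the result as $|V(X)|/(q;q)_N$.

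First I would write the summand as a determinant. Because $y_1<\dots<y_N$, the Vandermonde factor equals $|V(Y)|=\det[y_i^{\,j-1}]_{i,j=1}^N$, and pulling the scalar $|y_i|$ into the $i$th row gives
\begin{equation*}
|Y|\,|V(Y)|=\det\bigl[\,|y_i|\,y_i^{\,j-1}\,\bigr]_{i,j=1}^N.
\end{equation*}
By Definition \ref{def2.A}, an interlacing $Y\prec X$ is exactly a choice of $y_i\in I(x_i,x_{i+1})$ for each $i$; the intervals $I(x_1,x_2),\dots,I(x_N,x_{N+1})$ are pairwise disjoint and increasing (the half-open conventions in \eqref{eq2.D} assign each shared endpoint to a single interval), so every such choice yields a genuine configuration $Y\in\G_N$ and there is no over-counting. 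Since the $i$th row of the matrix above depends only on $y_i$, multilinearity of the determinant in its rows lets me carry the summation inside:
\begin{equation*}
\sum_{Y\prec X}|Y|\,|V(Y)|=\det\bigl[S_{ij}\bigr]_{i,j=1}^N,\qquad S_{ij}:=\sum_{y\in I(x_i,x_{i+1})}|y|\,y^{\,j-1}.
\end{equation*}

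The computational heart of the argument is the evaluation of $S_{ij}$. Writing $|y|\,y^{\,j-1}=\sgn(y)\,y^{\,j}$ and using that each side $\L^\pm$ of the lattice is invariant under multiplication by $q$, the tail sums toward $0$ telescope: for a positive lattice point $c$ one finds $\sum_{0<y\le c}y^{\,j}=c^{\,j}/(1-q^{\,j})$, and symmetrically on the negative side. Subtracting such tails at the two endpoints --- and checking this uniformly in the three sign cases of \eqref{eq2.D}, the case $x_i<0<x_{i+1}$ combining both sides --- gives the single clean formula $S_{ij}=(x_{i+1}^{\,j}-x_i^{\,j})/(1-q^{\,j})$. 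I expect this step to be the main obstacle, chiefly in handling convergence near $0$ (where $\L$ accumulates) and in verifying that all three cases collapse to one formula. Factoring $1/(1-q^{\,j})$ out of the $j$th column produces $\prod_{j=1}^N(1-q^{\,j})^{-1}=(q;q)_N^{-1}$ in front of $\det[x_{i+1}^{\,j}-x_i^{\,j}]_{i,j=1}^N$.

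It remains to identify this last determinant with $|V(X)|$. I would start from the $(N+1)\times(N+1)$ Vandermonde matrix $[x_k^{\,j}]$ with rows $k=1,\dots,N+1$ and columns $j=0,\dots,N$, whose determinant is $|V(X)|$, subtract each row from the next (a unimodular operation), and expand along the now-trivial $j=0$ column $(1,0,\dots,0)^{\mathsf T}$; the resulting minor is exactly $\det[x_{i+1}^{\,j}-x_i^{\,j}]_{i,j=1}^N$, so this determinant equals $|V(X)|$. Combining the three displays gives $\sum_{Y\prec X}|Y|\,|V(Y)|=|V(X)|/(q;q)_N$, which is the required identity.
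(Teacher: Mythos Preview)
The paper does not give its own proof of this proposition; it simply cites \cite{GO}. Your argument is correct and stands on its own, so there is nothing to compare against here beyond noting that you have supplied a complete self-contained proof where the paper only imports the result.

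For the record, each step checks out. The reduction to $\sum_{Y\prec X}|Y|\,|V(Y)|=|V(X)|/(q;q)_N$ is immediate from \eqref{eq2.A}. The factorization of the sum via multilinearity is justified because the half-open conventions in \eqref{eq2.D} make the intervals $I(x_i,x_{i+1})$ pairwise disjoint, so interlacing configurations are in bijection with independent choices of $y_i\in I(x_i,x_{i+1})$. Your key evaluation $S_{ij}=(x_{i+1}^{\,j}-x_i^{\,j})/(1-q^{\,j})$ is correct in all three sign cases: on $\L^+$ one has $\sum_{0<y\le c}y^{\,j}=c^{\,j}/(1-q^{\,j})$ for $c\in\L^+$, and the mirror statement holds on $\L^-$; the mixed case $x_i<0<x_{i+1}$ is the sum of the two one-sided tails and gives the same formula. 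The final Vandermonde reduction is a standard row-reduction and cofactor expansion, exactly as you describe. The only cosmetic point is that you might state the convergence of the geometric tail sums (they converge since $j\ge1$ and $0<q<1$), but this is implicit in what you wrote.
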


For $N>K$ we set
\begin{equation}\label{eq2.C}
\La^N_K=\La^N_{N-1}\dots\La^{K+1}_K.
\end{equation}
This is a stochastic matrix of format $\G_N\times\G_K$.

Given two real numbers $a<b$, let $\G_N[a,b]\subset\G_N$ denote the subset of configurations contained in the closed interval $[a,b]$. The set $\G[a,b]$  is finite if $0<a<b$ or $a<b<0$; otherwise it is infinite.  We say that a subset of $\G_N$ is \emph{bounded} if it is contained in $\G_N[a,b]$ with appropriate $a<b$.

\begin{proposition}\label{prop2.C}
For any pair $N>K$ and any $X\in\G_N$, the support of the probability measure $\La^N_K(X,\,\cdot\,)$ is bounded. More precisely, it is contained in $\G_K[a,b]$ provided that $X\subset[a,b]$. 
\end{proposition}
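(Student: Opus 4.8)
The plan is to reduce the claim to the single-step kernels $\La^{N+1}_N$ and then propagate the bound along the telescopic product \eqref{eq2.C}. The only structural input I need is that passing from a configuration to an interlacing one can only shrink the range spanned by the points; everything else is an order-theoretic bookkeeping that requires no quantitative estimates.

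First I would record the elementary fact that, for any two points $a<a'$ of $\L$, the interval $I(a,a')$ introduced in \eqref{eq2.D} satisfies $I(a,a')\subseteq[a,a']$; this holds in each of the three cases of the definition, irrespective of whether the interval is finite or infinite. Consequently, if $Y=(y_1<\dots<y_N)\in\G_N$ interlaces with $X=(x_1<\dots<x_{N+1})\in\G_{N+1}$, that is $Y\prec X$, then $y_i\in I(x_i,x_{i+1})\subseteq[x_i,x_{i+1}]$ for every $i$, whence in particular $y_1\ge x_1$ and $y_N\le x_{N+1}$. Therefore $Y\subset[x_1,x_{N+1}]=[\min X,\max X]$. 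Since $\La^{N+1}_N(X,Y)$ is nonzero only when $Y\prec X$, by \eqref{eq2.A}, this shows that the support of $\La^{N+1}_N(X,\,\cdot\,)$ is contained in $\G_N[\min X,\max X]$, which already settles the one-step version of the proposition.

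Next I would upgrade this to arbitrary $N>K$ by unfolding the composition \eqref{eq2.C}. A configuration $Y\in\G_K$ lies in the support of $\La^N_K(X,\,\cdot\,)$ only if there is a chain $X=Z_N\succ Z_{N-1}\succ\dots\succ Z_{K+1}\succ Z_K=Y$ with every consecutive link an edge of $\G$, for otherwise every term in the sum expressing $\La^N_K(X,Y)$ vanishes. Along such a chain the one-step bound gives $\min Z_j\ge\min Z_{j+1}$ and $\max Z_j\le\max Z_{j+1}$ at each level, so by a trivial induction $\min Y\ge\min X$ and $\max Y\le\max X$, i.e. $Y\subset[\min X,\max X]$. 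Hence the support of $\La^N_K(X,\,\cdot\,)$ is contained in $\G_K[\min X,\max X]$, and in particular in $\G_K[a,b]$ whenever $X\subset[a,b]$. There is no genuine obstacle here: the entire argument rests on the single observation $I(a,a')\subseteq[a,a']$, together with the fact that $\La^{N+1}_N$ is supported on interlacing pairs. The one subtlety worth flagging is that, even though some intervals $I(a,a')$ contain infinitely many lattice points (when $a<0<a'$), so that individual kernels $\La^{N+1}_N(X,\,\cdot\,)$ may have infinite support, the range never expands; boundedness of the support is thus insensitive to how many points the intervals carry.
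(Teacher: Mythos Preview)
Your proof is correct and follows essentially the same approach as the paper's own argument: the paper simply notes that for $K=N-1$ the claim is immediate from \eqref{eq2.A} and Definition~\ref{def2.A}, and then invokes the telescopic product \eqref{eq2.C} for the general case. Your write-up spells out the details behind these two sentences (in particular the observation $I(a,a')\subseteq[a,a']$ and the induction along the chain), but the underlying idea is identical.
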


\begin{proof}
For $K=N-1$ this immediately follows from \eqref{eq2.A} and Definition \ref{def2.A}. Next, for $N-K\ge2$ we use \eqref{eq2.C}.
\end{proof}

This simple proposition plays an important role in what follows. It implies, in particular, that the operator $\La^N_K$ can be applied not only to bounded functions $F$ on $\G_K$ but, more generally, to any function which is bounded on bounded subsets.

\subsection{Schur polynomials}
Let $\Sym(N)$ denote the algebra of symmetric polynomials in $N$ variables. It has a distinguished basis formed by the Schur polynomials. We denote these polynomials by $S_{\nu\mid N}$: here the index $\nu$ is an arbitrary partition with length $\ell(\nu)$ less or equal to $N$.  

Next, we set
\begin{equation}\label{eq2.B}
\wt S_{\nu\mid N}:=\frac{S_{\nu\mid N}}{S_{\nu\mid N}(1,q,\dots q^{N-1})}.
\end{equation}

\begin{proposition}[see \cite{GO}]\label{prop2.B}
Let $N>K$  and $\nu$ be a partition with $\ell(\nu)\le K$. For every $X\in\G_N$ the following relation holds
\begin{equation}\label{eq2.B1}
\sum_{Y\in\G_K}\La^N_K(X,Y)\wt S_{\nu\mid K}(Y)=\wt S_{\nu\mid N}(X).
\end{equation}
\end{proposition}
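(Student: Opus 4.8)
\emph{Proof proposal.} The plan is to reduce the statement to the one-step kernel $\La^{N+1}_N$ and then settle that case by a determinantal computation. For the reduction I would induct on $N-K$ using the telescopic definition \eqref{eq2.C}. Writing $\La^N_K=\La^N_{N-1}\La^{N-1}_K$ and assuming \eqref{eq2.B1} for $\La^{N-1}_K$, one gets
\[
\sum_{Y\in\G_K}\La^N_K(X,Y)\,\wt S_{\nu\mid K}(Y)
=\sum_{Z\in\G_{N-1}}\La^N_{N-1}(X,Z)\,\wt S_{\nu\mid N-1}(Z),
\]
which by the one-step claim at level $N-1$ equals $\wt S_{\nu\mid N}(X)$. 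One should justify exchanging the two summations, but this is harmless: by Proposition \ref{prop2.C} all measures involved have bounded support, and a Schur polynomial is bounded on bounded subsets of $\L$, so every sum converges absolutely. Thus it suffices to prove, for $X\in\G_{N+1}$ and $\ell(\nu)\le N$, the one-step relation $\sum_{Y\prec X}\La^{N+1}_N(X,Y)\,\wt S_{\nu\mid N}(Y)=\wt S_{\nu\mid N+1}(X)$.

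For the one-step relation I would pass to the bialternant formula $S_{\nu\mid N}=\det[\,y_i^{\nu_j+N-j}\,]/\det[\,y_i^{N-j}\,]$, so that $|V(Y)|\,S_{\nu\mid N}(Y)$ equals, up to the sign $(-1)^{N(N-1)/2}$, the alternant $\det[\,y_i^{\la_j}\,]$ with $\la_j=\nu_j+N-j$. Inserting \eqref{eq2.A} and expanding the alternant as a signed sum over permutations, within each term the symmetric factor $|Y|=\prod_i|y_i|$ combines coordinatewise. The decisive structural point is that the half-open conventions built into \eqref{eq2.D} make the intervals $I(x_1,x_2),\dots,I(x_N,x_{N+1})$ pairwise disjoint and ordered from left to right; consequently a tuple with $y_i\in I(x_i,x_{i+1})$ chosen independently is automatically a strictly increasing configuration with $Y\prec X$, and every such configuration arises this way. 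This decouples the sum over interlacing $Y$ into a product of one-dimensional sums, reducing everything to the evaluation of $T(a,b;m):=\sum_{y\in I(a,b)}|y|\,y^{m}$ for $m\ge0$.

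The computational heart is the uniform closed form
\[
T(a,b;m)=\frac{b^{m+1}-a^{m+1}}{1-q^{m+1}},
\]
valid in all three cases of \eqref{eq2.D}: for same-sign $a,b$ it is a finite geometric sum, while for $a<0<b$ it splits into two infinite geometric series that converge precisely because $m+1\ge1$ and recombine into the same expression. Substituting $T$ with $m=\la_{\sigma(i)}$ and setting $\mu_j=\la_j+1$, the permutation sum becomes $\bigl(\prod_{j}(1-q^{\mu_j})\bigr)^{-1}\det[\,x_{i+1}^{\mu_j}-x_i^{\mu_j}\,]_{i,j=1}^{N}$. The determinant of these consecutive differences equals, by the standard device of bordering with a column of ones and subtracting adjacent rows, a signed $(N+1)\times(N+1)$ alternant in $x_1,\dots,x_{N+1}$ whose exponent set is $\{0,\mu_1,\dots,\mu_N\}=\{\nu_j+(N+1)-j:1\le j\le N+1\}$; this is exactly the alternant producing $S_{\nu\mid N+1}(X)$.

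It then remains to match the scalar prefactors, and this is where I expect the real bookkeeping to lie. Collecting the sign $(-1)^{N(N-1)/2}$ above, the sign $(-1)^{N}$ from reordering the bordered determinant, and the sign in $|V(X)|=(-1)^{(N+1)N/2}V(X)$, one checks the total exponent is $N(N+1)$, so all signs cancel. For the $q$-factors I would invoke the principal specialization $S_{\nu\mid M}(1,q,\dots,q^{M-1})=\prod_{1\le i<j\le M}(q^{\la_j}-q^{\la_i})/(q^{\de_j}-q^{\de_i})$ with $\la_j=\nu_j+M-j$ and $\de_j=M-j$; comparing $M=N$ with $M=N+1$ (using $\mu_j=\la_j+1$ at level $N$) shows that $S_{\nu\mid N+1}(1,\dots,q^{N})/S_{\nu\mid N}(1,\dots,q^{N-1})=\prod_j(1-q^{\mu_j})/(q;q)_N$. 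Combined with the prefactor $(q;q)_N$ from \eqref{eq2.A} and the factor $\prod_j(1-q^{\mu_j})^{-1}$ produced by the sums, this collapses to $1$, leaving exactly $\wt S_{\nu\mid N+1}(X)$. The main obstacle is thus not any isolated step but the careful verification that these $q$-Pochhammer and sign factors conspire to cancel; the one genuine analytic point, convergence of the infinite sums in the sign-changing regime, is controlled throughout by the support boundedness of Proposition \ref{prop2.C}.
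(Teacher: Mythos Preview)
Your proof is correct. The reduction to one step is routine, and the one-step computation is carried out cleanly: the half-open conventions in \eqref{eq2.D} do make the intervals $I(x_i,x_{i+1})$ disjoint and ordered, so the interlacing sum factors; the closed form $T(a,b;m)=(b^{m+1}-a^{m+1})/(1-q^{m+1})$ is valid in all three sign cases; the bordered-determinant identity and the ratio $S_{\nu\mid N+1}(1,\dots,q^{N})/S_{\nu\mid N}(1,\dots,q^{N-1})=\prod_{j=1}^{N}(1-q^{\mu_j})/(q;q)_N$ both check out; and the sign exponent $N(N-1)/2+N+N(N+1)/2=N(N+1)$ is even, so the signs indeed cancel.

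As for comparison: the paper does not prove Proposition~\ref{prop2.B} here at all---it is imported from \cite{GO} (and Proposition~\ref{prop2.A} is the special case $\nu=\varnothing$). What you have written is exactly the hands-on verification one would give: it is the $q$-lattice analogue of the classical computation that the Gelfand--Tsetlin branching kernel intertwines Schur polynomials, with the geometric-series evaluation of $T(a,b;m)$ replacing the elementary integral $\int_{x_i}^{x_{i+1}} t^{m}\,dt$ that appears in the continuous (``graph of spectra'') setting. Your argument has the added virtue of making the role of the measure weight $(1-q)|y|$ transparent---it is precisely what turns the interval sums into the $q$-integrals that produce the uniform formula for $T$.
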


\begin{remark}\label{rem2.A}
These relations are a kind of formula for the moments of the measure $\La^N_K(X,\,\cdot\,)$, and they characterize the kernel  $\La^N_K$ uniquely. For the graph $\GT$ there are similar relations (see \cite[(5.6)]{BO-AdvMath}), but they involve factorial Schur polynomials, not the ordinary ones. This is one of paradoxical examples when in the case of $q$-analogues the situation is simplified. 
\end{remark}

\section{Computations with the kernels $\La^N_K$}\label{sect3}

We start with the case $K=1$.

\begin{proposition}\label{prop3.A}
Let $z\in\C\setminus\R$. We have
\begin{equation}\label{eq3.A}
\sum_{y\in\bar\L}\La^N_1(X,y)\frac1{(yz^{-1};q)_N}=\prod_{x\in X}\frac1{1-x z^{-1}}, \qquad X\in\G_N.
\end{equation}
\end{proposition}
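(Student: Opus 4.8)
The plan is to reduce the identity to the moment relations of Proposition \ref{prop2.B}, specialized to $K=1$ and one-row partitions, combined with the $q$-binomial theorem. First I would record the series expansion
\begin{equation*}
\frac1{(w;q)_N}=\sum_{m\ge0}\frac{(q^N;q)_m}{(q;q)_m}\,w^m,
\end{equation*}
valid for $|w|<1$, and substitute $w=yz^{-1}$. The crucial observation is that the coefficient $(q^N;q)_m/(q;q)_m$ is exactly the principal specialization $S_{(m)\mid N}(1,q,\dots,q^{N-1})$ of the one-row Schur polynomial (the complete homogeneous symmetric polynomial $h_m$), via the standard evaluation $h_m(1,q,\dots,q^{N-1})=\binom{N+m-1}{m}_q$.

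Next I would apply Proposition \ref{prop2.B} with $K=1$. For a one-row partition $\nu=(m)$ one has $S_{(m)\mid1}(y)=y^m$ and $S_{(m)\mid1}(1)=1$, so $\wt S_{(m)\mid1}(y)=y^m$, whence
\begin{equation*}
\sum_{y\in\G_1}\La^N_1(X,y)\,y^m=\wt S_{(m)\mid N}(X)=\frac{S_{(m)\mid N}(X)}{S_{(m)\mid N}(1,q,\dots,q^{N-1})}.
\end{equation*}
Substituting the series for $1/(yz^{-1};q)_N$ and interchanging the two summations, the normalizing factors $S_{(m)\mid N}(1,q,\dots,q^{N-1})$ cancel against the coefficients $(q^N;q)_m/(q;q)_m$, leaving $\sum_{m\ge0}S_{(m)\mid N}(X)\,z^{-m}$. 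Since $S_{(m)\mid N}=h_m$, this is the generating function of the complete homogeneous symmetric polynomials, which sums to $\prod_{x\in X}(1-xz^{-1})^{-1}$, the desired right-hand side.

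The one point that genuinely needs care is the interchange of summations, since the measure $\La^N_1(X,\cdot)$ may be supported on an infinite set. Here I would invoke Proposition \ref{prop2.C}: the support lies in the bounded set $\G_1\cap[a,b]$ when $X\subset[a,b]$, so $|y|\le R:=\max(|a|,|b|)$ there. Because $0<q<1$, both $(q^N;q)_m$ and $(q;q)_m$ converge as $m\to\infty$, so $(q^N;q)_m/(q;q)_m$ is bounded in $m$; hence the double series converges absolutely whenever $|z|>R$, Fubini applies, and the identity holds on $\{|z|>R\}$.

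Finally I would extend the identity to all of $\C\setminus\R$ by analytic continuation. The right-hand side is holomorphic off the real points $x\in X$. For the left-hand side, on any compact $\mathcal K\subset\C\setminus\R$ the factor $|1-yz^{-1}q^i|$ stays bounded below uniformly in $y$ (with $|y|\le R$) and in $z\in\mathcal K$, so $|(yz^{-1};q)_N^{-1}|$ is uniformly bounded; since $\sum_y\La^N_1(X,y)=1$, the Weierstrass $M$-test gives local uniform convergence and hence holomorphy on $\C\setminus\R$. As both sides are holomorphic there and agree on the open set $\{|z|>R\}$, they agree everywhere on $\C\setminus\R$. I expect this convergence-and-continuation bookkeeping, rather than the algebra, to be the only real obstacle.
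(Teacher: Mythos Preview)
Your proposal is correct and follows essentially the same route as the paper: specialize Proposition \ref{prop2.B} to $K=1$ with $\nu=(m)$, identify the normalizing constant $h_m(1,q,\dots,q^{N-1})=(q^N;q)_m/(q;q)_m$, use the $q$-binomial theorem to sum the series, and finish by analytic continuation. Your treatment of the interchange of summations (via Proposition \ref{prop2.C}) and of the analytic continuation is slightly more explicit than the paper's, but the argument is the same.
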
 

\begin{proof}
For $K=1$, formula \eqref{eq2.B1} takes the form
\begin{equation}\label{eq3.B}
\sum_{y\in\L}\La^N_1(X,y) y^n=\frac{h_n(X)}{h_n(1,q,\dots,q^{N-1})}, \qquad X\in\G_N, \quad n=0,1,2,\dots\,.
\end{equation}
Observe that
$$
h_n(1,q,\dots,q^{N-1})=\frac{(q^N;q)_n}{(q;q)_n}
$$
(see Macdonald \cite[Chapter 1, Section 2, Ex. 3]{M}) and rewrite \eqref{eq3.B} as
\begin{equation*}
\frac{(q^N;q)_n}{(q;q)_n}\sum_{y\in\L}\La^N_1(X,y) y^n=h_n(X), \qquad n=0,1,2,\dots\,.
\end{equation*}
Assume first that $|z|$ is large, multiply the both sides by $z^{-n}$ and then sum over $n=0,1,2,\dots$\,. In the right-hand side we get the desired expression. 

Next, in the left-hand side we may interchange the order of summation. Then the interior sum will be
$$
\sum_{n=0}^\infty y^nz^{-n}\frac{(q^N;q)_n}{(q;q)_n}=\frac{(yz^{-1}q^N;q)_\infty}{(yz^{-1};q)_\infty}=\frac1{(yz^{-1};q)_N},
$$
where the first equality follows from the $q$-binomial theorem (Gasper-Rahman \cite[(1.3.2)]{GR}). This gives us the desired formula.

Finally, we extend the result to arbitrary $z\in\C\setminus\R$ by analytic continuation. 
\end{proof}

Now we extend the result of Proposition \ref{prop3.A} to the case of general $K<N$.

Let $Z=(z_1,\dots,z_K)\in(\C\setminus\R)^K$  be a $K$-tuple of pairwise distinct numbers.   We define a function $f_{Z,N,K}$ on $\G_K$ by 
\begin{equation}\label{eq3.C}
f_{Z,N,K}(Y):=\dfrac{\det\left[\dfrac1{(y_iz_j^{-1};q)_{N-K+1}}\right]_{i,j=1}^K}{V(Y)V(Z^{-1})},
\end{equation}
where 
$$
Z^{-1}:=(z_1^{-1},\dots,z_K^{-1}).
$$
Note that the right-hand side of \eqref{eq3.C} does not depend on the numeration of the points in $Y$ and $Z$. 

The assumption that the parameters $z_1,\dots,z_K$ are not real is introduced in order to avoid vanishing of the denominators in \eqref{eq3.C}. As for the assumption that the parameters are pairwise distinct, it is actually redundant and adopted for simplicity only. 

\begin{proposition}\label{prop3.B} 
Let $Z$ be as above. For $X\in\G_N$ we have
\begin{equation}\label{eq3.D}
\La^N_K f_{Z,N,K}(X)=\prod _{i=1}^K\frac{(q;q)_{N-i}}{(q;q)_{N-K}(q;q)_{K-i}}\cdot \prod_{x\in X}\prod_{j=1}^K\frac1{1-x z_j^{-1}}.
\end{equation}
\end{proposition}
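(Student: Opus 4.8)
The plan is to reduce everything to Proposition \ref{prop2.B} by expanding $f_{Z,N,K}$ into a series of normalized Schur polynomials $\wt S_{\nu\mid K}$, applying \eqref{eq2.B1} term by term, and then resumming at the $N$-level. Throughout I would first assume that $|z_1|,\dots,|z_K|$ are large enough to guarantee absolute convergence of all the series and to justify the interchanges of summation; the general case $z_j\in\C\setminus\R$ then follows by analytic continuation, exactly as in the final step of the proof of Proposition \ref{prop3.A}.

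First I would expand each entry of the determinant in \eqref{eq3.C}. By the $q$-binomial theorem,
\begin{equation*}
\frac1{(y_iz_j^{-1};q)_{N-K+1}}=\sum_{n\ge0}\frac{(q^{N-K+1};q)_n}{(q;q)_n}\,y_i^n z_j^{-n},
\end{equation*}
so the numerator determinant becomes the determinant of a product of the $K\times\infty$ matrix $[y_i^n]$ and the $\infty\times K$ matrix with entries $\frac{(q^{N-K+1};q)_n}{(q;q)_n}z_j^{-n}$. Applying the Cauchy--Binet formula, the numerator expands as a sum over strictly increasing exponent sequences $n_1<\dots<n_K$, i.e.\ over partitions $\nu$ with $\ell(\nu)\le K$ via $n_l=\nu_{K+1-l}+l-1$, of a minor $\det[y_i^{\nu_m+K-m}]$ times the complementary minor in the $z$-variables. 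Each such pair of minors equals, up to one common sign whose product is $+1$, $V(Y)\,S_{\nu\mid K}(Y)$ and $V(Z^{-1})\,S_{\nu\mid K}(Z^{-1})$ respectively (the bialternant formula), while the row factors $\frac{(q^{N-K+1};q)_{n_l}}{(q;q)_{n_l}}$ pull out. The two Vandermondes cancel the denominator of \eqref{eq3.C}, leaving the clean expansion
\begin{equation*}
f_{Z,N,K}(Y)=\sum_{\nu:\,\ell(\nu)\le K}\Bigl(\prod_{m=1}^K\frac{(q^{N-K+1};q)_{\nu_m+K-m}}{(q;q)_{\nu_m+K-m}}\Bigr)\,S_{\nu\mid K}(Z^{-1})\,S_{\nu\mid K}(Y).
\end{equation*}

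Next I would pass to normalized Schur polynomials via $S_{\nu\mid K}(Y)=S_{\nu\mid K}(1,\dots,q^{K-1})\,\wt S_{\nu\mid K}(Y)$ and apply Proposition \ref{prop2.B} to each term, which replaces $\wt S_{\nu\mid K}$ by $\wt S_{\nu\mid N}$ and hence $S_{\nu\mid K}(Y)$ by $\frac{S_{\nu\mid K}(1,\dots,q^{K-1})}{S_{\nu\mid N}(1,\dots,q^{N-1})}S_{\nu\mid N}(X)$. On the other hand, by the Cauchy identity the target product equals $\sum_{\nu:\,\ell(\nu)\le K}S_{\nu\mid N}(X)\,S_{\nu\mid K}(Z^{-1})$. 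Since the $S_{\nu\mid N}$ are linearly independent and both sides are now expanded along them with coefficients proportional to $S_{\nu\mid K}(Z^{-1})$, the proposition reduces to the single scalar identity
\begin{equation*}
\prod_{m=1}^K\frac{(q^{N-K+1};q)_{\nu_m+K-m}}{(q;q)_{\nu_m+K-m}}\cdot\frac{S_{\nu\mid K}(1,\dots,q^{K-1})}{S_{\nu\mid N}(1,\dots,q^{N-1})}=\prod_{i=1}^K\frac{(q;q)_{N-i}}{(q;q)_{N-K}(q;q)_{K-i}},
\end{equation*}
which must hold for \emph{every} $\nu$ with $\ell(\nu)\le K$; in particular its left-hand side must be independent of $\nu$.

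The hard part will be this last identity, since a priori its left-hand side depends heavily on $\nu$. The cleanest route I see is the $q$-analogue of the hook--content formula for the principal specialization, which gives $\frac{S_{\nu\mid K}(1,\dots,q^{K-1})}{S_{\nu\mid N}(1,\dots,q^{N-1})}=\prod_{(i,j)\in\nu}\frac{1-q^{K+c(i,j)}}{1-q^{N+c(i,j)}}$, where $c(i,j)=j-i$ is the content; here the prefactor $q^{n(\nu)}$ and all hook-length factors cancel between numerator and denominator. Rewriting the first product via $\frac{(q^{N-K+1};q)_{\nu_m+K-m}}{(q;q)_{\nu_m+K-m}}=\frac{(q^{N-K+1};q)_{K-m}}{(q;q)_{K-m}}\cdot\frac{(q^{N-m+1};q)_{\nu_m}}{(q^{K-m+1};q)_{\nu_m}}$ and reading the last factor cell by cell along the $m$-th row (where $N-m+t=N+c$ and $K-m+t=K+c$), the $\nu$-dependent part telescopes against $\prod_{(i,j)\in\nu}\frac{1-q^{K+c}}{1-q^{N+c}}$ and cancels completely, while the $\nu$-independent part $\prod_{m}\frac{(q^{N-K+1};q)_{K-m}}{(q;q)_{K-m}}$ is readily seen to equal the claimed constant upon using $\frac{(q;q)_{N-i}}{(q;q)_{N-K}}=(q^{N-K+1};q)_{K-i}$. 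This disposes of the identity, and with the convergence and analytic-continuation remarks above, of the proposition.
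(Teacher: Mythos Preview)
Your proof is correct and is essentially the paper's own argument run in reverse: the paper starts from Proposition~\ref{prop2.B}, multiplies by $S_{\nu\mid K}(Z^{-1})$, sums over $\nu$ (Cauchy on the right), and then uses the multiplicativity of the principal-specialization ratio together with Cauchy--Binet and the $q$-binomial theorem to collapse the left-hand side to $f_{Z,N,K}$, whereas you expand $f_{Z,N,K}$ first via Cauchy--Binet and the $q$-binomial theorem, apply Proposition~\ref{prop2.B} termwise, and match against the Cauchy expansion of the target. The scalar identity you isolate is exactly the paper's formula for $S_{\nu\mid N}(1,\dots,q^{N-1})/S_{\nu\mid K}(1,\dots,q^{K-1})$ (quoted from Macdonald there, obtained from hook--content by you); the appeal to linear independence of the $S_{\nu\mid N}$ is harmless but unnecessary, since proving the identity for every $\nu$ already gives termwise equality of the two series.
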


In the case $K=1$ this formula reduces to that of Proposition \ref{prop3.A}. 

\begin{proof}
We first perform some formal transformations and then justify them.

By virtue of Proposition \ref{prop2.B}, we have for $X\in\G_K$
$$
\sum_{Y\in\G_K}\La^N_K(X,Y) \frac{S_{\nu\mid N}(1,q,\dots,q^{N-1})}{S_{\nu\mid K}(1,q,\dots,q^{K-1})} S_{\nu\mid K}(Y)=S_{\nu\mid N}(X).
$$
Let us multiply both sides of this equality by 
$S_{\nu\mid K}(Z^{-1})$
and then take the sum over all partitions $\nu$ with $\ell(\nu)\le K$. Then we get
\begin{multline}\label{eq3.E}
\sum_{\nu:\, \ell(\nu)\le K}\sum_{Y\in\G_K}\La^N_K(X,Y) \frac{S_{\nu\mid N}(1,q,\dots,q^{N-1})}{S_{\nu\mid K}(1,q,\dots,q^{K-1})} S_{\nu\mid K}(Y)S_{\nu\mid K}(Z^{-1})\\
=\sum_{\nu:\, \ell(\nu)\le K}S_{\nu\mid N}(X)S_{\nu\mid K}(Z^{-1}).
\end{multline}

We are going to show that \eqref{eq3.E} is equivalent to \eqref{eq3.D}. The right-hand side of \eqref{eq3.E} equals
$$
\prod_{x\in X}\prod_{j=1}^K\frac1{1-x z_j^{-1}},
$$
which agrees with the right-hand side of \eqref{eq3.D}, up to a numerical factor (the product over $i$ in \eqref{eq3.D}). 

Let us examine now the left-hand side of \eqref{eq3.E}. We interchange the order of summation, which gives 
\begin{equation}\label{eq3.F}
\sum_{Y\in\G_K}\La^N_K(X,Y) \sum_{\nu:\, \ell(\nu)\le K}\frac{S_{\nu\mid N}(1,q,\dots,q^{N-1})}{S_{\nu\mid K}(1,q,\dots,q^{K-1})} S_{\nu\mid K}(Y)S_{\nu\mid K}(Z^{-1}). 
\end{equation}
The key observation is that the ratio entering this formula is a multiplicative expression in the coordinates
$$
n_i:=\nu_i+K-i, \qquad i=1,\dots,K.
$$
Indeed, using a well-known formula for the evaluation of a Schur polynomial at a geometric progression (see Macdonald \cite[Ch. I. Section 3, Ex. 1]{M}) we get
$$
\frac{S_{\nu\mid N}(1,q,\dots,q^{N-1})}{S_{\nu\mid K}(1,q,\dots,q^{K-1})}=\prod_{i=1}^K\frac{(q;q)_{K-i}(q;q)_{N-K}}{(q;q)_{N-i}}\cdot \prod_{i=1}^K\frac{(q^{N-K+1};q)_{n_i}}{(q;q)_{n_i}}.
$$

This enables us to write the interior sum in \eqref{eq3.F} in the form
\begin{equation}\label{eq3.G}
\prod_{i=1}^K\frac{(q;q)_{K-i}(q;q)_{N-K}}{(q;q)_{N-i}}\cdot\frac1{V(Y)V(Z^{-1})}\sum_{n_1>\dots>n_K\ge0}\det[f_{n_r}(y_i)]\det[g_{n_r}(z_j)],
\end{equation}
where
$$
f_n(y):=y^n, \qquad g_n(z):=z^{-n}\frac{(q^{N-K+1};q)_n}{(q;q)_n}.
$$

Next we apply a well-known identity (all determinants are of order $K$)
$$
\sum_{n_1>\dots>n_K\ge0}\det[f_{n_r}(y_i)]\det[g_{n_r}(z_j)]=\det[h(i,j)],
$$
where
$$
h(i,j)=\sum_{n=0}^\infty f_n(y_i)g_n(z_j).
$$

In our concrete situation the last sum can be computed explicitly:
$$
h(i,j)=\sum_{n=0}^\infty y_i^nz_j^{-n} \frac{(q^{N-K+1};q)_n}{(q;q)_n}=\frac{(y_iz_j^{-1}q^{N-K+1};q)_\infty}{(y_iz_j;q)_\infty}=\frac1{(y_iz_j;q)_{N-K+1}},
$$
where the second equality follows from the $q$-binomial theorem (Gasper-Rahman \cite[(1.3.2)]{GR}). This implies that \eqref{eq3.G} can be rewritten as
$$
\prod_{i=1}^K\frac{(q;q)_{K-i}(q;q)_{N-K}}{(q;q)_{N-i}}f_{Z\mid N,K}(Y)
$$
and hence \eqref{eq3.F} (which is the left-hand side of \eqref{eq3.E}) takes the form
\begin{equation}\label{eq3.H}
\prod_{i=1}^K\frac{(q;q)_{K-i}(q;q)_{N-K}}{(q;q)_{N-i}}\sum_{Y\in\G_K}\La^N_K(X,Y)f_{Z\mid N,K}(Y).
\end{equation}

Thus, we see that the desired equality \eqref{eq3.D} is obtained  by multiplying the both sides of \eqref{eq3.E} by the numeric factor  
$$
\prod_{i=1}^K\frac{(q;q)_{N-i}}{(q;q)_{K-i}(q;q)_{N-K}}.
$$
Then it will appear in the right-hand side and will be cancelled in the left-hand side by the pre-factor from \eqref{eq3.H}. 

 To justify the above transformations we observe that all these series absolutely converge for small $z_1^{-1},\dots,z_K^{-1}$, and in the very end we may apply analytic continuation. The situation is the same as in the case when we want to compute the Stieltjes transform of a compactly supported measure with known moments: the Stieltjes kernel is a generating series for the moments, which has a finite radius of convergence, but then we may apply analytic continuation. 
\end{proof}

We are going to extract from formula \eqref{eq3.D} an explicit expression for $\La^N_K(X,Y)$ written in terms of a contour integral representation. To make the arguments clearer we examine first the simplest case $K=1$. 

For every $y\in\L$ we fix a vertical line in the complex plane $\C$, separating the points $y$ and $yq$, and oriented from top to bottom; let us denote it by $C(y)$. In other words, $C(y)=(a+\sqrt{-1}\infty, a-\sqrt{-1}\infty)$, where $a\in\R$ is chosen arbitrarily inside the interval between $y$ and $yq$.  

\begin{proposition}\label{prop3.C}
Let $X\in\G_N$ and $y\in\L${\rm;} then
\begin{equation}\label{eq3.K}
\La^N_1(X,y)=\frac{(1-q^{N-1})|y|}{2\pi\sqrt{-1}}\int_{z\in C(y)}(yz^{-1}q;q)_{N-2}\prod_{x\in X}\frac1{1-xz^{-1}}\frac{dz}{z^2}.
\end{equation}
\end{proposition}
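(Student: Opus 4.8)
The plan is to read \eqref{eq3.K} as an \emph{inversion} of the generating-function identity \eqref{eq3.A} of Proposition \ref{prop3.A}. Denote by $\Phi(X,y)$ the right-hand side of \eqref{eq3.K}. Since the contour $C(y)$ lies in $\C\setminus\R$ except for its single real crossing point, and that point can be chosen to avoid all $x\in X$ (the open interval between $yq$ and $y$ contains no point of $\L$ at all), the product $\prod_{x\in X}(1-xz^{-1})^{-1}$ is holomorphic along $C(y)$, so by analytic continuation the expansion of \eqref{eq3.A}, namely $\prod_{x\in X}(1-xz^{-1})^{-1}=\sum_{y'\in\L}\La^N_1(X,y')(y'z^{-1};q)_N^{-1}$, is valid on the contour. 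After interchanging summation and integration this reduces the claim to the \emph{biorthogonality relation}
$$
D(y,y'):=\frac{(1-q^{N-1})|y|}{2\pi\sqrt{-1}}\int_{z\in C(y)}\frac{(yz^{-1}q;q)_{N-2}}{(y'z^{-1};q)_N}\,\frac{dz}{z^2}=\delta_{y,y'},\qquad y,y'\in\L,
$$
because then $\Phi(X,y)=\sum_{y'}\La^N_1(X,y')D(y,y')=\La^N_1(X,y)$.

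To evaluate $D(y,y')$ I would first clear the $q$-Pochhammer symbols. Writing $1-cz^{-1}=(z-c)/z$ one checks that the integrand, \emph{including} the factor $z^{-2}$, collapses to the rational function $R(z)=\dfrac{\prod_{j=1}^{N-2}(z-yq^j)}{\prod_{i=0}^{N-1}(z-y'q^i)}$, whose numerator has degree $N-2$ and denominator degree $N$, so that $R(z)=O(z^{-2})$ as $z\to\infty$. Hence the contributions of large semicircles vanish and $\int_{C(y)}R(z)\,dz$ equals $2\pi\sqrt{-1}$ times the sum of residues to the right of the vertical line $C(y)$ (the sign being fixed by the top-to-bottom orientation). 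The decisive point is that the numerator zeros $z=yq^{j}$, $1\le j\le N-2$, cancel exactly those denominator poles $z=y'q^i$ whose $q$-exponent falls in $\{1,\dots,N-2\}$; only the ``boundary'' poles survive.

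Carrying this out for $y>0$: when $y'=y$ precisely two poles survive, at $z=y$ and $z=yq^{N-1}$, and they lie on opposite sides of $C(y)$ (which separates $y$ from $yq$); the residue at $z=y$ equals $1/(y(1-q^{N-1}))$, and together with the prefactor this gives $D(y,y)=1$. When $y'\neq y$ the surviving poles all lie strictly on one side of $C(y)$: writing $y'=yq^{d}$ on the same ray, they sit to the left if $d\ge1$ and to the right if $d\le-1$, while if $y'$ lies on the opposite ray they are all negative, hence to the left; closing the contour on the empty side yields $D(y,y')=0$. The case $y<0$ follows from the reflection symmetry about $0$ combined with the switch $(\zeta_+,\zeta_-)\to(-\zeta_-,-\zeta_+)$ noted after \eqref{eq2.D}, with the factor $|y|$ absorbing the sign.

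The step I expect to be the main obstacle is the justification of the termwise integration, since the support of $\La^N_1(X,\,\cdot\,)$ is infinite whenever $X$ contains points of both signs. Here I would invoke Proposition \ref{prop2.C}: the support is bounded, so the poles $y'q^i$ remain in a fixed bounded set and stay at a positive distance from the line $C(y)$ (which is itself bounded away from $0$). Consequently each summand $(y'z^{-1};q)_N^{-1}$ is bounded uniformly in $y'$ along $C(y)$, while the kernel $(yz^{-1}q;q)_{N-2}\,z^{-2}$ is absolutely integrable by its $O(z^{-2})$ decay; since $\sum_{y'}\La^N_1(X,y')=1$, dominated convergence legitimizes the interchange. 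The residue bookkeeping is then routine.
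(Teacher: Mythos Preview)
Your proposal is correct and follows essentially the same route as the paper: the paper isolates the biorthogonality relation $D(y,y')=\delta_{y,y'}$ as a separate Lemma \ref{lemma3.A}, proves it by the same residue analysis (rewriting the integrand as $\prod_{j=1}^{N-2}(z-yq^j)/\prod_{i=0}^{N-1}(z-y'q^i)$ and closing the contour on the pole-free side), and then derives Proposition \ref{prop3.C} by integrating \eqref{eq3.A} against the kernel and interchanging sum and integral via the uniform $O(|z|^{-2})$ bound coming from the compact support of $\La^N_1(X,\,\cdot\,)$. Your treatment is the same argument with the lemma inlined.
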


The proof relies on the following lemma.

\begin{lemma}\label{lemma3.A}
Let $u$ and $y$ be two points from $\L$ and $N=2,3\dots$\,. Then 
$$
\frac{(1-q^{N-1})|y|}{2\pi\sqrt{-1}}\int_{z\in C(y)}\frac{(yz^{-1}q;q)_{N-2}}{(uz^{-1};q)_N}\frac{dz}{z^2}=\begin{cases} 1, & u=y,\\
0, & u\ne y. \end{cases}
$$
\end{lemma}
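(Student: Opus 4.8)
The plan is to evaluate the contour integral in Lemma~\ref{lemma3.A} by residues. The integrand is
$$
\frac{(yz^{-1}q;q)_{N-2}}{(uz^{-1};q)_N}\frac{1}{z^2},
$$
and I would close the vertical contour $C(y)$ to the right (i.e.\ in the half-plane $\mathrm{Re}\,z>a$, where $a$ lies between $yq$ and $y$), so that the integral equals the sum of residues at the poles lying to the right of $C(y)$. The poles come from the denominator $(uz^{-1};q)_N=\prod_{i=0}^{N-1}(1-uz^{-1}q^i)$, so they sit at $z=uq^i$ for $i=0,1,\dots,N-1$. The factor $z^{-2}$ contributes no pole (the potential pole at $z=0$ is not enclosed when closing to the right, or is cancelled by the decay of the integrand), and one must check that the integrand decays fast enough at infinity for the arc contribution to vanish; since the numerator is a polynomial of degree $N-2$ in $z^{-1}$ while the denominator carries degree $N$ in $z^{-1}$ together with the explicit $z^{-2}$, the integrand is $O(z^{-2})$ at infinity, which guarantees the arc contribution vanishes.

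Next I would identify which of the poles $z=uq^i$ actually lie to the right of $C(y)$. Here is where the two-sided $q$-lattice structure enters: the points of $\L$ on each ray are geometric progressions with ratio $q$, and the contour $C(y)$ separates $y$ from $yq$. I would argue that, among the candidate poles $uq^i$, the relevant ones are those exceeding $a$, and then show that the numerator factor $(yz^{-1}q;q)_{N-2}=\prod_{k=1}^{N-2}(1-yz^{-1}q^k)$ vanishes at precisely the poles $z=yq^k$ for $k=1,\dots,N-2$, killing all but at most one residue. When $u=y$, the surviving residue is the one at $z=y$ (the $i=0$ pole), and a direct computation of that residue, together with the prefactor $(1-q^{N-1})|y|/(2\pi\sqrt{-1})$ and the orientation-induced sign, should give exactly $1$. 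When $u\neq y$, I expect the numerator to annihilate every enclosed pole, yielding $0$; this is the delicate bookkeeping step.

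The main obstacle, and the step deserving the most care, is the case $u\neq y$ together with the sign and orientation conventions. The subtlety is that $u$ and $y$ range over the full two-sided lattice $\L=\L^+\cup\L^-$, so they may have the same sign or opposite signs, and the relative position of the progressions $\{uq^i\}$ and the separating line $a$ depends on these signs and on whether $|u|<|y|$ or $|u|>|y|$. I would organize the argument by first treating $u,y$ on the same ray (where the comparison reduces to comparing exponents in a single geometric progression), checking that whenever $u\neq y$ every enclosed pole $uq^i$ coincides with some zero $yq^k$ of the numerator or else lies on the wrong side of $C(y)$, and then handling opposite-sign $u,y$ separately using the same residue cancellation. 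The clockwise orientation (top to bottom) contributes a factor of $-2\pi\sqrt{-1}$ relative to the counterclockwise convention, and tracking this against the explicit prefactor is what pins down the normalization to $1$ rather than $-1$; I would verify the normalization on the clean case $u=y$, $N=2$ first, where the numerator $(yz^{-1}q;q)_0=1$ and only the single pole at $z=y$ survives.

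Once Lemma~\ref{lemma3.A} is established, Proposition~\ref{prop3.C} follows by noting that the right-hand side of \eqref{eq3.K}, when paired against the kernel $(yz^{-1};q)_N^{-1}$ and summed, reproduces the identity of Proposition~\ref{prop3.A}: the lemma says that the integral transform with kernel $(yz^{-1}q;q)_{N-2}$ is inverse to multiplication by $(uz^{-1};q)_N^{-1}$ on $\L$, so that applying it to $\prod_{x\in X}(1-xz^{-1})^{-1}$ extracts precisely the coefficients $\La^N_1(X,y)$.
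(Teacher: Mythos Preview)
Your plan is the same as the paper's --- rewrite the integrand as a rational function of $z$, note the $O(z^{-2})$ decay, close the vertical line to a bounded contour, and count residues --- but there is a real gap in the case analysis. You propose to close \emph{always to the right} and expect that when $u\ne y$ every enclosed pole $uq^i$ is killed by a numerator zero $yq^k$. That is false in the case $u>y$. Take for concreteness $y>0$ and $u=yq^{-m}$ with $m\ge1$: the poles $uq^i=yq^{i-m}$ with $i-m\le 0$ all lie to the right of $C(y)$, and none of them coincides with a numerator zero (those sit at $yq^1,\dots,yq^{N-2}$, all to the \emph{left} of $C(y)$). So closing to the right leaves you with $m{+}1$ genuine residues to sum; they do cancel, but not term by term, and proving the cancellation directly is unpleasant. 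The same obstruction appears when $y<0<u$.

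The paper's fix is exactly the flexibility you did not exploit: because the residue at infinity vanishes, one may close to the \emph{left} instead, and for $u>y$ that is the convenient choice. With $y>0$ and $u>y$, the only poles to the left of $C(y)$ are $yq^{i-m}$ with $i-m\ge1$, and since $m\ge1$ these exponents lie in $\{1,\dots,N-2\}$, so each is annihilated by a numerator zero; hence the left closure gives $0$ immediately. Symmetrically, for $u<y$ one closes to the right and finds either no enclosed poles (when $y>0$) or only poles cancelled by the numerator (when $y<0$). The case $u=y$ is then a one-pole computation. So your outline is sound, but you need to allow yourself to pick the side of closure depending on the sign of $u-y$; with that adjustment your argument becomes exactly the paper's.
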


\begin{proof}[Proof of the lemma]
For large $|z|$ the integrand is $O(|z|^{-2})$. It follows that the integral is absolutely convergent. Moreover, we may replace $C(y)$ by any of the two closed contours $C^+(y,R)$, $C^-(y,R)$ which are defined as follows. We take a large number $R>0$; start at the point $a+\sqrt{-1}R$ (where $a\in\R$ is as above); go along the vertical line $\Re z=a$ till $a-\sqrt{-1}R$, then return to $a+\sqrt{-1}R$ along one of the semicircles $\{z: |z-a|=R, \,\pm(\Re z-a)\ge0\}$. 
The two closed contours will produce the same result because the residue of the integrand at infinity equals 0.

Let us verify the claim of the lemma for $y>0$ (for $y<0$ the argument is exactly the same). We examine separately  the three possible variants: $u<y$, $u>y$, and $u=y$. For more evidence we rewrite the integrand in the form
$$
\frac{(z-yq)\dots(z-yq^{N-2})}{(z-u)\dots(z-uq^{N-1})}dz.
$$
It follows that the only possible singularities are simple poles at the points $z=u, uq,\dots,uq^{N-1}$, but it may happen that some of them are annihilated  by zeros in the numerator. 

If $u<y$, then either $u=yq^m$ with $m=1,2,\dots$ or $u<0$. In both cases there are no singularities to the right of the point $yq$, so that integration over the contour $C^+(y,R)$ gives $0$. 

If $u>y$, then $u=yq^{-m}$ with $m=1,2,\dots$\,. In this case it is convenient to take the contour $C^-(y,R)$ because the integrand has no singularities to the left of the point $y$: indeed, the possible zeros of the denominator to the left of $y$ are cancelled by zeros of the numerator. 

Finally, if $u=y$, then the integrand is equal to
$$
\frac1{(z-y)(z-yq^{N-1})}.
$$
Take, for instance, the contour $C^+(y,R)$. It goes around the pole at $z=y$ in the positive direction, the pole at $z=yq^{N-1}$ lies outside, and the residue at $z=y$ equals $(1-q^{N-1})^{-1}y^{-1}$. Taking into account the prefactor $(1-q^{N-1})y$, we get the desired result. 
\end{proof}

\begin{proof}[Proof of Proposition \ref{prop3.C}]
Rewrite \eqref{eq3.A} by replacing $y$ with $u$:
$$
\sum_{u\in\L}\La^N_1(X,u)\frac1{(uz^{-1};q)_N}=\prod_{x\in X}\frac1{1-x z^{-1}}, \qquad X\in\G_N.
$$
Now multiply both sides by $(1-q^{N-1})|y|(yz^{-1}q;q)_{N-2} z^{-2}$ and integrate  over the contour $C(y)$. By virtue of Lemma \ref{lemma3.A} this gives the desired result.

Note that in the left-hand side we have to justify the interchange of summation over $u\in\L$ and integration over $z\in C(y)$, but this is easy, because the integrand can be estimated as $O(|z|^{-2})$ uniformly on $u$: here we use the fact that the measure $\La^N(X,\,\cdot\,)$ is compactly supported. 
\end{proof}

\begin{proposition}\label{prop3.D}
Formula \eqref{eq3.K} of Proposition \ref{prop3.C} can be written in the following alternate form {\rm(}as before, $X\in\G_N$ and $y\in\L${\rm):}
\begin{equation}\label{eq3.N}
\La^N_1(X,y)=\sgn(y)(1-q^{N-1})|y|\sum_{x\in X(y)}\frac{(x-yq)\dots(x-yq^{N-2})}{\prod\limits_{x'\in X\setminus\{x\}}(x-x')},
\end{equation}
where  
\begin{equation}\label{eq3.P}
X(y):=\begin{cases} \{x\in X: x\ge y\} & \text{if $y>0$,}\\   \{x\in X: x\le y\} & \text{if $y<0$.} \end{cases}
\end{equation}
\end{proposition}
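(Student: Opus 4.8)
The plan is to evaluate the contour integral \eqref{eq3.K} by residues, just as in the proof of Lemma \ref{lemma3.A}, but now collecting the contributions of \emph{all} the poles coming from the points of $X$. First I would put the integrand into rational form. Using $1-xz^{-1}=(z-x)/z$, the identity $(yz^{-1}q;q)_{N-2}=\prod_{j=1}^{N-2}(1-yq^jz^{-1})$, and $|X|=N$, one checks that
$$
(yz^{-1}q;q)_{N-2}\prod_{x\in X}\frac1{1-xz^{-1}}\,\frac{dz}{z^2}=\frac{(z-yq)\dots(z-yq^{N-2})}{\prod_{x\in X}(z-x)}\,dz,
$$
a rational $1$-form whose only singularities are simple poles at the points $z=x$, $x\in X$. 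A pole is annihilated precisely when $x=yq^j$ for some $1\le j\le N-2$, but in that case the corresponding summand in \eqref{eq3.N} also vanishes, so no special treatment is needed. Since the numerator has degree $N-2$ and the denominator degree $N$, the form is $O(|z|^{-2})$ and its residue at infinity vanishes; this both guarantees absolute convergence and, as in Lemma \ref{lemma3.A}, allows $C(y)$ to be closed on either side without changing the value.

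Next I would treat the cases $y>0$ and $y<0$ separately, since the lattice sits differently around $y$ and the closing direction differs. Fix $a\in\R$ strictly between $yq$ and $y$, defining the downward-oriented line $C(y)$. For $y>0$ we have $0<yq<a<y$, and I would close $C(y)$ to the right by $C^+(y,R)$. The enclosed poles are the points $x\in X$ with $x>a$; because no point of $\L$ lies strictly between $yq$ and $y$ while all of $\L^-$ is negative, these are exactly the points with $x\ge y$, i.e. the set $X(y)$ of \eqref{eq3.P}. As in the proof of Lemma \ref{lemma3.A}, $C^+(y,R)$ is positively oriented, so the integral equals $2\pi\sqrt{-1}\sum_{x\in X(y)}\operatorname{Res}_{z=x}$, with $\operatorname{Res}_{z=x}=\prod_{j=1}^{N-2}(x-yq^j)\big/\prod_{x'\in X\setminus\{x\}}(x-x')$. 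Multiplying by the prefactor $(1-q^{N-1})|y|/(2\pi\sqrt{-1})=(1-q^{N-1})y/(2\pi\sqrt{-1})$ gives exactly \eqref{eq3.N} with $\sgn(y)=1$.

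For $y<0$ the picture is mirror-symmetric: now $y<a<yq<0$, the relevant poles $x<a$ are those with $x\le y$ (matching the second line of \eqref{eq3.P}), and I would close to the left via $C^-(y,R)$. Two sign changes then occur relative to the previous case and must be tracked: closing to the left reverses the orientation, producing $-2\pi\sqrt{-1}\sum\operatorname{Res}$, while $|y|=-y$ in the prefactor. These combine with $\sgn(y)=-1$ so that the overall coefficient is again $(1-q^{N-1})y=\sgn(y)(1-q^{N-1})|y|$, reproducing \eqref{eq3.N}.

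I expect the only real subtlety, rather than a genuine obstacle, to be this orientation-and-sign bookkeeping in the case $y<0$, where the flip in closing direction and the flip from $|y|$ to $y$ conspire and a sign is easy to lose, together with the verification that the enclosed poles are exactly $X(y)$ — a point that rests on the specific geometry of the two-sided $q$-lattice near $y$ (the consecutiveness of $yq$ and $y$ within $\L^{\pm}$, and the separation of $\L^+$ from $\L^-$ by $0$).
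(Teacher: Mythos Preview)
Your proposal is correct and follows exactly the approach of the paper: close the contour $C(y)$ by $C^+(y,R)$ or $C^-(y,R)$ according to the sign of $y$ and collect the residues at the enclosed points of $X$. The paper's own proof is a two-line sketch (``count the residues \dots\ after simple transformations this gives \eqref{eq3.N}''), whereas you have spelled out the rationalization of the integrand, the identification of the enclosed poles with $X(y)$, and the orientation/sign bookkeeping for $y<0$; these are precisely the ``simple transformations'' the paper leaves to the reader.
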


\begin{proof}
As in Lemma \ref{lemma3.A}, replace in \eqref{eq3.K} the contour $C(y)$ by the closed contour $C^+(y,R)$ or $C^-(y,R)$, depending on the sign of $y$, and count the residues inside the contour. After simple transformations this gives \eqref{eq3.N}.

Note also that formula \eqref{eq3.N} is symmetric with respect to the change of sign of all variables. 
\end{proof}

Now we extend the above reasoning to the case of general $K<N$. 

\begin{theorem}\label{thm3.A}
Let $X\in\G_N$ and $Y=(y_1,\dots,y_K)\in\G_K$. We have
\begin{multline}\label{eq3.I}
\La^N_K(X,Y)=V(Y)\prod_{i=1}^K\frac{(q;q)_{N-i}}{(q;q)_{K-i}(q;q)_{N-K}}\cdot\frac{(1-q^{N-K})^K |y_1|\dots |y_K|}{(2\pi\sqrt{-1})^K}\\
\times \int_{C(y_1)}\dots\int_{C(y_K)}V(Z^{-1})\prod_{j=1}^K\frac{(y_jz_j^{-1}q;q)_{N-K-1}}{\prod_{x\in X}(1-x z_j^{-1})}\,\frac{dz_1}{z_1^2}\dots \frac{dz_K}{z_K^2}.
\end{multline}
\end{theorem}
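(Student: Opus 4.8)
The plan is to invert the $K$-variable identity of Proposition \ref{prop3.B} by the same contour-integral device that carried Proposition \ref{prop3.A} into Proposition \ref{prop3.C}, now applied in all $K$ variables at once. To set this up I would relabel the summation configuration in \eqref{eq3.D} as $U=(u_1<\dots<u_K)\in\G_K$, reserving $Y=(y_1,\dots,y_K)$ for the target point at which the kernel is to be evaluated, so that \eqref{eq3.D} reads
\[
\sum_{U\in\G_K}\frac{\La^N_K(X,U)}{V(U)V(Z^{-1})}\det\left[\frac1{(u_iz_j^{-1};q)_{N-K+1}}\right]_{i,j=1}^K=\prod_{i=1}^K\frac{(q;q)_{N-i}}{(q;q)_{N-K}(q;q)_{K-i}}\prod_{x\in X}\prod_{j=1}^K\frac1{1-xz_j^{-1}}.
\]
I would then multiply both sides by $V(Z^{-1})$ together with the kernel $\dfrac{(1-q^{N-K})^K|y_1|\cdots|y_K|}{(2\pi\sqrt{-1})^K}\prod_{j=1}^K(y_jz_j^{-1}q;q)_{N-K-1}\,z_j^{-2}$, and integrate each $z_j$ over the contour $C(y_j)$. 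On the right-hand side this produces exactly the $K$-fold contour integral of \eqref{eq3.I}, up to the overall prefactor $V(Y)$, so the right side already matches the claimed formula.

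The substance of the argument lies on the left-hand side. The factor $V(Z^{-1})$ cancels the one in the denominator, and I would expand the determinant as $\sum_{\sigma\in\mathfrak S_K}\sgn(\sigma)\prod_{j=1}^K\frac1{(u_{\sigma(j)}z_j^{-1};q)_{N-K+1}}$, which decouples the $K$-fold integral into a product of single contour integrals. The $j$-th factor is
\[
\frac{(1-q^{N-K})|y_j|}{2\pi\sqrt{-1}}\int_{C(y_j)}\frac{(y_jz_j^{-1}q;q)_{N-K-1}}{(u_{\sigma(j)}z_j^{-1};q)_{N-K+1}}\,\frac{dz_j}{z_j^2}=\delta_{u_{\sigma(j)},\,y_j},
\]
by Lemma \ref{lemma3.A} applied with $N$ replaced by $N-K+1$ (legitimate since $N>K$ forces $N-K+1\ge2$). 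Hence the integrated determinant collapses to $\sum_{\sigma}\sgn(\sigma)\prod_j\delta_{u_{\sigma(j)},y_j}$, which is nonzero only when the configuration $U$ coincides with $Y$; the points $y_j$ being distinct, this in turn forces $\sigma=\mathrm{id}$, with $\sgn(\sigma)=1$. The whole sum over $U$ therefore reduces to the single surviving term $\La^N_K(X,Y)/V(Y)$. Equating the two sides and clearing $V(Y)$ gives \eqref{eq3.I}.

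The main obstacle is analytic rather than algebraic, and has two facets. First, \eqref{eq3.D} is established for non-real $Z$, whereas each contour $C(y_j)$ crosses the real axis; this is handled by reading \eqref{eq3.D} as an identity of meromorphic functions of $Z$ and continuing it to the contours, which are chosen precisely to avoid the (discrete, real) poles of both sides. Second, one must justify interchanging the summation over $U\in\G_K$ with the $K$ contour integrations before invoking Lemma \ref{lemma3.A} term by term: as in the proof of Proposition \ref{prop3.C}, the integrand decays like $O(|z_j|^{-2})$ in each variable, uniformly over $U$ in the support of $\La^N_K(X,\,\cdot\,)$, which is bounded by Proposition \ref{prop2.C}, and $\sum_U\La^N_K(X,U)=1$; together these license the interchange by dominated convergence. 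The remaining bookkeeping — the signs and the cancellation of the Vandermonde factors against the antisymmetry of the determinant — is the very mechanism already built into the definition \eqref{eq3.C} of $f_{Z,N,K}$, and is routine.
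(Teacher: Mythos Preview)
Your proposal is correct and follows essentially the same route as the paper: both start from Proposition~\ref{prop3.B} rewritten with the summation variable $U$ and the determinant of $q$-Pochhammer kernels exposed, multiply by $\prod_j(1-q^{N-K})|y_j|(y_jz_j^{-1}q;q)_{N-K-1}\,z_j^{-2}$, integrate over the contours $C(y_j)$, expand the determinant over permutations, and collapse each factor by Lemma~\ref{lemma3.A} (with $N$ replaced by $N-K+1$) to a product of Kronecker deltas; the only surviving term is $U=Y$, $\sigma=\mathrm{id}$, yielding $\La^N_K(X,Y)/V(Y)$. Your discussion of the two analytic points (continuation across the real axis, Fubini via the $O(|z_j|^{-2})$ decay and Proposition~\ref{prop2.C}) is in fact slightly more explicit than the paper's.
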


\begin{proof}
It will be convenient to assume that $y_1<\dots<y_K$ (the right-hand side of formula \eqref{eq3.I} does not depend on the enumeration of the points in $Y$). 

We start with formula \eqref{eq3.D}, which we rewrite in the following way (below we assume that $U=(u_1<\dots<u_K)$ ranges over $\G_K$)
\begin{multline}\label{eq3.J}
\sum_{U\in\G_K}\frac{\La^N_K(X,U)}{V(U)}\det\left[\dfrac1{(u_iz_j^{-1};q)_{N-K+1}}\right]_{i,j=1}^K\\
=V(Z^{-1})\prod _{i=1}^K\frac{(q;q)_{N-i}}{(q;q)_{N-K}(q;q)_{K-i}}\cdot \prod_{x\in X}\prod_{j=1}^K\frac1{1-x z_j^{-1}}.
\end{multline}

We regard this as a generating series for the quantities $\La^N_K(X,U)/V(U)$. Given $Y=(y_1<\dots<y_K)\in\G_K$, we will extract from that series the term corresponding to $U=Y$ by making use of Lemma \ref{lemma3.A}, where we replace $N$ with $N-K+1$. To do this, we multiply both sides of \eqref{eq3.J} by
\begin{equation}\label{eq3.J1}
\prod_{j=1}^K\frac{(1-q^{N-K})}{2\pi\sqrt{-1}}|y_j|(y_jz_j^{-1}q;q)_{N-K-1}\frac{dz_j}{z_j^2}
\end{equation}
and then integrate over the contours $C(y_1),\dots,C(y_K)$. 

In the right-hand side we get the desired result, only without the factor $V(Y)$. Let us examine what will happen in the left-hand side of the equality. First, we interchange summation over $U$ and integration over $z_1,\dots,z_K$. Next, it is convenient, prior to integration, to insert the $j$th factor from \eqref{eq3.J1} into the $j$th column of the matrix under the sign of determinant and then expand the determinant into a sum of $K!$ terms indexed by permutations $\si$ of the set $\{1,\dots,K\}$. By virtue of Lemma \ref{lemma3.A}, integrating the determinant gives 
$$
\sum_{\si}\operatorname{sgn}(\si)\de_{u_{\si(1)},y_1}\dots\de_{u_{\si(K)},y_K}.
$$
Since $y_1<\dots<y_K$ and $u_1<\dots<u_K$, this equals $1$ if $U=K$ and $0$ otherwise. Thus, after integration we get in the left-hand side $\La^N_K(X,Y)/V(Y)$. This proves \eqref{eq3.I}. 
\end{proof}

\begin{remark}\label{rem3.A}
The result of Theorem \ref{thm3.A} can be easily transformed into a $K\times K$ determinantal formula by writing $V(Z^{-1})$ as a determinant. Namely,
$$
\La^N_K(X,Y)=V(Y)\prod_{i=1}^K\frac{(q;q)_{N-i}}{(q;q)_{K-i}(q;q)_{N-K}}\cdot\det[A(i,j)]_{i,j=1}^K,
$$
where
$$
A(i,j)=\frac{(1-q^{N-K})|y_j|}{2\pi \sqrt{-1}}\int_{C(y_j)}\frac{z^{i-K-2}(y_jz^{-1}q;q)_{N-K-1}}{\prod_{x\in X}(1-x z^{-1})}dz, \qquad i,j=1,\dots,K.
$$
The latter expression is similar to \eqref{eq3.K} and can be written in the following alternate form, cf. Proposition \ref{prop3.D}:
\begin{equation}\label{eq3.O}
A(i,j)=\sgn(y_j)(1-q^{N-K})|y_j|\sum_{x\in X(y)}\frac{x^{i-1}(x-yq)\dots(x-yq^{N-K-1})}{\prod\limits_{x'\in X\setminus\{x\}}(x-x')},
\end{equation}
where $X(y)\subseteq X$ is defined in \eqref{eq3.P}. Such a determinantal formula was found, for the first time, in Borodin--Olshanski \cite{BO-AdvMath} in the case of the ordinary Gelfand--Tsetlin graph. Then Petrov \cite{P} proposed a different approach, which enabled him to obtain   also a $q$-analogue of the formula. In our picture, his $q$-analogue corresponds to the case when the configurations are contained entirely in $\L^+\subset\L$. Initially, I simply repeated Petrov's computations for the whole two-sided $q$-lattice $\L$ but then I saw that one can argue somewhat differently.  
\end{remark}

The next theorem will be used in Section \ref{sect9}.  It provides a formula which extends both \eqref{eq3.D} and \eqref{eq3.I}. Fix a configuration $A=(a_1,\dots,a_m)\in\G_m$, where $0\le m\le K$, and set $n:=K-m$. Let $Z=(z_1,\dots,z_n)\in (\C\setminus\R)^n$ be an $n$-tuple of pairwise distinct numbers. We define a function $f_{A\mid Z,N,K}(Y)$ on $\G_K$ as follows (cf. \eqref{eq3.C}): 

$\bullet$ If $Y$ does not contain $A$, then $f_{A\mid Z,N,K}(Y)=0$.

$\bullet$ If $Y$ contains $A$, then denote $Y\setminus A=(y_1,\dots,y_n)$ and set
\begin{equation}\label{eq3.L}
f_{A\mid Z,N,K}(Y):=\dfrac{\det\left[\dfrac1{(y_iz_j^{-1};q)_{N-K+1}}\right]_{i,j=1}^n}{V(y_1,\dots,y_n)V(z_1^{-1}, \dots,z_n^{-1})\prod_{s=1}^n\prod_{r=1}^m(y_s-a_r)}.
\end{equation}

If $m=0$, so that $A=\varnothing$, then the function $f_{A\mid Z,N,K}(Y)$ reduces to the function $f_{Z,K,N}$, which is defined above by formula \eqref{eq3.C} and whose image under $\La^N_K$ is computed in Proposition \ref{prop3.B}. If $m$ takes the maximal possible value $m=K$, then $A\in\G_K$ and $f_{A\mid Z,N,K}(Y)$ becomes the delta-function at $A$;  the image of this delta-function under $\La^N_K$ is simply the entry $\La^N_K(X,A)$ viewed as a function in variable $X$, and this quantity is computed in Proposition \ref{prop3.A}. Now we find the image of $f_{A\mid Z,N,K}$ in the general case. 

\begin{theorem}\label{thm3.B}
Recall that $f_{A\mid Z,N,K}$ denotes the function on $\G_K$ defined in \eqref{eq3.L}. For $X\in\G_N$ we have
\begin{equation}\label{eq3.M}
\begin{gathered}
\La^N_K f_{A\mid Z,N,K}(X)=V(A)\prod_{i=1}^K\frac{(q;q)_{N-i}}{(q;q)_{K-i}(q;q)_{N-K}}\\
\times\frac{(1-q^{N-K})^m |a_1|\dots |a_m|}{(2\pi\sqrt{-1})^m}
\prod_{x\in X}\prod_{s=1}^n\frac1{1-xz_s^{-1}}  \\
\times
\int_{C(a_1)}\dots\int_{C(a_m)}V(w_1^{-1},\dots,w_m^{-1})\prod_{s=1}^n\prod_{r=1}^m(z_s^{-1}-w_r^{-1})\\
\times\prod_{r=1}^m\frac{(a_rw_r^{-1}q;q)_{N-K-1}}{\prod_{x\in X}(1-x w_r^{-1})}\,\frac{dw_1}{w_1^2}\dots \frac{dw_m}{w_m^2},
\end{gathered}
\end{equation}
where variable $w_r$ ranges over the contour $C(a_r)$, $1\le r\le m$.
\end{theorem}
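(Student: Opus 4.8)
The plan is to view Theorem~\ref{thm3.B} as the common generalization of Proposition~\ref{prop3.B} (the case $m=0$) and Theorem~\ref{thm3.A} (the case $m=K$), obtained by integrating out exactly $m$ of the spectral parameters so as to force the argument $U$ of $\La^N_K$ to contain the fixed part $A$. Accordingly, I would start from the generating identity \eqref{eq3.J} used in the proof of Proposition~\ref{prop3.B}, but applied to the enlarged $K$-tuple $Z'=(z_1,\dots,z_n,w_1,\dots,w_m)$, in which $z_1,\dots,z_n$ are the free parameters destined to survive in the final formula and $w_1,\dots,w_m$ are auxiliary variables destined to be integrated against the kernels of Lemma~\ref{lemma3.A}. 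Thus the left-hand side reads $\sum_{U\in\G_K}V(U)^{-1}\La^N_K(X,U)\det[1/(u_i(z'_j)^{-1};q)_{N-K+1}]_{i,j=1}^K$, and the right-hand side is $V((Z')^{-1})$ times the constant $\prod_{i=1}^K(q;q)_{N-i}/((q;q)_{N-K}(q;q)_{K-i})$ and the product $\prod_{x\in X}\prod_{j=1}^K(1-x(z'_j)^{-1})^{-1}$.

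The extraction step is the heart of the argument. I would multiply both sides by
$$
\prod_{r=1}^m\frac{1-q^{N-K}}{2\pi\sqrt{-1}}\,|a_r|\,(a_rw_r^{-1}q;q)_{N-K-1}\,\frac{dw_r}{w_r^2}
$$
and integrate over the contours $C(a_1),\dots,C(a_m)$, exactly as in the passage from \eqref{eq3.J} to \eqref{eq3.I} in the proof of Theorem~\ref{thm3.A}. On the left, after interchanging the sum over $U$ with the integrals, I would insert the $r$-th kernel into the $(n+r)$-th column of the determinant and integrate column by column; Lemma~\ref{lemma3.A}, with $N$ replaced by $N-K+1$, collapses the entry in row $i$, column $n+r$ to $\de_{u_i,a_r}$. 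The integrated determinant then vanishes unless $U\supseteq A$, and for $U\supseteq A$ a Laplace expansion along the last $m$ columns reduces it, up to an interleaving sign $\epsi(U,A)$, to the $n\times n$ minor $\det[1/(y_iz_j^{-1};q)_{N-K+1}]_{i,j=1}^n$ in the complementary points $(y_1,\dots,y_n)=U\setminus A$. Invoking the factorization $V(U)=\epsi(U,A)\,V(A)\,V(y_1,\dots,y_n)\prod_{s=1}^n\prod_{r=1}^m(y_s-a_r)$ and comparing with the defining formula \eqref{eq3.L}, the entire left-hand side collapses to $V(Z^{-1})V(A)^{-1}\,\La^N_K f_{A\mid Z,N,K}(X)$.

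On the right-hand side the integration is transparent once one factors $V((Z')^{-1})=V(Z^{-1})\,V(w_1^{-1},\dots,w_m^{-1})\prod_{s=1}^n\prod_{r=1}^m(z_s^{-1}-w_r^{-1})$ and separates the purely $z$-dependent product $\prod_{x\in X}\prod_{s=1}^n(1-xz_s^{-1})^{-1}$ from the $w$-integral. The factor $V(Z^{-1})$ then cancels against the one produced on the left, and multiplying through by $V(A)$ yields precisely \eqref{eq3.M}, with the prefactor $V(A)\prod_i(q;q)_{N-i}/((q;q)_{K-i}(q;q)_{N-K})$ and the factor $(1-q^{N-K})^m|a_1|\cdots|a_m|/(2\pi\sqrt{-1})^m$ supplied by the $m$ kernels. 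As consistency checks, $m=0$ returns \eqref{eq3.D} and $m=K$ returns \eqref{eq3.I}, the reduced minor being empty in the latter case.

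The main obstacle I anticipate is the sign and Vandermonde bookkeeping: one must confirm that the Laplace-expansion sign $\epsi(U,A)$ is exactly the sign appearing in the factorization of $V(U)$, so that the two cancel and the clean factor $V(A)$ survives, and that the $V(Z^{-1})$ created by the column reduction on the left matches the one split off from $V((Z')^{-1})$ on the right. The remaining analytic points are routine and are handled as in Propositions~\ref{prop3.B} and~\ref{prop3.C}: all the relevant $q$-series converge for $z_j^{-1}$ and $w_r^{-1}$ small, the interchange of $\sum_U$ with the $m$ contour integrals is legitimate because $\La^N_K(X,\,\cdot\,)$ has bounded support by Proposition~\ref{prop2.C} while each integrand is uniformly $O(|w_r|^{-2})$, and the identity in full generality then follows by analytic continuation in $z_1,\dots,z_n$.
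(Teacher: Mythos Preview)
Your proposal is correct and follows essentially the same route as the paper's own proof: start from \eqref{eq3.J} with the enlarged parameter tuple $(z_1,\dots,z_n,w_1,\dots,w_m)$, multiply by the $m$ kernels from Lemma~\ref{lemma3.A}, integrate over the $w$-variables, and reduce via a Laplace expansion along the last $m$ columns. You are simply more explicit than the paper about the sign/Vandermonde bookkeeping and the analytic justification, both of which the paper subsumes under ``simple transformations.''
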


In the two extreme cases, $m=0$ and $m=K$, formula \eqref{eq3.M} reduces (within notation) to \eqref{eq3.D} and \eqref{eq3.I}, respectively.

\begin{proof}
The argument is similar to that of Theorem \ref{thm3.A}, with the only difference that we have to integrate over a part of variables. Namely, we start with equality \eqref{eq3.J}, rename the last $m$ variables $z_{n+1},\dots,z_K$ into $w_1,\dots,w_m$, then multiply both sides of \eqref{eq3.J} by 
$$
\prod_{r=1}^m\frac{(1-q^{N-K})}{2\pi\sqrt{-1}}|a_r|(a_r w_r^{-1}q;q)_{N-K-1}\frac{dw_r}{w_r^2}
$$
and integrate over the contours $C(y_1),\dots,C(y_m)$. In the left-hand side of \eqref{eq3.J}, after interchanging integration over the contours with summation over $U$, we are lead to integrating the $K\times K$ determinant that enters the left-hand side. Expanding it over the last $m$ columns we see that the result of integration depends on whether  $U$ contains $A$ or  not: in the latter case we get $0$, and in the former case we get an $n\times n$ determinant of the same form as in \eqref{eq3.L}. Then the desired formula \eqref{eq3.M} appears after simple transformations. 
\end{proof}

\section{Extension of the kernels $\La^N_K$}\label{sect4}

Set $\bar\L:=\L\cup\{0\}$; this is the closure of $\L$ in $\R$.

Recall (see Section \ref{sect2.2}) that $\G_N$ (the $N$th level of the graph $\G$) can be viewed as a subset of the cone $\De_N\subset\R^N$ and denote by $\GG_N$ the closure of $\G_N$ in $\De_N$. Elements of $\GG_N$ can be described in two equivalent ways: either as $N$-point configurations on $\bar\L$ with allowed multiple points at zero or as point configurations on $\L$ of cardinality less or equal to $N$. 

According to this definition we have a stratification 
\begin{equation*}
\GG_N=\GG_{N,0}\sqcup\GG_{N,1}\sqcup\dots\sqcup\GG_{N,N},
\end{equation*}
where $\GG_{N,n}$ is formed by the configurations of the form $X=X^\circ\cup 0^n$; here $X^\circ:=X\cap\L\in\G_{N-n}$ and the symbol $0^n$ denotes $n$ points at $0$ sticked together. 

The stratum $\GG_{N,0}$ is the set $\G_N$, and the stratum $\GG_{N,N}$ consists of a sole element, $\varnothing\cup 0^N$.  We equip $\GG_N$ with the topology induced from the ambient cone $\De_N$. Then the closure of the stratum $\GG_{N,n}$ is the union of the strata $\GG_{N,n},\dots,\GG_{N,N}$.

The space $\GG_N$ is locally compact. Given two real numbers $a<b$, we denote by $\GG_N[a,b]\subset\GG_N$ the compact subset formed by the configurations contained in the closed interval $[a,b]$. If both $a$ and $b$ are nonzero, then $\GG_N[a,b]$ is open in $\GG_N$.

In the next lemma we realize\/ $\Sym(N)$ as a subalgebra of the algebra of continuous functions on the cone $\De_N\subset\R^N$. 

\begin{lemma}\label{lemma4.A}
A measure on $\De_N$ with compact support is uniquely determined by its values on the Schur polynomials $S_{\nu\mid N}\in\Sym(N)$. 
\end{lemma}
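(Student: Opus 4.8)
The plan is to show that the Schur polynomials $S_{\nu\mid N}$, as $\nu$ ranges over partitions with $\ell(\nu)\le N$, separate compactly supported measures on the cone $\De_N$. Since a compactly supported measure is determined by its values on a family of functions that is dense (in the uniform topology) in $C(K)$ for every compact $K$, it suffices to prove that the linear span of the Schur polynomials is uniformly dense in $C(K)$ for any compact $K\subset\De_N$. The Schur polynomials form a basis of $\Sym(N)$, so their linear span is exactly the algebra of symmetric polynomials in $N$ variables. Hence the statement reduces to: symmetric polynomials are uniformly dense in $C(K)$ for every compact $K\subset\De_N$.

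First I would invoke the Stone--Weierstrass theorem. The algebra $\Sym(N)$, restricted to $K$, is a subalgebra of $C(K)$ that contains the constants. The only nontrivial hypothesis to verify is that $\Sym(N)$ \emph{separates points} of $\De_N$. Two points of $\De_N$ are unordered $N$-tuples (weakly increasing representatives of orbits under the symmetric group $\Sym$), so separating points of $\De_N$ means: if two multisets $\{a_1,\dots,a_N\}$ and $\{a_1',\dots,a_N'\}$ are distinct, then some symmetric polynomial takes different values on them. This is classical: the power sums $p_k=\sum_i a_i^k$ for $k=1,\dots,N$ already determine the multiset of the $a_i$ via Newton's identities (they recover the elementary symmetric polynomials, hence the coefficients of $\prod_i(t-a_i)$, hence the roots with multiplicity). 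Thus $\Sym(N)$ separates points of $\De_N$.

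With point-separation established, Stone--Weierstrass gives uniform density of $\Sym(N)$ in $C(K)$ for every compact $K\subseteq\De_N$. Then if $\mu$ and $\mu'$ are two compactly supported measures on $\De_N$ agreeing on all $S_{\nu\mid N}$, they agree on all of $\Sym(N)$ by linearity; choosing a compact $K$ containing the supports of both measures, density yields $\int F\,d\mu=\int F\,d\mu'$ for every $F\in C(K)$, whence $\mu=\mu'$ by the Riesz representation theorem. This completes the argument.

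I do not anticipate a serious obstacle here; the proof is essentially an application of Stone--Weierstrass once one observes that the span of the Schur polynomials is the full algebra of symmetric polynomials. The one point requiring mild care is that $\De_N$ is a space of \emph{unordered} tuples (a closed Weyl chamber, i.e.\ a fundamental domain for $\Sym$ acting on $\R^N$), so that symmetric polynomials descend to genuine functions on $\De_N$ and point-separation must be phrased in terms of multisets; this is handled by Newton's identities as above. A secondary technical remark is that the values of $\mu$ on $S_{\nu\mid N}$ are well defined precisely because $\mu$ is compactly supported, so that all polynomials are integrable.
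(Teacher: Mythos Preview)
Your argument is correct and follows essentially the same route as the paper's proof: both observe that the Schur polynomials span $\Sym(N)$, that $\Sym(N)$ separates points of $\De_N$, and then invoke Stone--Weierstrass to obtain uniform density on any compact subset. The paper states the point-separation property without justification, whereas you supply the standard argument via power sums and Newton's identities; otherwise the two proofs are the same.
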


\begin{proof}
The key observation is that the functions $F\in\Sym(N)$ separate points of the cone $\De_N$. Therefore, by the Stone--Weierstrass theorem, restricting symmetric  polynomials to an arbitrary compact subset $\X\subset\De_N$ we get a dense subspace in the Banach space $C(\X)$ of continuous functions on $\X$. This implies the lemma.
\end{proof}

The set $\P(\GG_K)$ carries two topologies. One is the \emph{weak topology}, i.e. the topology of convergence on bounded continuous functions, and another is the topology of pointwise convergence of weights of atoms --- here we ignore the topology of the space $\GG_K$ and regard it simply as a countable discrete space. We need a name for the latter topology on $\P(\GG_K)$ --- let us call it the \emph{pointwise topology}. Note that it is stronger than the weak topology. 

\begin{proposition}\label{prop4.A}
Fix a pair of natural numbers $N>K$ and regard $\La^N_K$ as a map $\G_N\to\P(\G_K)$. There exists a unique extension of this map to a  map $\GG_N\to\P(\GG_K)$ which is continuous with respect to the weak topology on $\P(\GG_K)$.
\end{proposition}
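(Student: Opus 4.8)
The plan is to pin down the extension through the moment relations of Proposition~\ref{prop2.B}, using the compactness supplied by Proposition~\ref{prop2.C} and the uniqueness supplied by Lemma~\ref{lemma4.A}. Uniqueness of the extension is immediate: $\G_N$ is dense in $\GG_N$ and the weak topology on $\P(\GG_K)$ is Hausdorff, so a continuous map $\GG_N\to\P(\GG_K)$ is determined by its restriction to $\G_N$.

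For existence I first establish tightness. Fix $X_0\in\GG_N$ and choose $a'<0<b'$ with $X_0\subset(a',b')$. Every $X\in\G_N$ sufficiently close to $X_0$ then satisfies $X\subset[a',b']$, so by Proposition~\ref{prop2.C} the measure $\La^N_K(X,\,\cdot\,)$ is supported in $\G_K[a',b']\subset\GG_K[a',b']$. Since $\GG_K[a',b']$ is compact, $\P(\GG_K[a',b'])$ is compact in the weak topology, and hence the family $\{\La^N_K(X,\,\cdot\,)\}$, with $X$ ranging over a neighbourhood of $X_0$ in $\G_N$, is relatively compact in $\P(\GG_K)$.

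Next I identify all subsequential limits. Let $X_j\to X_0$ with $X_j\in\G_N$ and let $\mu$ be a weak limit of some subsequence of $\La^N_K(X_j,\,\cdot\,)$; then $\mu$ is a compactly supported measure on $\De_K$. Each $\wt S_{\nu\mid K}$ is a symmetric polynomial, hence continuous and bounded on the compact set $\GG_K[a',b']$, so passing to the limit in Proposition~\ref{prop2.B} yields
$$
\int_{\GG_K}\wt S_{\nu\mid K}(Y)\,\mu(dY)=\lim_j\wt S_{\nu\mid N}(X_j)=\wt S_{\nu\mid N}(X_0)
$$
for every partition $\nu$ with $\ell(\nu)\le K$, the last equality using continuity of $\wt S_{\nu\mid N}$ on $\De_N$. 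Because the $\wt S_{\nu\mid K}$ differ from the Schur polynomials $S_{\nu\mid K}$ only by nonzero constant factors, these relations prescribe the values of $\mu$ on all Schur polynomials, so by Lemma~\ref{lemma4.A} they determine $\mu$ uniquely among compactly supported measures on $\De_K$. Thus every subsequential limit coincides, $\La^N_K(X_j,\,\cdot\,)$ converges, and I define $\La^N_K(X_0,\,\cdot\,)$ to be the common limit; taking $\nu=\varnothing$ (so that $\wt S_{\nu\mid K}\equiv1$) shows it is a probability measure. The displayed moment relations persist for this extended measure, and continuity of the resulting map $\GG_N\to\P(\GG_K)$ follows by the identical argument applied to an arbitrary convergent sequence $X^{(m)}\to X_0$ in $\GG_N$: the supports again lie in a common compact $\GG_K[a',b']$, and every weak subsequential limit satisfies the moment relations with right-hand side $\wt S_{\nu\mid N}(X_0)$, so Lemma~\ref{lemma4.A} forces it to equal $\La^N_K(X_0,\,\cdot\,)$.

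The crux is the tightness step. As points of $X$ collide at $0\notin\L$, the interlacing forces the corresponding points of $Y$ toward $0$ as well, and one must check that no mass of $\La^N_K(X,\,\cdot\,)$ escapes and that the limit genuinely lives on $\GG_K$, i.e.\ on configurations that may have points stuck at $0$. This is precisely what Proposition~\ref{prop2.C} guarantees, and it is the reason the completed space $\GG_K$, rather than $\G_K$ itself, is the correct target; once tightness is secured, the moment method together with Lemma~\ref{lemma4.A} makes the identification of limits routine.
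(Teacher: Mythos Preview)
Your proof is correct and follows essentially the same route as the paper's own argument: both invoke Proposition~\ref{prop2.C} to confine the measures to a common compact $\GG_K[a',b']$, then use Proposition~\ref{prop2.B} to get convergence on Schur polynomials, and finally appeal to Lemma~\ref{lemma4.A} to identify the limit uniquely. Your write-up is more detailed (explicitly treating uniqueness, the subsequence argument, and continuity of the extended map on all of $\GG_N$), but the underlying strategy is identical.
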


\begin{proof}
This follows from Proposition \ref{prop2.C}, Proposition \ref{prop2.B},  and Lemma \ref{lemma4.A}. Indeed, assume that a sequence $\{X(n)\in\G_N\}$ converges to an element $X\in\GG_N$. Choose $a>0$ so large that the configuration $X$ is contained in $[-a,a]$. Since this set is open, the elements $X(n)$ also belong to it provided that  $n$ is large enough. 

On the other hand, by Proposition \ref{prop2.C}, the measures $\La^N_K(X(n),\, \cdot\,)$ with large $n$ are concentrated on the compact set $\GG_K[-a,a]$. Next, Proposition \ref{prop2.B} shows that they converge on the Schur polynomials. Applying Lemma \ref{lemma4.A} we see that these measures weakly converge to a probability measure on $\GG_K[a,b]$. This concludes the proof.
\end{proof}

We use the same notation $\La^N_K$ for the extended map. Since the spaces $\GG_N$ and $\GG_K$ are countable, the kernel $\La^N_K$ still may be viewed as a stochastic matrix, and we keep the same notation  $\La^N_K(X,Y)$ for its matrix entries. 

The next claims are immediate corollaries of the above argument: 
\ms

$\bullet$ The result of Proposition \ref{prop2.B} remains true for the extended matrices $\La^N_K$. That is, for every $X\in\GG_N$ and every partition $\nu$ with $\ell(\nu)\le K$
\begin{equation}\label{eq4.A}
\sum_{Y\in\GG_K}\La^N_K(X,Y)\wt S_{\nu\mid K}(Y)=\wt S_{\nu\mid N}(X).
\end{equation}
\ms

$\bullet$ The relations \eqref{eq4.A} determine $\La^N_K$ uniquely: for any $X\in\GG_N$, $\La^N_K(X,\,\cdot\,)$ is a unique probability measure on $\GG_K$ which is compactly supported and satisfies \eqref{eq4.A} for every $\nu$ with $\ell(\nu)\le K$.

\ms

$\bullet$ The extended matrices satisfy the same relations $\La^N_K=\La^N_{N-1}\dots\La^{K+1}_K$ as before.

\ms

The next lemma will be used in the proof of Theorem \ref{thm6.A} below.

\begin{lemma}\label{lemma4.B}
Let $X\in\GG_N$, where $N\ge2$, $X\ne0^N$, and let $x_0$ denote the point of $X$ with maximal absolute value, so that $x_0$ is either the leftmost or the rightmost point {\rm(}in the case these endpoints of $X$ have the same absolute value we take as $x_0$ any of them{\rm)}. 

Then the number $\La^N_1(X,x_0)$ is bounded from below by a universal positive constant{\rm:}
$$
\La^N_1(X,x_0)\ge\frac{(1-q)(q;q)_\infty}{\prod\limits_{i=0}^\infty(1+q^i)}>0.
$$
\end{lemma}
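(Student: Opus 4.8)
The plan is to reduce to the case $x_0>0$ and then apply the explicit summation formula \eqref{eq3.N} of Proposition \ref{prop3.D}. By the symmetry of \eqref{eq3.N} under the simultaneous change of sign of all variables (noted after Proposition \ref{prop3.D}), together with the reflection symmetry of the whole construction about $0$, one may assume without loss of generality that $x_0>0$, so that $x_0$ is the rightmost point of $X$. Although \eqref{eq3.N} was derived for $X\in\G_N$, it extends by continuity to $X\in\GG_N$: I would approximate the multiple point $0^n$ by genuine lattice points tending to $0$ while keeping $x_0$ fixed, producing a sequence $X(m)\in\G_N$ with the same extreme point $x_0$, and note that the right-hand side of \eqref{eq3.N} (with $y=x_0$) depends continuously on the remaining points. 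By Proposition \ref{prop4.A} the measures $\La^N_1(X(m),\,\cdot\,)$ converge weakly to $\La^N_1(X,\,\cdot\,)$; since $x_0$ is an isolated point of $\bar\L$, the mass at the atom $x_0$ passes to the limit, so the formula persists, a point $0$ of $X$ contributing a factor $x_0-0=x_0$ to the denominator.

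The decisive simplification is that for $y=x_0$ the index set $X(x_0)=\{x\in X:x\ge x_0\}$ reduces to the single point $x_0$, so the sum in \eqref{eq3.N} collapses to one term. Substituting $x=y=x_0$ and pulling out powers of $x_0$, I would rewrite the numerator as $x_0\prod_{k=1}^{N-2}(x_0-x_0q^k)=x_0^{N-1}(q;q)_{N-2}$ and use $(1-q^{N-1})(q;q)_{N-2}=(q;q)_{N-1}$ to obtain
\begin{equation*}
\La^N_1(X,x_0)=(q;q)_{N-1}\prod_{x'\in X\setminus\{x_0\}}\frac{x_0}{x_0-x'}.
\end{equation*}
Every factor is positive because $x_0$ is the largest point. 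The factors coming from positive points $x'=x_0q^k$ ($k\ge1$) equal $(1-q^k)^{-1}\ge1$, and the factors from points $x'=0$ equal $1$; hence these can only increase the product and may be discarded for a lower bound.

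The heart of the argument is the estimate of the factors coming from the negative points of $X$. Here I would use that $x_0$ has maximal absolute value, so every negative point $x'$ lies on $\L^-$ and satisfies $|x'|\le x_0$; consequently the absolute values of the negative points, listed in decreasing order as $s_1>s_2>\dots$, form a subset of the geometric progression $\{d_0q^j\}_{j\ge0}$ with $d_0\le x_0$, which forces $s_r\le x_0q^{\,r-1}$. Therefore each negative factor satisfies $\dfrac{x_0}{x_0-x'}=\dfrac{x_0}{x_0+s_r}\ge\dfrac{1}{1+q^{\,r-1}}$, and multiplying over all negative points gives a product bounded below by $\prod_{i=0}^\infty(1+q^i)^{-1}$. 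Combining this with $(q;q)_{N-1}\ge(q;q)_\infty\ge(1-q)(q;q)_\infty$ yields the claimed inequality (in fact the slightly stronger bound with $(q;q)_\infty$ in place of $(1-q)(q;q)_\infty$). The only genuinely delicate points are the lattice estimate $s_r\le x_0q^{\,r-1}$, which is exactly where the structure of the two-sided $q$-lattice is essential, and the continuity argument extending \eqref{eq3.N} to configurations with points at $0$.
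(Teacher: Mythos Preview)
Your argument is correct and follows the same route as the paper: collapse the sum in \eqref{eq3.N} to the single term $x=x_0$, rewrite as $(q;q)_{N-1}\prod_{x'}\frac{x_0}{x_0-x'}$, and bound the negative-point factors via the geometric spacing of $\L^-$. You are in fact slightly more careful than the paper (you justify the extension of \eqref{eq3.N} from $\G_N$ to $\GG_N$ by continuity at the isolated point $x_0$, and you spell out the lattice estimate $s_r\le x_0q^{\,r-1}$ that the paper leaves implicit), and you correctly observe that the argument actually yields the sharper constant $(q;q)_\infty\big/\prod_{i\ge0}(1+q^i)$.
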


\begin{proof}
Suppose $x_0>0$. We apply  \eqref{eq3.N} and observe that for $y=x_0$ the subset $X(y)$ consists solely of $x_0$. Then \eqref{eq3.N} gives 
$$
\La^N_1(X,x_0)=\frac{(q;q)_{N-1}\,x_0^{N-1}}{\prod\limits_{x'\in X\setminus\{x_0\}}(x_0-x')}\ge\frac{(1-q)(q;q)_\infty}{\prod\limits_{x'\in X\setminus\{x_0\}}\left(1-\dfrac{x'}{x_0}\right)}.
$$
The product in the denominator is bounded from above by the convergent product $\prod_{i=0}^\infty(1+q^i)$, which gives the desired lower bound. 

In the case $x_0<0$ the argument is the same.
\end{proof}

\section{The entrance boundary: general facts}\label{sect5}

This section provides a number of definitions and known results that we will need. For more details, see \cite[Section 9]{Ols-JFA} and references therein.  We use the notation introduced in  Section \ref{sect2.1}.

Let $\VG_1,\VG_2,\dots$ be an infinite sequence of nonempty sets each of which is finite or countably infinite. For every $N$, the set $\P(\VG_N)$ can be regarded as a simplex with vertices in $\VG_N$. Assume that for every $N$ we are given a Markov kernel $\La^{N+1}_N:\VG_{N+1}\dasharrow\VG_N$. Because the spaces $\VG_N$ are discrete, these kernels are simply stochastic matrices. They determine affine maps of simplices $\P(\VG_{N+1})\to\P(\VG_N)$, so we may form the projective limit space $\varprojlim\P(\VG_N)$. In what follow we tacitly assume that the space $\varprojlim\P(\VG_N)$ is nonempty. By definition, elements of $\varprojlim\P(\VG_N)$ are sequences $\{M_N\in\P(\VG_N): N=1,2,\dots\}$ consisting of measures, which are compatible with the matrices $\Lan$ in the sense that
$$
M_{N+1}\Lan= M_N, \qquad N=1,2,\dots\,.
$$
Such sequences are called \emph{coherent families} of probability measures.  

Let $\VG$ denote the whole collection $\{\VG_N, \Lan: \, N=1,2,\dots\}$. We may regard $\VG$ as an inhomegeneous Markov chain with state spaces  $\VG_N$ and transition kernels $\Lan$, and then we define the \emph{boundary} $\partial\VG$ as the minimal entrance boundary of that chain in the sense of  Dynkin \cite{Dynkin} (the fact that our chain is space-inhomogeneous does not matter). 

An equivalent definition is that $\partial\VG$ is the set of extreme points of $\varprojlim\P(\VG_N)$. This makes sense because $\varprojlim\P(\VG_N)$ possesses an evident structure of convex set.

The space $\varprojlim\P(\VG_N)$ also possesses a natural structure of standard Borel space and the boundary $\partial\VG$ is a Borel subset.  By the very definition of projective limit, there are canonical Markov kernels
$$
\La^\infty_N: \partial\VG \dasharrow \VG_N, \qquad N=1,2,\dots,
$$
and these kernels are compatible with the matrices $\Lan$ in the sense that $$
\La^\infty_{N+1}\Lan=\La^\infty_N.
$$

The space $\varprojlim\P(\VG_N)$  is a Choquet simplex, which implies that there is a  bijective correspondence
$\P(\partial\VG)\to \varprojlim\P(\GG_N)$
given by
$$
M\mapsto \{M\La^\infty_N: N=1,2,\dots\}.
$$
In words: every coherent family of probability measures can be represented as a (continual) convex combination of extreme coherent families; this representation is unique; conversely, every (continual) convex combination of extreme coherent families is a coherent family. 

A sequence $\{X(N)\in\VG_N: N=1,2,\dots\}$ is said to be \emph{regular} if for every $K=1,2,\dots$ there exists a limit
$$
M_K=\lim_{N\to\infty}\La^N_K(X(N),\,\cdot\,)\in\P(\VG_K)
$$
with respect to pointwise topology. (In other words, for every $Y\in\VG_K$ there exists a limit $\lim_{N\to\infty} \La^N_K(X(N),Y)$ and the sum of the limit values over all $Y\in\VG_K$ equals $1$.) The sequence $\{M_K\}$ arising in this way is always a coherent family. 

For any $X\in\partial\VG$ there exists a regular sequence $\{X(N)\}$, which \emph{approximates} $X$ in the sense that the coherent family $\{M_K\}$ arising from $\{X(N)\}$ coincides with the extreme coherent family corresponding to $X$ (see Okounkov--Olshanski \cite[Theorem 6.1]{OO-IMRN}). In other words, the minimal entrance boundary is contained in the Martin entrance boundary.

\section{The $q$-boundary}\label{sect6}

Let $\GG$ denote the collection of the spaces $\GG_N$, $N=1,2,\dots$, together with the Markov kernels $\Lan:\GG_{N+1}\dasharrow\GG_N$, which were defined in Section \ref{sect4}. Our next goal is to describe the boundary $\partial\GG$.

\begin{definition} \label{def6.A}
Let $\GG_\infty$ be the set of point configurations on $\L$, which are either finite or infinite but bounded as subsets of $\R$. Given a small $\epsi>0$, we say that two configurations from $\GG_\infty$ are \emph{$\epsi$-close} to each other if they coincide outside the interval $(-\epsi,\epsi)\subset\R$. This turns  $\GG_\infty$ into a uniform space and hence a topological space.  
\end{definition}

Note the following obvious facts:

\ms

$\bullet$ For any $a>0$ the subset of $\GG_\infty$ formed by the configurations contained in the segment $[-a,a]$ is open and  compact. This in turn implies that $\GG_\infty$ is a locally compact space. 

\ms

$\bullet$ Consider the stratification 
$$
\GG_\infty=\GG_\infty^{(0)}\cup \GG_\infty^{(1)}\cup\dots\cup\GG_\infty^{(\infty)},
$$
where $\GG^{(k)}$ is the subset of $k$-point configurations and $\GG_\infty^{(\infty)}$ is the subset of infinite configurations. The closure of $\GG_\infty^{(k)}$ is the union $\GG_\infty^{(0)}\cup\dots\cup\GG_\infty^{(k)}$. Both $\GG_\infty^{(\infty)}$ and its complement are dense in $\GG_\infty$. 

\ms 

$\bullet$ There is a natural embedding $\GG_N\to\GG_\infty$: it assigns to $X=(X^\circ, 0^m)\in\GG_{N,m}$ the configuration $X^\circ\in\GG_\infty$ and establishes a bijection $\GG_{N,m}\to\GG_\infty^{(N-m)}$.

\ms

$\bullet$ Denote by $\Sym$ the algebra of symmetric functions. For $F\in\Sym$, its evaluation $F(X)$ at an arbitrary $X\in\GG_\infty$ makes sense, and $F(X)$ is a continuous function on $\GG_\infty$: this is seen from the fact that if $X$ is contained in $[-a,a]$, then there is a bound
$$
\sum_{x\in X}|x|^k\le \frac{2a^k}{1-q}, \qquad k=1,2,\dots\,.
$$
In this way one obtains a realization of $\Sym$ as a subalgebra of the algebra of continuous functions on $\GG_\infty$.  

\ms

Let  $S_\nu\in\Sym$ denote the Schur function indexed by a given partition $\nu$ and let $S_{\nu\mid\infty}$ denote the corresponding function on $\GG_\infty$. Next, by analogy with \eqref{eq2.B} we set 
$$
\wt S_{\nu\mid\infty}:=\frac{S_{\nu\mid\infty}}{S_{\nu\mid\infty}(1,q,q^2,\dots)}.
$$

\begin{theorem}\label{thm6.A}
The elements of the boundary $\partial\GG$ can be parametrized by the configurations $X\in\GG_\infty$. 

More precisely, to every $X\in\GG_N$ there corresponds a coherent family $M^{(X)}=\{M^{(X)}_K: K=1,2,\dots\}$; here  the $K$th measure $M^{(X)}_K\in\P(\GG_K)$ is uniquely determined by the relations
\begin{equation}\label{eq6.A}
\sum_{Y\in\GG_K}M^{(X)}_K(Y)\wt S_{\nu\mid K}(Y)=\wt S_{\nu\mid\infty}(X),
\end{equation}
where $\nu$ is an arbitrary partition with $\ell(\nu)\le K$. The coherent families $M^{(X)}$ are pairwise distinct and are precisely the extreme ones. 

Furthermore, the bijection $\GG_\infty\leftrightarrow\partial\GG$ is an isomorphism of Borel spaces. 
\end{theorem}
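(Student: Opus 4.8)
The plan is to prove Theorem \ref{thm6.A} by combining the general boundary formalism of Section \ref{sect5} with the explicit asymptotic analysis made possible by the formulas of Section \ref{sect3}. The overall strategy is to identify $\partial\GG$ with $\GG_\infty$ by constructing, for each $X\in\GG_\infty$, the coherent family $M^{(X)}$, proving that these families are coherent, extreme, pairwise distinct, and exhaust the boundary, and finally checking that the parametrization is a Borel isomorphism.

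First I would establish that for each $X\in\GG_\infty$ the relations \eqref{eq6.A} do determine a unique compactly supported probability measure $M^{(X)}_K\in\P(\GG_K)$. Uniqueness follows from Lemma \ref{lemma4.A} (the Schur polynomials separate points of the cone and are dense on compacts), exactly as in the uniqueness assertion following Proposition \ref{prop4.A}. Existence is the substantive point: I would produce $M^{(X)}_K$ as a limit $\lim_{N\to\infty}\La^N_K(X(N),\,\cdot\,)$ along any sequence $X(N)\in\GG_N$ converging to $X$ in $\GG_\infty$. Concretely, if $X$ is contained in $[-a,a]$, one truncates $X$ to its $N$ points of largest absolute value (padding with zeros when $X$ is finite) to get $X(N)\in\GG_N[-a,a]$. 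The key is to show this sequence is \emph{regular} in the sense of Section \ref{sect5}, i.e. that $\La^N_K(X(N),Y)$ converges pointwise with total mass $1$. Here I would feed $X(N)$ into the exact contour-integral/determinantal formula of Theorem \ref{thm3.A} (Remark \ref{rem3.A}) and pass to the large-$N$ limit: the products $\prod_{x\in X(N)}(1-xz_j^{-1})^{-1}$ converge to the infinite product over $x\in X$ (using the uniform moment bound $\sum_{x\in X}|x|^k\le 2a^k/(1-q)$ to control convergence), while the $q$-Pochhammer prefactors stabilize because $(q;q)_{N-i}/((q;q)_{K-i}(q;q)_{N-K})\to 1/(q;q)_{K-i}$ and $(1-q^{N-K})^K\to 1$ as $N\to\infty$. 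This yields an explicit formula for $M^{(X)}_K$ and simultaneously verifies the defining relations \eqref{eq6.A} by passing to the limit in \eqref{eq4.A}.

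Next I would address coherence, extremality, and separation. Coherence of $\{M^{(X)}_K\}$ is automatic from the Section \ref{sect5} machinery, since every regular sequence produces a coherent family; alternatively it follows by applying $\Lan$ to \eqref{eq6.A} and using $\wt S_{\nu\mid K}=\Lan\wt S_{\nu\mid K-1}$ in the form of Proposition \ref{prop2.B}. That the families $M^{(X)}$ are pairwise distinct follows because the right-hand sides $\wt S_{\nu\mid\infty}(X)$ separate the points of $\GG_\infty$: the normalized Schur functions, or equivalently the power sums $p_k(X)=\sum_{x\in X}x^k$, distinguish any two distinct bounded configurations. For extremality and exhaustion I would invoke the approximation result cited at the end of Section \ref{sect5} (Okounkov--Olshanski): every point of $\partial\GG$ is approximated by some regular sequence, so it suffices to show that every regular sequence $\{X(N)\}$ yields a family of the form $M^{(X)}$ for a unique $X\in\GG_\infty$. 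This is where Lemma \ref{lemma4.B} is essential: the uniform lower bound $\La^N_1(X(N),x_0)\ge c>0$ on the mass at the extreme point forces the largest-modulus points of $X(N)$ to stabilize, and an inductive peeling argument (removing the outermost point and applying Lemma \ref{lemma4.B} to the remainder) shows that $X(N)$ converges in $\GG_\infty$ to a well-defined limiting configuration $X$, with $M_K=M^{(X)}_K$.

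The hard part will be this last step: controlling the full large-$N$ geometry of a regular sequence and showing it converges to a genuine element of $\GG_\infty$ rather than losing mass to infinity or failing to localize. The uniform moment bound prevents escape to $\pm\infty$, and Lemma \ref{lemma4.B} prevents the outermost points from dissolving; the delicate point is the simultaneous control of \emph{all} the points via an induction that repeatedly applies the one-step formula \eqref{eq3.N} to successive ``shells'' of the configuration, ensuring the total mass of the limiting family remains $1$ (no leakage). Once the bijection $\GG_\infty\leftrightarrow\partial\GG$ is established set-theoretically, the final Borel-isomorphism claim is routine: the maps $X\mapsto M^{(X)}_K(Y)$ are Borel (indeed continuous on compacts, by the explicit formula from Theorem \ref{thm3.A}), so the parametrization and its inverse are Borel, and both $\GG_\infty$ and $\partial\GG$ are standard Borel spaces, whence a Borel bijection is automatically a Borel isomorphism.
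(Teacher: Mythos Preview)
Your proposal follows the same broad outline as the paper's proof, but there is a genuine gap in the extremality argument. You write that ``for extremality and exhaustion'' it suffices to invoke the Okounkov--Olshanski approximation result and show that every regular sequence produces some $M^{(X)}$. This gives only one inclusion, $\partial\GG\subseteq\{M^{(X)}:X\in\GG_\infty\}$. It does \emph{not} show that each $M^{(X)}$ is actually extreme; a priori some $M^{(X)}$ could be a nontrivial mixture of other $M^{(X')}$'s. The paper handles this separately (its Step 3): one writes $M^{(X)}=\int M^{(X')}\pi(dX')$ using Choquet's decomposition, uses Lemma \ref{lemma4.B} to force $\pi$ to be supported on a compact $\GG_\infty[-a,a]$, and then applies \eqref{eq6.A} together with the density of symmetric functions on compacts to conclude that $\pi$ is the delta measure at $X$. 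Without this step the bijection $\GG_\infty\leftrightarrow\partial\GG$ is not established.

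A secondary point: your ``inductive peeling argument'' for showing that a regular sequence converges in $\GG_\infty$ is both more delicate than necessary and not obviously correct as stated. Lemma \ref{lemma4.B} bounds $\La^N_1(X,x_0)$ from below, which gives tightness of $\{X(N)\}$ (otherwise the $K=1$ marginals could not be tight); it says nothing about $\La^{N-1}_1$ applied to the configuration with $x_0$ removed, so the induction step lacks a mechanism. The paper avoids this entirely: once tightness gives $X(N)\in\GG_N[-a,a]$ for all $N$, compactness of $\GG_\infty[-a,a]$ yields a convergent subsequence, and Step 1 (which you have) shows the limit is independent of the subsequence, hence the full sequence converges. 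This is much cleaner than peeling.
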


\begin{proof}
Below we denote by $\GG_\infty[-a,a]$ the subset of $\GG_\infty$ formed by the configurations contained in the closed interval $[-a,a]$, where $a>0$ is a given real number. Recall that the definition of $\GG_K[-a,a]\subset\GG_K$ is analogous, see Section \ref{sect4}. Using the embedding $\GG_K\to\GG_\infty$ defined above we may also write $\GG_K[-a,a]=\GG_K\cap\GG_\infty[-a,a]$.

\emph{Step} 1.  Let $\{X(N)\in\GG_N: N=1,2,\dots\}$ be a sequence converging to some element $X\in\GG_\infty$ (here we tacitly use the embeddings $\GG_N\to\GG_\infty$). Then $\{X(N)\}$ is regular  and the corresponding coherent family is characterized by the relations \eqref{eq6.A}; in particular, it depends only on $X$.

Indeed, choose $a>0$ so large that $X\in\GG_\infty[-a,a]$. Then $X(N)\in\GG_N[-a,a]$ for all $N$ large enough. Then, by virtue of  \eqref{eq4.A} we have
$$
\sum_{Y\in\GG_K[-a,a]}\La^N_K(X(N),Y)\wt S_{\nu\mid K}(Y)=\wt S_{\nu\mid N}(X(N)), \qquad \ell(\nu)\le K,
$$
for every large $N$ and $K<N$. Therefore, for every $K$ and any partition $\nu$ with $\ell(\nu)\le K$ there exists a limit
$$
\lim_{N\to\infty}\sum_{Y\in\GG_K[-a,a]}\La^N_K(X(N),Y)\wt S_{\nu\mid K}(Y)=\wt S_{\nu\mid\infty}(X). 
$$
Since the set $\GG_K[-a,a]$ is compact and the symmetric polynomials restricted to it are dense in the space $C(\GG_K[-a,a])$, we get the desired claim.

The above argument shows that that the coherent families $M^{(X)}$ satisfying \eqref{eq6.A} do exist. Moreover, distinct elements $X$ produce distinct families.  

\emph{Step} 2. Conversely, if a sequence $\{X(N)\in\GG_N\}$ is regular, then it converges to some element $X\in\GG_\infty$.

Indeed, Lemma \ref{lemma4.B} tells us that there exists $a>0$ such that $X(N)\in\GG_N[-a,a]$ for all $N$; otherwise the sequence $\{\La^N_1(X(N),\,\cdot\,): N=1,2,\dots\}$ of probability measures on $\GG_1=\bar\L$  fails to be tight. Then, because of compactness of $\GG_\infty[-a,a]$, choosing a subsequence of indices $N$ we may assume that $X(N)$ converges to a certain element $X\in\GG_\infty$ along this subsequence. The result of step 1 implies that $X$ does not depend on the the subsequence chosen. Therefore, $X$ is the limit of $X(N)$'s, as desired. 

\emph{Step} 3. The results of steps 1 and 2 imply that the set of extreme coherent families is contained in the set of coherent families of the form $M^{(X)}$ with $X\in\GG_\infty$. We want to prove that both sets actually coincide. 

Let us fix an arbitrary element $X\in\GG_\infty$ and show that $M^{(X)}$ is extreme. We know that  $M^{(X)}$ can be represented, in a unique way, as a (continual) convex combination of extreme coherent families, governed by a mixing probability Borel measure $\pi$ supported by the set $\partial\GG$:
$$
M^{(X)}=\int_{M'\in\partial\GG} M' \pi(dM').
$$

Since each $M'$ has the form $M^{(X')}$ for some $X'\in\GG_\infty$, we would like to interpret $\pi$ as a Borel measure on $\GG_\infty$. But here is a subtle point: we need to know that the natural Borel structure on the space $\GG_\infty$ coincides with that induced from the ambient space $\varprojlim\P(\GG_N)$. But this is indeed true, as shown in the final step 4. Using this fact we may write
$$
M^{(X)}=\int_{X'\in\GG_\infty}M^{(X')}\pi(dX'), 
$$
and then we have to prove that $\pi$ is actually the delta measure at $X$.

The exact meaning of the above equality is that
$$
M^{(X)}_K(Y)=\int_{X'\in\GG_\infty}M^{(X')}_K(Y)\pi(dX')
$$
for every $K=1,2,\dots$ and any $Y\in\GG_K$. Setting $K=1$ and applying Lemma \ref{lemma4.B} we see that $\pi$ is concentrated on a compact subset of the form $\GG_\infty[-a,a]$. Then we may apply \eqref{eq6.A}, which implies that 
$$
\wt S_{\nu\mid\infty}(X)=\int_{X'\in\GG_\infty[-a,a]}\wt S_{\nu\mid\infty}(X')\pi(dX')
$$
for any partition $\nu$. But since the symmetric functions are dense in the Banach space $C(\GG_\infty[-a,a])$, this may happen only if $\pi$ is the delta measure at $X$, as desired. 

\emph{Step} 4. 
It remains to justify the translation of $\pi$ to $\GG_\infty$. That is, we have to prove that the injective map  $\GG_\infty \to \varprojlim\P(\GG_N)$ assigning to $X\in\GG_\infty$ the corresponding coherent family $M^{(X)}$ is a Borel isomorphism onto its image. A way to do this is to apply a theorem from descriptive set theory (see Kechris \cite[Corollary 15.2]{Kechris}). The hypotheses of this theorem are satisfied because both spaces are standard and the map is Borel. 

This completes the proof.
\end{proof}

\begin{corollary}\label{cor6.A}
Let $X\in\GG_\infty$ and $Y=(y_1,\dots,y_K)\in\G_K$.  We have
\begin{multline}\label{eq6.B}
\La^\infty_K(X,Y)=V(Y)\prod_{i=1}^K\frac1{(q;q)_{K-i}}\cdot\frac{|y_1|\dots |y_K|}{(2\pi\sqrt{-1})^K}\\
\times \int_{C(y_1)}\dots\int_{C(y_K)}V(Z^{-1})\prod_{j=1}^K\frac{(y_jz_j^{-1}q;q)_\infty}{\prod_{x\in X}(1-x z_j^{-1})}\,\frac{dz_1}{z_1^2}\dots \frac{dz_K}{z_K^2}.
\end{multline}
\end{corollary}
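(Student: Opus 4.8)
The plan is to obtain formula \eqref{eq6.B} as the $N\to\infty$ limit of the explicit formula \eqref{eq3.I} of Theorem \ref{thm3.A}. The corollary records the kernel $\La^\infty_K$, which by the boundary theory of Section \ref{sect5} is the pointwise limit $\La^\infty_K(X,\,\cdot\,)=\lim_{N\to\infty}\La^N_K(X(N),\,\cdot\,)$ taken along any sequence $X(N)\in\GG_N$ converging to $X\in\GG_\infty$; this limit exists and is independent of the approximating sequence by Theorem \ref{thm6.A} (Steps 1 and 2). So the whole task reduces to passing to the limit inside the contour-integral expression \eqref{eq3.I}.

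First I would treat the scalar prefactors. The product $\prod_{i=1}^K\frac{(q;q)_{N-i}}{(q;q)_{K-i}(q;q)_{N-K}}$ simplifies in the limit: since $(q;q)_{N-i}\to(q;q)_\infty$ and $(q;q)_{N-K}\to(q;q)_\infty$ as $N\to\infty$, each ratio $\frac{(q;q)_{N-i}}{(q;q)_{N-K}}$ tends to $1$, leaving exactly $\prod_{i=1}^K\frac1{(q;q)_{K-i}}$, which is the prefactor appearing in \eqref{eq6.B}. Likewise $(1-q^{N-K})^K\to 1$. This accounts for the replacement of the $N$-dependent constant in \eqref{eq3.I} by the $N$-free constant in \eqref{eq6.B}. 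Second, inside the integrand the only $N$-dependent factor is $\prod_{j=1}^K(y_jz_j^{-1}q;q)_{N-K-1}$, and since $y_jz_j^{-1}q$ stays fixed along the contour $C(y_j)$, we have $(y_jz_j^{-1}q;q)_{N-K-1}\to(y_jz_j^{-1}q;q)_\infty$, which is precisely the factor in \eqref{eq6.B}. The remaining ingredients $V(Y)$, $V(Z^{-1})$, the factor $\prod_x(1-xz_j^{-1})^{-1}$, the $|y_j|$'s, and the measure $\frac{dz_j}{z_j^2}$ are all independent of $N$ and pass through unchanged.

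The main obstacle will be justifying the interchange of the limit $N\to\infty$ with the $K$-fold contour integration, together with the passage from $X(N)$ to the limiting configuration $X$ in the product $\prod_{x\in X}\frac1{1-xz_j^{-1}}$. For this I would fix $a>0$ with $X\subset[-a,a]$, so that by Lemma \ref{lemma4.B} (and Step 2 of the proof of Theorem \ref{thm6.A}) we may assume all $X(N)\subset[-a,a]$ as well. On each contour $C(y_j)$ the integrand of \eqref{eq3.I} is $O(|z_j|^{-2})$ uniformly in $N$: the factor $(y_jz_j^{-1}q;q)_{N-K-1}$ is bounded on $C(y_j)$ uniformly in $N$ because the individual binomials $(1-y_jz_j^{-1}q^m)$ are uniformly close to $1$ for large $m$ and their partial products converge, while $\prod_{x\in X(N)}(1-xz_j^{-1})^{-1}$ is uniformly bounded once all configurations lie in $[-a,a]$ (the relevant products are dominated by $\prod_{i\ge0}(1+q^i a/|z_j|)$-type bounds, as in Lemma \ref{lemma4.B}). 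This gives an integrable $N$-independent majorant on the product contour, so dominated convergence applies and the limit may be taken under the integral sign. Finally, since $X(N)\to X$ as configurations, only finitely many points move in any window bounded away from $0$, and the uniform tail control near $0$ shows $\prod_{x\in X(N)}(1-xz_j^{-1})^{-1}\to\prod_{x\in X}(1-xz_j^{-1})^{-1}$ uniformly on each $C(y_j)$. Combining the convergence of the prefactors, the integrand, and the justified limit interchange yields \eqref{eq6.B}.
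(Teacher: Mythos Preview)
Your proposal is correct and follows exactly the paper's approach: choose an approximating sequence $X(N)\to X$, use Step~1 of Theorem~\ref{thm6.A} to identify $\La^\infty_K(X,Y)$ as $\lim_{N\to\infty}\La^N_K(X(N),Y)$, and pass to the limit in \eqref{eq3.I}. The paper's own proof is a two-line sketch of precisely this argument; you have simply supplied the details of the prefactor convergence and the dominated-convergence justification (one small remark: since \eqref{eq3.I} is stated for $X\in\G_N$, take $X(N)\in\G_N$ rather than $\GG_N$, and note that the boundedness $X(N)\subset[-a,a]$ then follows directly from the choice of approximating sequence, without appealing to Lemma~\ref{lemma4.B}).
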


\begin{proof}
Choose a sequence $\{X(N)\in\G_N\}$  converging to $X$. The argument of Step 1 in the proof of Theorem \ref{thm6.A} shows that $\La^\infty_K(X,Y)=\lim_{N\to\infty}\La^N_K(X(N),Y)$. Then we pass to the limit in \eqref{eq3.I}.
\end{proof}

\begin{remark}\label{rem6.A}
Formula \eqref{eq6.B} looses its meaning if $Y\in\GG_K\setminus\G_K$, because then the $K$-fold integral in the right-hand side is not defined.
\end{remark}

\section{Divided differences,  $q$-B-splines, and division by the Vandermonde}\label{sect7}

Let us recall a few facts about divided differences and B-splines. For more details, see de Boor \cite{deBoor}, Curry and Schoenberg \cite{CS}, Faraut \cite{Faraut-Peano}, \cite{Faraut}, Phillips \cite{Phillips}.

Given a function $f(t)$, its \emph{divided difference  with $N$ pairwise distinct knots $x_1,\dots,x_N$} is the quantity $f[x_1,\dots,x_N]$ defined recursively by
$$  
\begin{gathered}
f[x_1]:=f(x_1), \qquad f[x_1,x_2]:=\frac{f(x_2)-f(x_1)}{x_2-x_1},\; \dots\\
\dots\;, f[x_1,\dots,x_N]:=\frac{f[x_2,\dots,x_N]-f[x_1,\dots,x_N]}{x_N-x_1}.
\end{gathered}
$$
The quantity $f[x_1,\dots,x_N]$ is invariant under permutations of $x_i$'s. We have
\begin{equation}\label{eq7.G}
\begin{gathered}
t^n[x_1,\dots,x_N]=0, \qquad n<N-1\\
t^{m+N-1}[x_1,\dots,x_N]=h_m(x_1,\dots,x_N), \qquad m=0,1,2,\dots,
\end{gathered}
\end{equation}
where $h_m\in\Sym$ is the complete homogeneous symmetric function of degree $m$.

Suppose now that $x_1,\dots,x_N$ are real numbers. There exists a unique probability measure $B_N(x_1,\dots,x_N)$ on $\R$ with the moments
\begin{equation}\label{eq7.A}
\langle t^m, B_N(x_1,\dots,x_N) \rangle = \frac{m!(N-1)!}{(m+N-1)!}h_m(x_1,\dots,x_N),
\end{equation}
where the angular brackets denote the pairing between functions and measures. The support of $B_N(x_1,\dots,x_N)$ is the smallest closed interval containing the knots $x_1,\dots,x_N$. If the knots are pairwise distinct, then $B_N(x_1,\dots,x_N)$ has a density $B_N(x_1,\dots,x_N; t)$, which is of class $C^{N-3}$ and is given by a polynomial of degree $N-2$ on each interval between two consecutive knots. 

The measure $B_N(x_1,\dots,x_N)$ (or its density $B_N(x_1,\dots,x_N;t)$) is called the \emph{B-spline}. Initially it was called ``fundamental spline'' (Curry and Schoenberg \cite{CS}). Some authors use a different normalization: so, in Phillips' book  \cite{Phillips} the term B-spline refers to the function $\frac{x_N-x_1}{N-1} B_N(x_1,\dots,x_N;t)$, where it is supposed that $x_1<\dots<x_N$ (but then the symmetry in $x_1,\dots,x_N$ is lost). 

The B-spline is linked to divided differences by the \emph{Hermite--Genocchi formula}:  if $f(t)$ is a function on $\R$ of class $C^{N-1}$, then 
\begin{equation}\label{eq7.B}
f[x_1,\dots,x_N]=\frac1{(N-1)!}\int_\R f^{(N-1)}(t) B_N(x_1,\dots,x_N;t)dt.
\end{equation}

As shown in recent papers \cite{SG}, \cite{BDGO},  for the B-spline and various related classic formulas including the Hermite--Genocchi formula, there exist $q$-analogues.  I will state below a few results from these works with minor modifications and a different proof.  

Recall the definitions of $q$-derivation and $q$-integration (see, e.g. Gasper and Rahman \cite{GR}); both are well adapted to the lattice $\L$.

Given a function $f(t)$ on $\L$, its \emph{$q$-derivative} is defined by
$$
(D_q f)(t)=\frac{f(t)-f(tq)}{t(1-q)}, \qquad t\in\L.
$$
In what follows we drop the parentheses and write $D_q f(t)$ instead of $(D_q f)(t)$.

The operator $D_q$ preserves the space of polynomials: we have
$$
D_q t^n =[n]_q t^{n-1},
$$
where
$$
[n]_q:=\frac{1-q^n}{1-q}.
$$

\begin{definition}\label{def7.A}
Denote by $C^0(\L)$ the space of functions on $\bar\L$ that are continuous at $0$. Next, for $n=1,2,\dots$,  let $C^n(\L)$ be the subspace of functions $f\in C^0(\L)$ such that for every $\ell=1,\dots,n$ there exists a limit
$$
D^\ell_qf(0):=\lim_{t\to 0}D^\ell_q f(t),
$$
where $D^\ell_q$ is the $\ell$th power of $D_q$. For $f\in C^n(\L)$ we also say that it is of class $C^n$. Finally,   $f\in C^\infty(\L)$ means that $f$ is of class $C^n $ for all $n$. For instance, if $f$ coincides with a polynomial in a neighborhood of $0$, then $f\in C^\infty(\L)$. 
\end{definition}

The \emph{$q$-integral} can be defined as follows. The \emph{canonical measure} $\mu$ on $\L$ is the infinite measure on $\L$ with weights $\mu(t)=(1-q)|t|$. If $f(t)$ is of class $C^0$ and $a<b$ are two points of $\L$, then we set
$$
\int_a^b f(t) d_qt=-\int_b^a f(t) d_qt=\langle f, \mu_{I(a,b)}\rangle,
$$
where the interval $I(a,b)$ is defined in \eqref{eq2.D} and  $\mu_{I(a,b)}$ denotes the restriction of $\mu$ to $I(a,b)$. The definition trivially extends to the case when $a$ or $b$ equals $0$. 

Two basic formulas of $q$-calculus are analogues of Newton--Leibniz and integration by parts:
\begin{equation}\label{eq7.C}
f(b)-f(a)=\int_a^b f(t) d_qt, 
\end{equation}
where $f$ is of class $C^1$, and
\begin{equation}\label{eq7.D}
\int D_q f(t) g(t) d_qt=-\int f(tq) D_q g(t) d_qt,
\end{equation}
where $f$ and $g$ are of class $C^1$ and one of them has bounded support (the $q$-integral without limits is understood as the integral against $\mu$). 

Below we denote by $[m]_q! $ the $q$-factorial:
$$
[m]_q!:=[1]_q\dots[m]_q=\frac{(q;q)_m}{(1-q)^m}, \qquad m=0,1,2,\dots\,.
$$
If $X=(x_1,\dots,x_N)\in\G_N$, then $h_m(X):=h_m(x_1,\dots,x_N)$. More generally, if $X\in\GG_N$, then the value of the function $h_m\in\Sym$ at $X$ is defined by continuity; thus, if $X=(x_1,\dots,x_k)\cup 0^{N-k}\in\GG_N$, where $x_1,\dots,x_k\in\L$ are pairwise distinct, then $h_m(X)=h_m(x_1,\dots,x_k)$. 

\begin{theorem}\label{thm7.A}
{\rm(i)} Let $X\in\GG_N$ be arbitrary. There exists a unique probability measure $B^q_N(X)$ on $\bar\L$ with the moments
\begin{equation}\label{eq7.E}
\langle t^m, B^q_N(X)\rangle=\frac{[m]_q![N-1]_q!}{[m+N-1]_q!}h_m(X), \qquad m=0,1,2,\dots\,.
\end{equation}
Its support is contained in the smallest segment containing $X$.

{\rm(ii)} If $X=(x_1,\dots,x_N)\in\G_N$ and $f\in C^{N-1}(\L)$, then
\begin{equation}\label{eq7.F}
f[x_1,\dots,x_N]=\frac1{[N-1]_q!}\langle D_q^{N-1}f, B^q_N(X)\rangle.
\end{equation}
\end{theorem}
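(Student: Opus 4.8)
The plan is to reduce both parts to the kernels $\La^N_1$ computed in Section~\ref{sect3}. For part~(i), I would first check that the prescribed moments in \eqref{eq7.E} are exactly the moments of $\La^N_1(X,\,\cdot\,)$. Writing $[m]_q!=(q;q)_m/(1-q)^m$ and using $(q;q)_{m+N-1}/(q;q)_{N-1}=(q^N;q)_m$, one gets $\frac{[m]_q![N-1]_q!}{[m+N-1]_q!}=\frac{(q;q)_m}{(q^N;q)_m}$, which is the factor in \eqref{eq3.B} since $h_m(1,q,\dots,q^{N-1})=(q^N;q)_m/(q;q)_m$. Thus for $X\in\G_N$ the sought measure is $B^q_N(X):=\La^N_1(X,\,\cdot\,)$, and for arbitrary $X\in\GG_N$ its continuous extension from Section~\ref{sect4} does the job: the relation \eqref{eq4.A} with $K=1$ and $\nu=(m)$ reads $\sum_y\La^N_1(X,y)\,y^m=h_m(X)/h_m(1,\dots,q^{N-1})$, i.e.\ precisely \eqref{eq7.E}. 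Existence follows. The support bound is Proposition~\ref{prop2.C} with $K=1$ (alternatively it is visible from \eqref{eq3.N}, which vanishes unless $y$ lies in the smallest segment through $X$), and uniqueness is the one-dimensional moment problem: a compactly supported measure on $\R$ is determined by its moments, because polynomials are dense in the continuous functions on the compact support.

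For part~(ii) I would argue by linearity in $f$, first on monomials. For $f(t)=t^n$ the left side of \eqref{eq7.F} is given by \eqref{eq7.G}: it is $0$ for $n\le N-2$ and $h_{n-N+1}(X)$ for $n\ge N-1$. On the right, $D_q^{N-1}t^n=\frac{[n]_q!}{[n-N+1]_q!}t^{n-N+1}$ (and $0$ for $n\le N-2$), so pairing with \eqref{eq7.E} at $m=n-N+1$ gives $\frac1{[N-1]_q!}\cdot\frac{[n]_q!}{[m]_q!}\cdot\frac{[m]_q![N-1]_q!}{[n]_q!}h_m(X)=h_{n-N+1}(X)$. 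Hence \eqref{eq7.F} holds for every polynomial $f$. (Equivalently, one may test on the resolvent $f(t)=\tfrac1{1-tz^{-1}}$: from $D_q\tfrac1{(tz^{-1};q)_M}=[M]_q z^{-1}\tfrac1{(tz^{-1};q)_{M+1}}$ one finds $D_q^{N-1}f=[N-1]_q!\,z^{-(N-1)}(tz^{-1};q)_N^{-1}$, and the generating identity \eqref{eq3.A} matches this against the classical divided difference $z\prod_i(z-x_i)^{-1}$ of $f$ at the knots.)

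The substantive step is the passage from polynomials to arbitrary $f\in C^{N-1}(\L)$; this is where I expect the only real difficulty. Let $K:=\bar\L\cap[a,b]$ be a compact segment carrying $\operatorname{supp}B^q_N(X)$. Both sides of \eqref{eq7.F} are continuous in the norm $\|f\|:=\sup_K|f|+\sup_K|D_q^{N-1}f|$: the left side is the fixed finite combination $\sum_i f(x_i)/\prod_{j\ne i}(x_i-x_j)$, and the right side is at most $[N-1]_q!^{-1}\sup_K|D_q^{N-1}f|$ because $B^q_N(X)$ is a probability measure. (When all knots have one sign, $\operatorname{supp}B^q_N(X)$ is finite and one concludes at once, since monomials span all functions on a finite set.) In general it suffices to show that polynomials are $\|\cdot\|$-dense. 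Given $f$, approximate $g:=D_q^{N-1}f$ uniformly on $K$ by polynomials $p_k$, let $P_k$ be the $(N-1)$-fold $q$-antiderivative of $p_k$ normalized by $D_q^\ell P_k(0)=D_q^\ell f(0)$ for $\ell=0,\dots,N-2$ (these data exist by Definition~\ref{def7.A}); then $P_k$ is a polynomial with $D_q^{N-1}P_k=p_k$. The difference $h_k:=P_k-f$ satisfies $D_q^\ell h_k(0)=0$ for $\ell\le N-2$ and $D_q^{N-1}h_k=p_k-g\to0$ uniformly. Integrating the $q$-Newton--Leibniz rule \eqref{eq7.C} $N-1$ times from $0$ and using that the canonical measure is finite on $K$ yields $\sup_K|h_k|\le C\sup_K|p_k-g|\to0$. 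Thus $P_k\to f$ in $\|\cdot\|$, and letting $k\to\infty$ in \eqref{eq7.F} for $P_k$ gives it for $f$. The one point requiring care is exactly this $q$-Taylor remainder estimate, bounding $\sup_K|h_k|$ by $\sup_K|D_q^{N-1}h_k|$ once the lower-order $q$-derivatives at $0$ are matched; granting it, the theorem follows.
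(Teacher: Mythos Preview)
Your proposal is correct and follows essentially the same route as the paper: for (i) you identify $B^q_N(X)$ with $\La^N_1(X,\cdot)$ and read off the moments from \eqref{eq3.B}/\eqref{eq4.A}, and for (ii) you verify \eqref{eq7.F} on monomials via \eqref{eq7.G} and then extend to general $f\in C^{N-1}(\L)$ by approximating $D_q^{N-1}f$ and controlling the remainder through the iterated $q$-Newton--Leibniz formula---which is exactly the paper's argument (there stated as the $q$-Taylor expansion of $f$ about $0$ with integral remainder, followed by polynomial approximation of $D_q^{N-1}f$ on a compact piece of $\bar\L$). Your write-up simply makes explicit the density-in-norm step that the paper compresses into one sentence; the added observations (the resolvent check and the finite-support shortcut when all knots share a sign) are pleasant but inessential embellishments.
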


\begin{proof}
(i) The uniqueness claim is trivial, because the moments do not grow too fast. The existence follows from \eqref{eq3.B}: we simply set $B^q_N(X):=\La^N_1(X,\,\cdot\,)$. As was already pointed out in the proof of Proposition \ref{prop3.A}, we have 
$$
h_m(1,q,\dots,q^{N-1})=\frac{[m+N-1]_q!}{[m]_q![N-1]_q!}.
$$
Together with \eqref{eq3.B} this shows that $B^q_N(X)$ has the required moments. The claim concerning the support is evident.

(ii) Formula \eqref{eq7.F} holds true in the case when $f(t)$ is a monomial: this is easily derived from \eqref{eq7.E} and \eqref{eq7.G}. The general case is reduced that case: we write 
$$
f(t)=\sum_{n=0}^{N-2}\frac{D_q^n f(0)}{[n]_q!}t^n+\int_0^t d_qt_1\int_0^{t_1}d_q t_2\dots\int_0^{t_{N-2}}D_q^{N-1} f(t_{N-1})d_q t_{N-1}
$$
and then approximate the function $D_q^{N-1} f(t)$, on a sufficiently large interval $[-a,a]\cap\bar\L$, by polynomials. 
\end{proof}

\begin{definition}\label{def7.B}
We call the measure $B^q_N(X)$ the \emph{$q$-B-spline} and the formula \eqref{eq7.F}, the \emph{$q$-Hermite-Genocchi formula}.  
\end{definition}

\begin{corollary}\label{cor7.A}
Fix an arbitrary  $f\in C^{N-1}(\L)$. The function $X=(x_1,\dots,x_N)\mapsto f[x_1,\dots,x_N]$, initially defined on $\G_N$, admits a continuous extension to the space $\GG_N\supset\G_N$ given by the $q$-Hermite--Genocchi formula \eqref{eq7.F}. In particular,
\begin{equation}\label{eq7.K}
\lim_{x_1,\dots,x_N\to0}f[x_1,\dots,x_N]=\frac{D_q^{N-1} f(0)}{[N-1]_q!}.
\end{equation}
\end{corollary}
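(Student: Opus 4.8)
The plan is to reduce the statement to the $q$-Hermite--Genocchi formula \eqref{eq7.F} combined with the weak continuity of the kernel $\La^N_1$ furnished by Proposition \ref{prop4.A}. The idea is that the right-hand side of \eqref{eq7.F} can be rewritten, using the identification $B^q_N(X)=\La^N_1(X,\,\cdot\,)$ from Theorem \ref{thm7.A}(i), as
\[
g(X):=\frac1{[N-1]_q!}\,\langle D_q^{N-1}f,\,\La^N_1(X,\,\cdot\,)\rangle,
\]
an expression that makes sense for every $X\in\GG_N$, not only for configurations with pairwise distinct knots. By Theorem \ref{thm7.A}(ii) we have $g(X)=f[x_1,\dots,x_N]$ whenever $X=(x_1,\dots,x_N)\in\G_N$. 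Since $\G_N$ is dense in $\GG_N$, it therefore suffices to prove that $g$ is continuous on $\GG_N$: this $g$ will then automatically be the (unique) continuous extension of the divided difference claimed in the corollary.

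To prove continuity of $g$, I would first note that $D_q^{N-1}f$ is a continuous function on $\bar\L$. Indeed, the hypothesis $f\in C^{N-1}(\L)$ says precisely that the limit $D_q^{N-1}f(0)=\lim_{t\to0}D_q^{N-1}f(t)$ exists (Definition \ref{def7.A}), so $D_q^{N-1}f\in C^0(\L)$. Now let $X(n)\to X_0$ be a convergent sequence in $\GG_N$, and fix $a>0$ with $X_0\subset[-a,a]$; then $X(n)\subset[-a,a]$ for all large $n$, and by the support estimate of Theorem \ref{thm7.A}(i) (equivalently, Proposition \ref{prop2.C}) every measure $\La^N_1(X(n),\,\cdot\,)$, as well as $\La^N_1(X_0,\,\cdot\,)$, is concentrated on the compact set $\bar\L\cap[-a,a]$. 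By Proposition \ref{prop4.A} these measures converge weakly to $\La^N_1(X_0,\,\cdot\,)$. Since $D_q^{N-1}f$ restricted to $\bar\L\cap[-a,a]$ is a bounded continuous function, the pairings converge, i.e. $g(X(n))\to g(X_0)$. Hence $g$ is continuous, which completes the proof of the main assertion.

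For the limiting relation \eqref{eq7.K} I would specialize to $X_0=0^N$. The smallest segment containing $0^N$ is the single point $\{0\}$, so by the support claim in Theorem \ref{thm7.A}(i) the probability measure $B^q_N(0^N)$ is concentrated at $0$, i.e. $B^q_N(0^N)=\delta_0$ (equivalently, its moments $h_m(0^N)$ vanish for $m\ge1$ and equal $1$ for $m=0$). Pairing against $\delta_0$ gives $\langle D_q^{N-1}f,\delta_0\rangle=D_q^{N-1}f(0)$, whence $g(0^N)=D_q^{N-1}f(0)/[N-1]_q!$, which is \eqref{eq7.K}. The one genuinely delicate point in the argument is the passage to the limit in the pairing: because $\bar\L$ is noncompact, weak convergence by itself does not control integrals of unbounded continuous functions, so one must exploit the uniform boundedness of the supports (Theorem \ref{thm7.A}(i) / Proposition \ref{prop2.C}) in order to reduce everything to a fixed compact set on which $D_q^{N-1}f$ is genuinely bounded and continuous. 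Everything else is routine.
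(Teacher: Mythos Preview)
Your proof is correct and follows essentially the same route as the paper's: both arguments identify $B^q_N(X)=\La^N_1(X,\cdot)$, restrict to configurations in a fixed interval $[-a,a]$ so that all measures are supported on a common compact set, use weak continuity of $X\mapsto\La^N_1(X,\cdot)$ (you cite Proposition~\ref{prop4.A} directly, while the paper re-derives it in one line via the moments), and then pair against the bounded continuous function $D_q^{N-1}f$; the specialization to $X_0=0^N$ and the identification $B^q_N(0^N)=\delta_0$ for \eqref{eq7.K} is also the same.
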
 

\begin{proof}
With no loss of generality we may consider only configurations $X$ contained in a fixed interval $[-a,a]\subset\R$. But then  the measure $B^q_N(X)$ is concentrated on $[-a,a]$, too. It depends continuously on $X$, because the moments are continuous functions on $\GG_N$. Then the existence of the continuous extension follows from the $q$-Hermite--Genocchi formula \eqref{eq7.F}. The limit relation \eqref{eq7.K} follows from the fact that, as the knots converge to $0$, the $q$-B-spline converges to the delta measure at $0$.
\end{proof}

Let $f_1,\dots,f_n$ be functions on $\L$. For  $X=(x_1,\dots,x_n)\in\G_n$ we set
\begin{equation}\label{eq7.H}
F(X)=F(x_1,\dots,x_n):=\frac{\det\left[f_j(x_i)\right]_{i,j=1}^n}{V(x_1,\dots,x_n)}.
\end{equation}
The definition is correct, because $F(x_1,\dots,x_n)$ is symmetric with respect to permutations of the arguments. 

\begin{corollary}\label{cor7.B}
Suppose $f_1,\dots,f_n\in C^{n-1}(\L)$. Then the function \eqref{eq7.H}, initially defined on $\G_n$, admits a continuous extension to $\GG_n\supset\G_n$ given by
\begin{equation}\label{eq7.L}
F(x_1,\dots,x_n)=\frac{(-1)^{n(n-1)/2}}{\prod_{\ell=1}^n(\ell-1)!}\cdot\det\left[\langle D_q^{\ell-1}f_j,\, B^q_\ell(x_1,\dots,x_\ell)\rangle\right]_{\ell,j=1}^n.
\end{equation}
In particular,
\begin{equation}\label{eq7.F1}
\lim_{x_1,\dots,x_n\to0}F(x_1,\dots,x_n)=\frac{(-1)^{n(n-1)/2}}{\prod_{\ell=1}^n(\ell-1)!}\cdot\det [D_q^{(\ell-1)}f_j(0)]_{\ell,j=1}^n.
\end{equation}
\end{corollary}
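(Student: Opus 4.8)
The plan is to reduce Corollary~\ref{cor7.B} to the continuous extension of a single divided difference supplied by Corollary~\ref{cor7.A}, after first rewriting the quotient \eqref{eq7.H} as a determinant of divided differences. The device is Newton's divided-difference interpolation, a classical way of turning a generalized Vandermonde determinant into a determinant of divided differences. For pairwise distinct knots $x_1<\dots<x_n$ in $\L$ and an arbitrary function $g$ on $\L$, Newton's formula evaluated exactly at the nodes gives
$$
g(x_i)=\sum_{\ell=1}^{n} g[x_1,\dots,x_\ell]\prod_{m=1}^{\ell-1}(x_i-x_m), \qquad i=1,\dots,n,
$$
the interpolation remainder vanishing because $\prod_{m=1}^{n}(x_i-x_m)=0$, and the terms with $\ell>i$ dropping out because the product then contains the factor $x_i-x_i=0$. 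Reading this as a matrix factorization applied to $g=f_1,\dots,f_n$, I obtain
$$
\big[f_j(x_i)\big]_{i,j=1}^n=P\cdot\big[f_j[x_1,\dots,x_\ell]\big]_{\ell,j=1}^n,
$$
where $P$ is the lower triangular matrix with entries $P_{i\ell}=\prod_{m=1}^{\ell-1}(x_i-x_m)$, so that $\det P=\prod_{1\le m<i\le n}(x_i-x_m)=\prod_{1\le i<j\le n}(x_j-x_i)$.

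Taking determinants, dividing by $V(x_1,\dots,x_n)$, and using $\prod_{i<j}(x_j-x_i)=(-1)^{n(n-1)/2}V(x_1,\dots,x_n)$, this yields on $\G_n$ the clean identity
$$
F(x_1,\dots,x_n)=(-1)^{n(n-1)/2}\det\big[f_j[x_1,\dots,x_\ell]\big]_{\ell,j=1}^n.
$$
Now I would apply the $q$-Hermite--Genocchi formula \eqref{eq7.F} entrywise: since $f_j\in C^{n-1}(\L)\subseteq C^{\ell-1}(\L)$ for every $\ell\le n$, each entry rewrites as $f_j[x_1,\dots,x_\ell]=[\ell-1]_q!^{-1}\langle D_q^{\ell-1}f_j,\,B^q_\ell(x_1,\dots,x_\ell)\rangle$. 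Pulling the row-dependent factors $[\ell-1]_q!^{-1}$ out of the determinant row by row assembles the scalar $(-1)^{n(n-1)/2}\prod_{\ell=1}^n[\ell-1]_q!^{-1}$ times the determinant in \eqref{eq7.L}, which is the claimed identity.

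It remains to pass from $\G_n$ to its closure $\GG_n$. Here I would observe that the right-hand side of \eqref{eq7.L} is already defined and continuous on all of $\GG_n$: by Corollary~\ref{cor7.A} each $q$-B-spline $B^q_\ell(x_1,\dots,x_\ell)$ varies continuously with its knots, since its moments \eqref{eq7.E} are continuous functions of the configuration, so each matrix entry is continuous, and a determinant depends polynomially on its entries. As \eqref{eq7.L} holds on the dense subset $\G_n\subset\GG_n$, the continuous right-hand side is forced to be the continuous extension of $F$. Specializing to the configuration $0^n$ and using \eqref{eq7.K}, equivalently the fact that $B^q_\ell(0,\dots,0)$ is the unit mass at $0$, so that $\langle D_q^{\ell-1}f_j,\,B^q_\ell(0,\dots,0)\rangle=D_q^{\ell-1}f_j(0)$, gives \eqref{eq7.F1}.

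I expect the only genuine input to be Corollary~\ref{cor7.A}, and the only thing to get right by hand to be the triangular factorization coming from Newton interpolation together with the sign $(-1)^{n(n-1)/2}$; once \eqref{eq7.H} has been expressed as a determinant of divided differences, the boundary passage is automatic from the continuity provided by Corollary~\ref{cor7.A} and the density of $\G_n$ in $\GG_n$.
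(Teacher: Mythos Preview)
Your proposal is correct and follows essentially the same route as the paper: both arguments first establish the identity $F(x_1,\dots,x_n)=(-1)^{n(n-1)/2}\det[f_j[x_1,\dots,x_\ell]]_{\ell,j=1}^n$ on $\G_n$ and then invoke Corollary~\ref{cor7.A} entrywise. The only cosmetic difference is that you obtain this identity via the Newton interpolation factorization $[f_j(x_i)]=P\cdot[f_j[x_1,\dots,x_\ell]]$ with a lower-triangular $P$, whereas the paper obtains it by iterated row subtractions; these are two descriptions of the same elementary row reduction.
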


\begin{proof}
Observe that
\begin{equation}\label{eq7.I}
\det[f_j(x_i)]_{i,j=1}^n=(-1)^{n(n-1)/2}V(x_1,\dots,x_n)\det[f_j[x_1,\dots,x_\ell]]_{\ell,j=1}^n.
\end{equation}
Indeed, to see this we perform the following elementary transformations under the rows of the matrix  rows $[f_j(x_i)]_{i,j=1}^n$. On the first step, we subtract from the $i$th row the $(i-1)$th one, starting with $i=n$ and ending with $i=2$. On the second step we iterate the procedure, starting from $i=n$ and ending with $i=3$, and so on. 

By virtue of \eqref{eq7.I}, we have 
\begin{equation}\label{eq7.J}
F(x_1,\dots,x_n)=(-1)^{n(n-1)/2}\det[f_j[x_1,\dots,x_\ell]]_{\ell,j=1}^n.
\end{equation}
Now \eqref{eq7.L} and \eqref{eq7.F1}  follow from Corollary \ref{cor7.A}, where we replace $N$ by $\ell$. 
\end{proof}

\section{Total systems of vectors in $C_0(\GG_K)$}\label{sect8}

In this section $K$ is a fixed positive integer. We denote by $C_0(\GG_K)$ the space of real-valued continuous functions on $\GG_K$ vanishing at infinity. This is a separable Banach space with respect to the supremum norm.

Recall (see Section \ref{sect3}) that $\GG_{K,n}$ denotes the set of configurations $X\in\GG_K$ containing $0$ with multiplicity $n$. We also write
$$
\GG_{K,\ge n}:=\GG_{K,n}\cup\dots\cup\GG_{K,K}.
$$
This is a closed subset of $\GG_K$.

Given a finite subset $A\subset\L$, we denote by $\card(A)$ its cardinality; throughout this section we consider only subsets $A$ with $\card(A)\le K$.  By $\GG_K(A)$ we denote the set of configurations $X\in\GG_K$ containing $A$.  It is both open and closed in $\GG_K$. Its intersection with $\GG_{K,n}$ is nonempty if and only if $n\le K-\card(A)$, and the intersection with $\GG_{K,K-\card(A)}$ consists of a single element, which we denote by $X_A$:  
$$
X_A:=0^{K-\card(A)}\cup A.
$$

If $\card(A)=K$, then the set $\GG_K(A)$ is the singleton $\{A\}$ and $X_A=A$. If $A$ is empty, then $\GG_K(\varnothing)$ is the whole space $\GG_K$ and $X_{\varnothing}=0^K$. 

In the next proposition we assume that for each subset  $A\subset\GG_K$ with $\card(A)\le K$ we a given a function $f_A\in C_0(\GG_K)$ supported by $\GG_K(A)$ and such that $f_A(X_A)\ne0$. In particular, if $\card(A)=K$, then $f_A$ is proportional to the  delta function at $A$, and if $A=\varnothing$, then the only condition on $f_{\varnothing}$ is that it belongs to $C_0(\GG_K)$ and takes a nonzero value at $0^K$. 

As will be shown later, examples are provided by functions of the form \eqref{eq7.H}.  

\begin{proposition}\label{prop8.A}
For an arbitrary choice of the functions $f_A$ as indicated above, they form a total system in $C_0(\GG_K)$, i.e. their linear span is dense in the norm topology.  
\end{proposition}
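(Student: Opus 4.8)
The plan is to prove totality by a duality argument: I will show that any bounded linear functional on $C_0(\GG_K)$ that annihilates all the $f_A$ must be zero. By the Riesz representation theorem, such a functional is a finite signed measure $\mu$ on the locally compact space $\GG_K$, and the hypothesis becomes $\langle f_A, \mu\rangle = 0$ for every finite $A\subset\L$ with $\card(A)\le K$. The goal is to deduce $\mu = 0$.

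The natural induction is on $\card(A)$ in decreasing order, exploiting the support condition $\operatorname{supp} f_A\subset\GG_K(A)$. First I would treat the top stratum: when $\card(A)=K$, the set $\GG_K(A)=\{A\}$ is a singleton and $f_A$ is a nonzero multiple of the delta function at $A$, so $\langle f_A,\mu\rangle=0$ forces $\mu(\{A\})=0$. Hence $\mu$ has no atoms on $\G_K=\GG_{K,0}$ (the configurations with no point at $0$). The inductive step is where the support structure pays off. Fix $A$ with $\card(A)=K-n$, so the distinguished point $X_A=0^{\,n}\cup A$ sits in the stratum $\GG_{K,n}$, which is the \emph{lowest} stratum meeting $\GG_K(A)$. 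The idea is to localize near $X_A$: since $\GG_K(A)$ is open and closed, and since by the inductive hypothesis $\mu$ already vanishes on all configurations in $\GG_K(A)$ containing strictly more than $n-\card(\cdot)$ lattice points beyond $A$, I would argue that testing against $f_A$ (whose value at $X_A$ is nonzero) isolates the mass of $\mu$ at the single point $X_A$. The essential geometric fact is that within $\GG_K(A)$, the point $X_A$ is isolated from below in the stratification: its only neighbors in $\GG_K(A)$ are configurations with \emph{fewer} points at $0$ (equivalently more points on $\L$), which have already been handled. Concretely, $X_A$ is an isolated point of the closed set $\GG_{K,\ge n}\cap\GG_K(A)$, so one can write $f_A = c\,\mathbf 1_{\{X_A\}} + g$ with $c=f_A(X_A)\ne 0$ and $g$ supported away from $X_A$ on configurations already known to be $\mu$-null; this yields $\mu(\{X_A\})=0$.

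Iterating downward through $n=0,1,\dots,K$ shows that $\mu$ vanishes on every point of $\GG_K$ of the form $X_A$, and ranging over all admissible $A$ exhausts all of $\GG_K$. Indeed, every configuration $X\in\GG_K$ equals $X_A$ for $A=X\cap\L$, so $\mu(\{X\})=0$ for all $X$, whence $\mu=0$ since $\GG_K$ is countable. Therefore no nonzero functional annihilates the family $\{f_A\}$, and by the Hahn--Banach theorem their linear span is dense in $C_0(\GG_K)$.

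The main obstacle I anticipate is making the localization in the inductive step fully rigorous in the topology of $\GG_K$: one must verify that $X_A$ is genuinely an isolated point of $\GG_{K,\ge n}\cap\GG_K(A)$ and that the ``remainder'' $g=f_A-f_A(X_A)\mathbf 1_{\{X_A\}}$ is still a legitimate element of $C_0(\GG_K)$ whose support lies in the already-annihilated region. This hinges on the precise description of the topology near points with several coordinates at $0$ (the $\epsi$-closeness of Definition \ref{def6.A} and the stratification whose closure relations are spelled out in Section \ref{sect6}), and on the fact that approaching $X_A$ from below within $\GG_K(A)$ forces extra lattice points to coalesce at $0$. Once the isolatedness is established, the rest is a clean finite downward induction.
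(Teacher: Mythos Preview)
Your duality argument is correct and takes a genuinely different route from the paper's proof. The paper argues constructively: it introduces the filtration $\V^m\subset C_0(\GG_K)$ of functions vanishing on $\GG_{K,\ge m}$, observes $\V=\W+\V^K$, and then proves $\V^n\subseteq\overline{\W+\V^{n-1}}$ by a compactness--uniform-continuity argument (truncate to $\GG_K[-a,a]$, then to finitely many clopen sets $\GG_K(A)$, then subtract off the values at the $X_A$). Your approach instead exploits that $\GG_K$ is countable: any annihilating functional is a purely atomic finite signed measure, and the stratified support of the $f_A$ lets you kill its atoms one stratum at a time by downward induction on $\card(A)$. This is shorter and avoids the approximation work entirely; the paper's method, on the other hand, is more hands-on and would continue to make sense in settings where the level sets are not discrete.

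Your anticipated obstacle dissolves on inspection. First, $\GG_{K,\ge n}\cap\GG_K(A)$ is literally the singleton $\{X_A\}$: every $X\in\GG_K(A)$ already contains the $K-n$ lattice points of $A$, hence has at most $n$ zeros, so the intersection with $\GG_{K,\ge n}$ picks out $X_A$ alone. Second, and more to the point, you do \emph{not} need the remainder $g=f_A-f_A(X_A)\mathbf 1_{\{X_A\}}$ to lie in $C_0(\GG_K)$ (it does not, since $\{X_A\}$ is not open when $n\ge1$). You only need to integrate it against $\mu$. Since $\GG_K$ is countable, $\mu=\sum_X\mu(\{X\})\de_X$ with $\sum_X|\mu(\{X\})|<\infty$, and for any bounded Borel function $h$ one has $\langle h,\mu\rangle=\sum_X h(X)\mu(\{X\})$. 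The support of $g$ is contained in $\GG_K(A)\setminus\{X_A\}\subset\GG_{K,0}\cup\dots\cup\GG_{K,n-1}$, where every atom of $\mu$ already vanishes by the inductive hypothesis, so $\langle g,\mu\rangle=0$ term by term. The decomposition you need is measure-theoretic, not topological.
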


\begin{proof}
Let $\V$ be the shorthand notation for the Banach space $C_0(\GG_K)$. We denote by $\V^m$ the subspace of $\V$ formed by the functions vanishing on $\GG_{K,\ge m}$. Evidently,
$$
\V\supset\V^K\supset\dots\supset\V^0=\{0\}.
$$

Next, let $\W$ denote the linear span of the functions $f_A$. Observe that 
\begin{equation}\label{eq8.A}
\V=\W+\V^K.
\end{equation}
Indeed, $\V^K\subset\V$ consists of the functions vanishing at $0^K$. If $f\in\V$ is arbitrary, then 
$$
f=f(0^K)f_{\varnothing}+(f-f(0^K)f_{\varnothing})\in\W+\V^K.
$$

We are going to prove the following statement: for any $n$, $1\le n\le K$, one has 
\begin{equation}\label{eq8.B}
\V^n\subseteq\overline{\W+\V^{n-1}},
\end{equation}
where the bar means closure.

Once \eqref{eq8.B} is established, we immediately get the desired equality $\V=\overline{\W}$, because we may write
$$
\V=\W+\V^K\subseteq\overline{\W+\V^{K-1}}\subseteq\dots\subseteq\overline{\W+\V^0}=\overline{\W}.
$$
Here the first equality is \eqref{eq8.A} and each inclusion is justified by \eqref{eq8.B}. 

We proceed now to the proof of \eqref{eq8.B}. From now on and till the end of the proof $n$ is fixed and $\mathscr A$ denotes the set of all subsets $A\subset\L$ with $\card(A)=K-n+1$. We have a disjoint union decomposition 
$$
\GG_K=\GG_{K,\ge n}\sqcup\left(\GG_{K,n-1}\sqcup\dots\sqcup\GG_{K,0}\right),
$$
where the set in the parentheses coincides with $\bigcup_{A\in\mathscr A}\GG_K(A)$. Therefore,
$$
\GG_K=\GG_{K,\ge n}\sqcup \bigcup_{A\in\mathscr A}\GG_K(A).
$$

Fix an arbitrary function $f\in\V^n$ and show that it can be approximated by functions from $\W+\V^{n-1}$. 
This is done in three steps.

1. Recall that $\GG_K[-a,a]$ denotes the set of configurations contained in $[-a,a]\subset\R$; this is an open compact set in $\GG_K$. Since $f$ vanishes at infinity, we have
$$
\lim_{a\to+\infty}\sup_{X\notin \GG_K[-a,a]}|f(X)|=0.
$$
Therefore, without loss of generality, we may assume that $f$ vanishes outside $\GG_K[-a,a]$ for some $a>0$.

2. The set $\GG_K[-a,a]$ is a compact uniform space, hence $f$ is uniformly continuous on it. Since $f$ vanishes on $\GG_{K,\ge n}\cap\GG_K[-a,a]$, it follows, that given small $\de>0$, there exists a small $\epsi>0$ such that $|f(X)|<\de$ whenever $X$ contains at least $n$ points in the interval $[-\epsi,\epsi]$.  In other words, $|f(X)|\ge\de$ implies that $X$ has at least $K-n+1$ points outside $[-\epsi,\epsi]$, which in turn means that $X$ belongs to the union of sets $\GG_K(A)$ such that $A\subset[-a,a]\setminus[-\epsi,\epsi]$. But there are finitely many such $A$'s. We conclude that there exists a finite subset $\mathscr A_0\subset\mathscr A$ such that $|f(X)|<\de$ outside $\bigcup_{A\in\mathscr A_0}\GG_K(A)$. 

3. The above  argument makes it possible to further reduce the problem to the case when $f$ is supported by a set of the form $\bigcup_{A\in\mathscr A_0}\GG_K(A)$, where $\mathscr A_0$ is finite (here we use the fact that any such set is both open and closed in $\GG_K$). Now we write $f$ as a sum of two components, $ f=f'+f''$, where 
$$
f'=\left(\sum_{A\in\mathscr A_0}f(X_A)f_A\right)
$$
and $f'':=f-f'$. Obviously, $f'\in\W$. As for $f''$, it belongs to $\V^{n-1}$. Indeed, to see this we observe that the intersection $\GG_K(A)\cap\GG_{K,n-1}$ consists of the single element $X_A$, which also implies that $f_A(X_{A'})=0$ if $A\ne A'$; it follows that $f''(X_A)=0$ for every $A\in\mathscr A_0$, so that $f''$ vanishes on $\GG_{K,n-1}$, which means that $f''\in\V^{n-1}$. 

This completes the proof. 
\end{proof}

\section{The Feller property}\label{sect9}

Let $\X$ and $\Y$ be two locally compact (but noncompact) spaces, and $C_0(\X)$ and $C_0(\Y)$ be the corrresponding Banach spaces of continuous functions vanishing at infinity. A Markov kernel $\La:\X\dasharrow \Y$ is said to be \emph{Feller} if the corresponding contraction operator maps $C_0(\Y)$ to $C_0(\X)$. 

\begin{theorem}\label{thm9.A}
The kernels $\La^N_K: \GG_N\dasharrow\GG_K$, where $N>K\ge1$, are Feller.
\end{theorem}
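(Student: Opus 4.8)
The plan is to reduce the assertion to a dense subspace of $C_0(\GG_K)$ on which the action of $\La^N_K$ is given by an explicit formula. Recall that $\La^N_K$ is a stochastic kernel, so the associated operator $\La^N_K:\mathscr B(\GG_K)\to\mathscr B(\GG_N)$ is a contraction for the supremum norm, and $C_0(\GG_N)$ is a closed subspace of $\mathscr B(\GG_N)$. Consequently, if $\W\subset C_0(\GG_K)$ is any subspace whose closure is all of $C_0(\GG_K)$ and if $\La^N_K f\in C_0(\GG_N)$ for every $f\in\W$, then by approximation $\La^N_K f\in C_0(\GG_N)$ for every $f\in C_0(\GG_K)$. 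Thus it suffices to exhibit a convenient total system in $C_0(\GG_K)$ and to verify the Feller property on it.

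For the total system I would use the functions $f_{A\mid Z,N,K}$ of \eqref{eq3.L}, one for each finite $A\subset\L$ with $\card(A)=m\le K$, where $Z=(z_1,\dots,z_n)\in(\C\setminus\R)^n$ ($n=K-m$) is chosen with distinct entries. Each such function is supported on $\GG_K(A)$ and, restricted to the free points $Y\setminus A$, has the determinant-over-Vandermonde shape \eqref{eq7.H} with entries $f_j(y)=1/(yz_j^{-1};q)_{N-K+1}$, which lie in $C^\infty(\L)$. Hence Corollary \ref{cor7.B} shows that $f_{A\mid Z,N,K}$ extends continuously across the zero-stratum of $\GG_K(A)$; since $\GG_K(A)$ is open and closed in $\GG_K$ and the extra factor $\prod_{s,r}(y_s-a_r)^{-1}$ is continuous and nonvanishing near $0$ (because $a_r\ne0$), the function belongs to $C(\GG_K)$. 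It vanishes at infinity: if a free point $y_s$ escapes to $\pm\infty$, the entry $f_j(y_s)$ decays like $|y_s|^{-(N-K+1)}$, which outweighs the growth of $V(Y\setminus A)$ and of $\prod_{s,r}(y_s-a_r)$. Finally, using \eqref{eq7.F1} one checks that the value at the distinguished point $X_A=0^{\,n}\cup A$ is nonzero for a suitable choice of $Z$. Therefore the hypotheses of Proposition \ref{prop8.A} are met and the $f_{A\mid Z,N,K}$ form a total system.

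It then remains to prove $\La^N_K f_{A\mid Z,N,K}\in C_0(\GG_N)$, and here the explicit evaluation \eqref{eq3.M} of Theorem \ref{thm3.B} is exactly what is needed (for $m=K$, i.e.\ $f_A=\delta_A$, one uses \eqref{eq3.I} instead). Continuity on $\GG_N$ is automatic: by Proposition \ref{prop4.A} the map $X\mapsto\La^N_K(X,\,\cdot\,)$ is weakly continuous, and $f_{A\mid Z,N,K}\in C_b(\GG_K)$. For the decay at infinity the crucial feature of \eqref{eq3.M} is that its dependence on $X$ enters only through the products $\prod_{x\in X}\prod_{s=1}^n(1-xz_s^{-1})^{-1}$ (outside the integral) and $\prod_{r=1}^m\prod_{x\in X}(1-xw_r^{-1})^{-1}$ (inside the integral). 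Since $n+m=K\ge1$, at least one family of such factors is present; each factor is uniformly bounded over $x\in\L$ (the distance from $1$ to the line $\{xz_s^{-1}:x\in\R\}$ is positive) and tends to $0$ when the corresponding point escapes to $\pm\infty$. As a sequence $X\to\infty$ in $\GG_N$ forces $\max_{x\in X}|x|\to\infty$, one of these factors indeed decays while the finitely many remaining ones stay bounded.

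The hard part will be turning this heuristic into a uniform estimate in the presence of the contour integrals. When $n\ge1$ the outside product alone gives the decay and the $m$-fold integral is bounded uniformly in $X$, so the argument is short. The delicate case is $m\ge1$, $n=0$ (in particular the single columns $X\mapsto\La^N_K(X,A)$), where the decay sits \emph{inside} the integral: for a fixed $w_r$ on the contour $C(a_r)$ the factor $(1-xw_r^{-1})^{-1}\to0$ as $x\to\infty$, but not uniformly along the unbounded contour. I would handle this by splitting each contour into a compact part and a tail; on the compact part dominated convergence, with the integrable majorant $O(|w_r|^{-2})$ furnished by the $dw_r/w_r^2$ factors exactly as in the proof of Lemma \ref{lemma3.A}, gives the limit $0$, while the tail contributes uniformly little. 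This contour estimate, together with the reduction above, yields $\La^N_K f\in C_0(\GG_N)$ for all $f$, and hence the Feller property.
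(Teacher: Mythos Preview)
Your strategy coincides with the paper's: the total family is $\{f_{A\mid Z,N,K}\}$, Proposition~\ref{prop8.A} supplies totality once you know $f_A\in C_0(\GG_K)$ and $f_A(X_A)\ne0$, Corollary~\ref{cor7.B} supplies continuity across the zero stratum, and Theorem~\ref{thm3.B} gives the explicit image from which membership in $C_0(\GG_N)$ is read off. The check $f_A(X_A)\ne 0$ via \eqref{eq7.F1} is exactly what the paper isolates as Lemma~\ref{lemma9.C}(ii).

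One step, however, is not correctly argued. For the decay of $f_A$ at infinity you write that the entry decay $|y_s|^{-(N-K+1)}$ ``outweighs the growth of $V(Y\setminus A)$ and of $\prod_{s,r}(y_s-a_r)$''. Both of these sit in the \emph{denominator} of \eqref{eq3.L}, so their growth only helps; there is nothing to outweigh. The genuine difficulty is the opposite one: as $y_n\to\infty$ the remaining free points may simultaneously coalesce at $0$, driving $V(y_1,\dots,y_{n-1})\to 0$ and the quotient $\det/V$ possibly to infinity; a power count in $|y_n|$ alone does not control this, and for $A=\varnothing$ there is no extra factor $\prod(y_s-a_r)$ to rescue you. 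The paper packages the correct argument as Lemma~\ref{lemma9.A}: factor $V(y_1,\dots,y_n)=V(y_1,\dots,y_{n-1})\prod_{i<n}(y_i-y_n)$, expand the determinant along the last row, bound each cofactor over $V(y_1,\dots,y_{n-1})$ \emph{uniformly on $\GG_{n-1}$} via \eqref{eq7.L} (this needs only the boundedness of $D_q^{\ell}f_j$, $\ell\le n-2$, which you have), and then extract the decay from $\prod_{i<n}|y_i-y_n|^{-1}\le((1-q)|y_n|)^{-(n-1)}$, valid for all $y_i$ with $|y_i|<|y_n|$, including $y_i=0$.

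On the image side your discussion is actually more explicit than the paper's, which simply asserts ``it follows from Theorem~\ref{thm3.B}''. Your case split is sound, and the ``delicate case'' $n=0$ can be simplified: since $|1-xw_r^{-1}|$ is bounded below by a positive constant uniformly in $x\in\L$ and $w_r\in C(a_r)$ (the vertical line avoids $\L$), the integrand in \eqref{eq3.I} has an $X$-independent integrable majorant $C\prod_r|w_r|^{-2}$; pointwise convergence to $0$ plus dominated convergence then gives the result without contour splitting.
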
 

For the proof we need two lemmas.

Let $n=2,3,\dots$ and $f_1,\dots,f_n\in C^{n-1}(\L)$. By Corollary \ref{cor7.B}, the function
\begin{equation}\label{eq9.A}
F(Y)=F(y_1,\dots,y_n):=\frac{\det[f_j(x_i)]_{i,j=1}^n}{V(y_1,\dots,y_n)},
\end{equation}
initially defined on $\G_n$, admits a continuous extension to \/ $\GG_n$. Below we keep the same notation $F(Y)$ or $F(y_1,\dots,y_n)$ for the resulting function on $\GG_n$.

\begin{lemma}\label{lemma9.A}
Assume additionally that the functions $f_1,\dots,f_n$ and all their $q$-derivatives up to order $n-1$ are bounded on $\bar\L$.  Then $F$ vanishes at infinity. 
\end{lemma}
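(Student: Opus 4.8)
The plan is to show that $F(Y)\to 0$ as $Y$ escapes to infinity in $\GG_n$, where ``escaping to infinity'' means leaving every compact set $\GG_n[-a,a]$. A configuration $Y\in\GG_n$ leaves all such compacts precisely when at least one of its points has absolute value tending to $+\infty$. Since the points of $Y$ lie on $\L=\{\zeta_\pm q^m\}$ and $q\in(0,1)$, a point of large absolute value corresponds to a very negative exponent $m$; thus the largest point of $Y$ (in absolute value) must be pushed toward $\pm\infty$ along $\L$. So I would fix $\epsi>0$ and produce a bound showing that whenever the leading point $y$ of $Y$ satisfies $|y|\ge R$ for $R$ large, then $|F(Y)|<\epsi$, uniformly over all such $Y$.

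The key tool is the continuous-extension formula \eqref{eq7.L} from Corollary \ref{cor7.B}, which expresses $F$ as a determinant of $q$-B-spline pairings:
\begin{equation*}
F(y_1,\dots,y_n)=\frac{(-1)^{n(n-1)/2}}{\prod_{\ell=1}^n(\ell-1)!}\cdot\det\left[\langle D_q^{\ell-1}f_j,\, B^q_\ell(y_1,\dots,y_\ell)\rangle\right]_{\ell,j=1}^n.
\end{equation*}
The plan is to exploit the hypothesis that each $f_j$ and each $D_q^{\ell-1}f_j$ (for $\ell\le n$) is bounded on $\bar\L$, say by a constant $C$. Since $B^q_\ell(y_1,\dots,y_\ell)$ is a \emph{probability} measure (Theorem \ref{thm7.A}(i)), each pairing $\langle D_q^{\ell-1}f_j,\,B^q_\ell(y_1,\dots,y_\ell)\rangle$ is automatically bounded in absolute value by $\|D_q^{\ell-1}f_j\|_\infty\le C$, uniformly in $Y$. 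This alone gives a uniform bound on $F$, which is a good sanity check but not yet decay. To extract genuine vanishing I would examine the entries involving the \emph{highest} index $\ell=n$, or more precisely the dependence of the spline measures on the large knot $y=y_n$ (the point of maximal absolute value). The support of $B^q_\ell(y_1,\dots,y_\ell)$ is contained in the smallest segment containing the knots, so as $|y_n|\to\infty$ the measures $B^q_\ell$ for those $\ell$ that include $y_n$ spread out toward infinity and their mass drifts to the region where the bounded functions $D_q^{\ell-1}f_j$ can be made to contribute a controllably small or convergent pairing.

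The technically delicate point is that the determinant is a difference of products of pairings, so a crude triangle-inequality bound by $C^n$ does not decay; the decay must come from a cancellation or from showing that the relevant rows of the matrix become asymptotically \emph{proportional} (hence the determinant tends to $0$). The cleanest route I would pursue is to isolate the row $\ell=n$: as the leading knot $y_n\to\pm\infty$, the $q$-B-spline $B^q_n(y_1,\dots,y_n)$ places asymptotically all its mass near the knots of large absolute value, and one computes that $\langle D_q^{n-1}f_j, B^q_n\rangle$ converges to a limit determined only by the behavior of $f_j$ at the escaping knot --- a limit that is \emph{independent of $j$ up to a common scalar}, forcing the last row to degenerate. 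Making this precise via the explicit moment formula \eqref{eq7.E} and the structure of the spline support is what I expect to be the main obstacle. Concretely, I would quantify how $\langle D_q^{n-1}f_j, B^q_n(y_1,\dots,y_n)\rangle$ depends on the large knot by splitting the spline measure into its mass near $0$ and its mass near the escaping point, and use boundedness of the $D_q$-derivatives together with the geometric decay of $\L$ to show the $j$-dependence of the leading contribution factors out. Once the determinant is shown to vanish in this limit, combined with the uniform boundedness of all entries, I conclude $|F(Y)|\to 0$, i.e.\ $F\in C_0(\GG_n)$, as required.
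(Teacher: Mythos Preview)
Your argument correctly extracts uniform boundedness of $F$ from \eqref{eq7.L} (each matrix entry is the pairing of a bounded function against a probability measure), but the proposed mechanism for \emph{decay} does not work. You claim that as the extreme knot escapes, the last row $\bigl(\langle D_q^{n-1}f_j,\,B^q_n\rangle\bigr)_j$ becomes ``independent of $j$ up to a common scalar''. Nothing in the hypotheses --- mere boundedness of the $f_j$ and of their $q$-derivatives --- forces such alignment; the values of $D_q^{n-1}f_j$ near the escaping knot may depend on $j$ in an arbitrary bounded way. What is actually true is simpler: each last-row entry equals $[n-1]_q!\,f_j[y_1,\dots,y_n]$, and the divided-difference recursion together with the uniform bounds on $f_j[y_1,\dots,y_{n-1}]$ and $f_j[y_2,\dots,y_n]$ (coming from boundedness of $D_q^{n-2}f_j$ via the $q$-Hermite--Genocchi formula) gives $|f_j[y_1,\dots,y_n]|\le C/|y_n-y_1|=O(1/|y_n|)$ on the lattice. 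So the last row tends to $0$, not to a nonzero proportional vector.

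Once you see this, routing through \eqref{eq7.L} is a detour. The paper's proof stays with the ratio form: order the points as $y_1<\dots<y_n$, suppose $y_n\to+\infty$, and factor the Vandermonde as $V(y_1,\dots,y_{n-1})\cdot\prod_{i<n}(y_i-y_n)$ to write
\[
F(Y)=\frac{\det[f_j(y_i)]_{i,j=1}^n}{V(y_1,\dots,y_{n-1})}\cdot\frac1{\prod_{i=1}^{n-1}(y_i-y_n)}.
\]
Expanding the determinant along its last row, the first fraction becomes a finite sum of $f_k(y_n)$ times $(n{-}1)$-variable ratios of the same type, hence uniformly bounded by Corollary~\ref{cor7.B} applied in $n-1$ variables together with the hypothesis; the second fraction is $O\bigl(((1-q)y_n)^{-(n-1)}\bigr)$ since $|y_i-y_n|\ge(1-q)y_n$ for every $i<n$ on $\L$. (This is exactly where $n\ge2$ is used.) The decay thus comes directly from the Vandermonde denominator, not from any degeneration of the spline matrix.
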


\begin{proof}
It is convenient to enumerate the points $y_1,\dots, y_n$ in increasing order. Then $Y\to\infty$ means that at least one of the following conditions holds:  $y_n\to+\infty$ or $y_1\to-\infty$. Let, for definiteness, $y_n\to+\infty$. Then we write $F$ in the form 
$$
F(Y)=\frac{\det[f_j(y_i)]_{i,j=1}^n}{V(y_1,\dots,y_{n-1})}\cdot\frac1{\prod_{i=1}^{n-1}(y_i-y_n)}.
$$
The first fraction is bounded on $\GG_N$: to see this we expand determinant in the numerator on the last  row, apply Corollary \ref{cor7.B}, and use hypotheses about functions $f_1,\dots,f_n$. As for the second fraction, it goes to $0$ as $y_n\to+\infty$, because  $|y_i-y_n|\ge(1-q) y_n$ for any $i=1,\dots,n-1$ (here it is essential that $n\ge2$, otherwise the product would be empty). 
\end{proof}

\begin{lemma}\label{lemma9.C}
Fix $N>K$ and consider the functions
\begin{equation}\label{eq9.B}
f_{j}(y)=\frac1{(yz_j^{-1};q)_{N-K+1}}, \qquad y\in\L, \quad j=1,\dots,n,
\end{equation}
where $z_1,\dots,z_n\in\C\setminus\R$.

{\rm(i)} These functions satisfy the hypotheses of Lemma \ref{lemma9.A}.

{\rm(ii)} Let $F(y_1,\dots,y_n)$ be the corresponding function in $n$ variables, defined by \eqref{eq9.A}. If the numbers $z_1,\dots,z_n$ are pairwise distinct, then $F(0^n)\ne0$.
\end{lemma}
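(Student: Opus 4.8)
The plan is to reduce both parts of the lemma to a single explicit formula for the iterated $q$-derivative of the functions $f_j$. A direct computation with the $q$-Pochhammer symbol, using the definition $D_qg(y)=(g(y)-g(yq))/(y(1-q))$ and the telescoping identity $(ayq;q)_m=(ay;q)_m(1-ayq^m)/(1-ay)$, gives for any $a$ and any $m\ge1$
$$
D_q\frac1{(ay;q)_m}=a[m]_q\frac1{(ay;q)_{m+1}};
$$
iterating this with $a=z_j^{-1}$ and $m=N-K+1$ yields
$$
D_q^\ell f_j(y)=z_j^{-\ell}\,\frac{[N-K+\ell]_q!}{[N-K]_q!}\cdot\frac1{(yz_j^{-1};q)_{N-K+1+\ell}}, \qquad \ell=0,1,2,\dots\,.
$$
Thus every $q$-derivative of $f_j$ is, up to a nonzero scalar, again a function of the same shape but with a longer $q$-Pochhammer symbol in the denominator. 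Both assertions follow from this formula. In particular, since the limits $D_q^\ell f_j(0)=z_j^{-\ell}[N-K+\ell]_q!/[N-K]_q!$ exist for all $\ell$, we have $f_j\in C^{n-1}(\L)$, so that $F$ is defined on $\GG_n$ via Corollary \ref{cor7.B}.

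For part (i) I then need boundedness on $\bar\L$ of each function $\frac1{(yz_j^{-1};q)_M}$ for $M=N-K+1,\dots,N-K+n$. Write the denominator as $\prod_{i=0}^{M-1}(1-yz_j^{-1}q^i)$. The crucial point --- and the only place where the hypothesis $z_j\notin\R$ is used --- is that each factor stays away from $0$ uniformly over $y\in\L$ and $i\ge0$: because $z_j$ is nonreal, $\operatorname{Im}(yz_j^{-1}q^i)$ is bounded away from $0$ whenever $|yz_j^{-1}q^i|$ is bounded away from $0$, while for $|yz_j^{-1}q^i|$ small the factor is close to $1$. Hence $|(yz_j^{-1};q)_M|\ge c_M>0$ on all of $\L$, so the function is bounded; combined with the facts that it is continuous at $0$ (with value $1$) and tends to $0$ as $|y|\to\infty$, this gives boundedness on $\bar\L$. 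This verifies the hypotheses of Lemma \ref{lemma9.A}.

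For part (ii) I invoke formula \eqref{eq7.F1} of Corollary \ref{cor7.B}, which expresses $F(0^n)$ through the matrix $[D_q^{\ell-1}f_j(0)]_{\ell,j=1}^n$. Evaluating the formula above at $y=0$, where $(0;q)_M=1$, gives
$$
D_q^{\ell-1}f_j(0)=z_j^{-(\ell-1)}\,\frac{[N-K+\ell-1]_q!}{[N-K]_q!}.
$$
The factor depending on the row index $\ell$ pulls out of the determinant, leaving the Vandermonde matrix $[z_j^{-(\ell-1)}]_{\ell,j=1}^n$ in the variables $z_1^{-1},\dots,z_n^{-1}$, whose determinant equals $\prod_{1\le i<j\le n}(z_j^{-1}-z_i^{-1})$. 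This is nonzero precisely because the $z_j$ are pairwise distinct (and nonzero, being nonreal). Since the row factors $[N-K+\ell-1]_q!/[N-K]_q!$ and the numerical prefactor in \eqref{eq7.F1} are all nonzero, we conclude $F(0^n)\ne0$.

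I expect the only genuinely delicate step to be the uniform lower bound on the $q$-Pochhammer denominators in part (i): one must check that the estimate holds simultaneously for all points of the unbounded two-sided lattice $\L$ and all the factors, which is exactly where the nonreality of the $z_j$ enters. The remainder is bookkeeping --- the $q$-derivative identity is a one-line computation, and part (ii) is an immediate application of Corollary \ref{cor7.B} together with the Vandermonde structure of the resulting matrix.
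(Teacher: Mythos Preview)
Your proof is correct and follows essentially the same route as the paper: both hinge on the $q$-derivative identity $D_q\{1/(ay;q)_M\}=a[M]_q/(ay;q)_{M+1}$, iterate it, and then invoke Corollary~\ref{cor7.B} (specifically formula~\eqref{eq7.F1}) to reduce $F(0^n)$ to the Vandermonde determinant in $z_1^{-1},\dots,z_n^{-1}$. The only real difference is that you spell out the uniform lower bound on $|(yz_j^{-1};q)_M|$ using $z_j\notin\R$, whereas the paper simply asserts boundedness; your extra detail is sound and does no harm.
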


\begin{proof}
(i) Observe that
\begin{equation}\label{eq9.C}
D_q \left\{\frac1{(yz^{-1};q)_M}\right\}=\frac{[M]_q z^{-1}}{(yz^{-1};q)_{M+1}}, \qquad M=1,2,\dots,
\end{equation}
where the $q$-derivative is taken with respect to variable $y$. Furthermore, the quantity $|(yz^{-1};q)_M|$ tends to infinity as $|y|\to\infty$. It follows that our functions lie in $C^\infty(\L)$ and are uniformly bounded together with all their $q$-derivatives. This proves (i).

(ii) By virtue of \eqref{eq7.F1}, the quantity $F(0^n)$ is equal, within a nonzero number factor, to the determinant $\det[D_q^{\ell-1}f_{j}(0)]_{\ell,j=1}^n$.  Iterating \eqref{eq9.C}  we see that this  determinant is equal, within a nonzero scalar factor, to $V(z_1^{-1},\dots,z_n^{-1})$ and hence does not vanish.
\end{proof}

\begin{proof}[Proof of Theorem \ref{thm9.A}]

The idea of the proof is the following. In Section \ref{sect3} we computed the action of $\La^N_K$ on certain functions. Using Proposition \ref{prop8.A} and the lemmas given above we will show that these functions form a total family in $C_0(\GG_K)$. On the other hand, from the formulas of Section \ref{sect3} it is seen that the images of our functions under the action of $\La^N_K$ lie in $C_0(\GG_N)$. Because the operator with kernel $\La^N_K$ is contractive, it follows that it maps the whole space $C_0(\GG_K)$ into $C_0(\GG_N)$. 

Consider the  functions $f_{A\mid Z,N,K}$ defined in \eqref{eq3.L}. From Theorem \ref{thm3.B} it follows that $\La^N_K f_{A\mid Z,N,K}\in C_0(\GG_N)$. Recall that  $A$ is an arbitrary subset of $\L$ of cardinality $\card A\le K$. We set $m:=\card A$ and $n:=K-m$. It is not necessary to consider all possible $Z$'s; for our purpose it suffices to pick, for every $A$, some $n$-tuple $Z=(z_1,\dots,z_n)$ of pairwise distinct numbers from $\C\setminus\R$. Then we set  $f_A:=f_{A\mid Z,N,K}$. 

We are going to show that the family $\{f_A\}$ obtained in this way satisfies the two hypotheses of Proposition \ref{prop8.A}: namely, $f_A\in C_0(\GG_K)$ and $f_A(0^n\cup A)\ne0$ (when $n\ge1$). This will imply that $\{f_A\}$ is a total family. 

Examine first the simplest case $K=1$. Then either $A=\varnothing$ or $A=\{a\}$, where $a\in\L$. 

If $A=\varnothing$, then $f_A$ is the function $f_\varnothing(y)=\dfrac1{(yz^{-1};q)_N}$. It is evidently in $C_0(\GG_1)$ and does not vanish at $y=0$. 

If $A=\{a\}$, then $n=0$ (hence the second condition disappears) and $f_A$ is the delta function at $a$. Again, it is evidently in $C_0(\GG_1)$.

Let us proceed to the case $K\ge2$. Then $f_A$ is given by formula \eqref{eq3.L}. 

Let us check that $f_A\in C_0(\GG_K)$. The double product in the denominator in the right-hand side of \eqref{eq3.L} causes no problem, and we may ignore it. We may also ignore $V(z_1^{-1},\dots,z_n^{-1})$, which is a nonzero constant. Then we are left with an expression of the form \eqref{eq9.A}. It suffices to show that it depends continuously on variables $y_1,\dots,y_n$ and vanishes at infinity, but this follows from Lemma \ref{lemma9.A} and Lemma \ref{lemma9.C} (i). 

It remains to check that $f_A(0^n\cup A)\ne0$, but this follows from Lemma \ref{lemma9.C} (ii). 
\end{proof}

Our aim is to extend Theorem \ref{thm9.A} to the kernels $\La^\infty_K: \GG^\infty\dasharrow\GG_K$. For this purpose we need one more lemma, where we are dealing with an infinite sequence of $n$-tuples $\{f_{1,N},\dots,f_{n,N}\}$ depending on an index $N$. We suppose that the following conditions hold:

$\bullet$ for any fixed $N$, the corresponding $n$-tuple satisfies the hypotheses of Lemma \ref{lemma9.A};

$\bullet$ as $N\to\infty$, there exist uniform limits $f_{j,N}\to f_{j,\infty}$, and the same holds for all  $q$-derivatives up to order $n-1$. 

We denote by $F_N$ and $F_\infty$ the corresponding functions on $\GG_n$.

\begin{lemma}\label{lemma9.B}
Under these assumptions, $F_N\to F_\infty$ uniformly on $\GG_n$. 
\end{lemma}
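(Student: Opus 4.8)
The plan is to combine the explicit determinantal formula \eqref{eq7.L} of Corollary \ref{cor7.B} with a uniform-in-$N$ estimate of decay at infinity, which together reduce the assertion to uniform convergence on a compact subset.

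First I would note that the two hypotheses jointly produce a \emph{uniform} sup-norm bound. For each fixed $N$ the functions $f_{1,N},\dots,f_{n,N}$ together with their $q$-derivatives up to order $n-1$ are bounded; since moreover $D_q^\ell f_{j,N}\to D_q^\ell f_{j,\infty}$ uniformly for $\ell=0,\dots,n-1$, a uniformly convergent sequence of bounded functions is uniformly bounded, so there is a constant $M$ with $\|D_q^\ell f_{j,N}\|_\infty\le M$ for all $N$ (including $N=\infty$), all $j$, and all $\ell=0,\dots,n-1$.

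Next I would prove uniform tightness at infinity: for every $\epsi>0$ there is $a>0$ such that $|F_N(Y)|<\epsi$ for every $N$ (including $\infty$) and every $Y\notin\GG_n[-a,a]$. This comes from revisiting the proof of Lemma \ref{lemma9.A}. Ordering the points increasingly and treating the case $y_n\to+\infty$, one writes
\[
F_N(Y)=\frac{\det[f_{j,N}(y_i)]_{i,j=1}^n}{V(y_1,\dots,y_{n-1})}\cdot\frac1{\prod_{i=1}^{n-1}(y_i-y_n)}.
\]
Expanding the numerator of the first fraction along its last row displays it as a linear combination of the values $f_{j,N}(y_n)$ with coefficients that are functions of the form \eqref{eq7.H} in the $n-1$ variables $y_1,\dots,y_{n-1}$; by Corollary \ref{cor7.B} each such coefficient is bounded, via \eqref{eq7.L}, by a constant depending only on $M$ and $n$. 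Hence the first fraction is bounded by a constant $C=C(M,n)$ \emph{uniformly in $N$}, while the second factor does not depend on $N$ and obeys $\prod_{i=1}^{n-1}|y_i-y_n|\ge\bigl((1-q)y_n\bigr)^{n-1}$ (here $n\ge2$ is essential). Thus $|F_N(Y)|\le C\bigl((1-q)y_n\bigr)^{-(n-1)}$, and the case $y_1\to-\infty$ is symmetric, giving the claimed tail bound.

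Finally I would treat the compact part. On $\GG_n[-a,a]$ every $q$-B-spline $B^q_\ell(x_1,\dots,x_\ell)$ appearing in \eqref{eq7.L} is a probability measure supported in $[-a,a]$ and does \emph{not} depend on $N$, so
\[
\bigl|\langle D_q^{\ell-1}f_{j,N},B^q_\ell\rangle-\langle D_q^{\ell-1}f_{j,\infty},B^q_\ell\rangle\bigr|\le\|D_q^{\ell-1}f_{j,N}-D_q^{\ell-1}f_{j,\infty}\|_\infty,
\]
and the right-hand side tends to $0$ as $N\to\infty$ uniformly in the configuration. Since all entries are bounded by $M$ and the determinant in \eqref{eq7.L} is a polynomial in its entries, hence Lipschitz on the relevant bounded region, $F_N\to F_\infty$ uniformly on $\GG_n[-a,a]$. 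An $\epsi/3$ argument combining this with the uniform tail bound then yields uniform convergence on all of $\GG_n$. The only real obstacle is the uniform-at-infinity estimate: one must upgrade the pointwise decay furnished by Lemma \ref{lemma9.A} to a bound that is uniform in $N$, and this is exactly where the uniform constant $M$, and hence the per-$N$ boundedness hypothesis, is needed.
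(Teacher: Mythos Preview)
Your argument is correct, but it does substantially more work than needed. The paper's proof is a one-liner: ``This immediately follows from Corollary \ref{cor7.B}.'' The point is that formula \eqref{eq7.L} is valid on all of $\GG_n$, not just on compacta, and the only fact about the $q$-B-spline you need is that it is a probability measure. Thus your inequality
\[
\bigl|\langle D_q^{\ell-1}f_{j,N},B^q_\ell\rangle-\langle D_q^{\ell-1}f_{j,\infty},B^q_\ell\rangle\bigr|\le\|D_q^{\ell-1}f_{j,N}-D_q^{\ell-1}f_{j,\infty}\|_\infty
\]
already holds for \emph{every} configuration in $\GG_n$, since the support of $B^q_\ell$ plays no role in it. Combined with the uniform bound $M$ on all entries (which you correctly derive), the Lipschitz continuity of the determinant gives uniform convergence globally. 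Your uniform-tail estimate via Lemma \ref{lemma9.A} and the $\epsi/3$ splitting are therefore unnecessary detours: your ``compact part'' is already the whole proof once you drop the restriction to $\GG_n[-a,a]$.
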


\begin{proof}
This immediately follows from Corollary \ref{cor7.B}.
\end{proof}

Note that the hypotheses of Lemma \ref{lemma9.B} are satisfied for the functions
\begin{equation}
f_{j,N}(y):=\dfrac1{(yz_j^{-1};q)_{N-K+1}}, \qquad f_{j,\infty}(y)=\dfrac1{(yz_j^{-1};q)_\infty}.
\end{equation}
Below we apply the lemma with these concrete functions. 

\begin{theorem}\label{thm9.B}
The kernels $\La^\infty_K: \GG_\infty\dasharrow \GG_K$, $K=1,2,\dots$, are Feller.
\end{theorem}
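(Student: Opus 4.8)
The plan is to mirror the proof of Theorem \ref{thm9.A}, replacing the finite-level formulas of Section \ref{sect3} by their $N\to\infty$ limits and the uniform-continuity Lemma \ref{lemma9.A} by the uniform-convergence Lemma \ref{lemma9.B}. The overall strategy is unchanged: exhibit a total family in $C_0(\GG_K)$ whose image under $\La^\infty_K$ lies in $C_0(\GG_\infty)$, and then invoke the contractivity of the operator together with Proposition \ref{prop8.A} to conclude that the whole of $C_0(\GG_K)$ is mapped into $C_0(\GG_\infty)$.

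Concretely, I would use the same test functions $f_A=f_{A\mid Z,N,K}$ from \eqref{eq3.L} that were used in the proof of Theorem \ref{thm9.A}; their definition involves the cutoff parameter $N$ only through the Pochhammer length $N-K+1$, so I would let $N\to\infty$ and work with the limiting functions obtained by replacing $(y_iz_j^{-1};q)_{N-K+1}$ by $(y_iz_j^{-1};q)_\infty$. Lemma \ref{lemma9.C}(i) and Lemma \ref{lemma9.A} continue to guarantee that these limiting functions lie in $C_0(\GG_K)$ (the decay estimate used there is uniform in $N$ and survives the limit), while Lemma \ref{lemma9.C}(ii) still gives $f_A(0^n\cup A)\ne0$ since the nonvanishing of $V(z_1^{-1},\dots,z_n^{-1})$ does not depend on $N$. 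Hence the limiting family $\{f_A\}$ again satisfies the hypotheses of Proposition \ref{prop8.A} and is therefore total in $C_0(\GG_K)$.

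The one genuinely new input is the behavior of the images $\La^\infty_K f_A$. Here I would start from Corollary \ref{cor6.A}, which gives the explicit contour-integral formula for $\La^\infty_K(X,Y)$ as the $N\to\infty$ limit of \eqref{eq3.I}, and more generally pass to the limit in Theorem \ref{thm3.B} to compute $\La^\infty_K f_A$. The point to verify is that $\La^\infty_K f_A\in C_0(\GG_\infty)$: continuity in $X\in\GG_\infty$ follows because the limiting integrand is a continuous functional of the configuration $X$ (each factor $\prod_{x\in X}(1-xw_r^{-1})^{-1}$ converges uniformly on compacta thanks to the boundedness of configurations in $\GG_\infty$), and vanishing at infinity follows from the same product structure, since letting the points of $X$ escape forces the relevant factors to decay. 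Lemma \ref{lemma9.B} is exactly the tool that makes the interchange $\lim_N\La^N_K f_{A,N}=\La^\infty_K f_{A,\infty}$ legitimate at the level of the test functions.

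The main obstacle I anticipate is controlling the $N\to\infty$ limit of $\La^N_K f_A$ rigorously and confirming that the limit coincides with $\La^\infty_K$ applied to the limiting test function, rather than merely converging formally. Because $\La^\infty_K$ is defined as a boundary kernel via the coherent-family machinery of Section \ref{sect5}–\ref{sect6}, one must be careful that the action on noncompactly supported functions is justified; the safe route is to use Lemma \ref{lemma9.B} to get uniform convergence $f_{A,N}\to f_{A,\infty}$ in $C_0(\GG_K)$, then combine this with the weak continuity of $\La^N_K(X(N),\,\cdot\,)\to\La^\infty_K(X,\,\cdot\,)$ established in Step 1 of the proof of Theorem \ref{thm6.A}. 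Since each $\La^N_K$ is a contraction and $\La^N_K f_{A,N}\in C_0(\GG_N)$ with the target spaces embedding compatibly into $\GG_\infty$, a standard $\epsilon/3$ argument then places $\La^\infty_K f_{A,\infty}$ in $C_0(\GG_\infty)$, which is all that is required to finish via Proposition \ref{prop8.A} and contractivity.
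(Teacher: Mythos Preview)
Your proposal is correct and follows essentially the same route as the paper: pass to the $N\to\infty$ limit in the Section~\ref{sect3} formulas (in particular Theorem~\ref{thm3.B}) using Lemma~\ref{lemma9.B} together with the weak convergence $\La^N_K(X(N),\,\cdot\,)\to\La^\infty_K(X,\,\cdot\,)$ from Step~1 of Theorem~\ref{thm6.A}, and then repeat the proof of Theorem~\ref{thm9.A} verbatim with the limiting test functions. The paper's own proof is a terse two-sentence version of exactly this; your write-up simply unpacks the details (the $\epsilon/3$ justification, the verification that the limiting $f_A$ still satisfy the hypotheses of Proposition~\ref{prop8.A}) that the paper leaves implicit.
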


\begin{proof}
Observe that all claims and formulas in Section \ref{sect3} have evident analogues with the kernels $\La^\infty_K$ replacing the kernels $\La^N_K$. Indeed, given $X\in\GG_\infty$, we approximate it by a sequence $X(N)\in\GG_N$ and then pass to the limit as $N\to\infty$. The limit transition is justified by using Lemma \ref{lemma9.B} and the fact that the measures $\La^N(X(N),\,\cdot\,)$ weakly converge to the measure $\La^\infty_K(X,\,\,\cdot\,)$. After this we repeat the same argument.
\end{proof}

Note that, in the context of the graph $\GT$ and its $q$-boundary, the $N=\infty$ version of Proposition  \ref{prop3.B} was earlier obtained by Gorin in a different way (see claim 2 of Theorem 1.1 in \cite{Gorin}).

\section{Concluding remarks}\label{sect10}

\subsection{Hierarchy of splines}
Besides $q$-B-splines there exists another discrete analogue of the classical B-splines, the so-called \emph{$h$-B-splines}. They arise when the $q$-lattice is replaced by the ordinary lattice $\Z$. Letting $q\to1$ and focusing on a small neighborhood of the point $\zeta_+\in\L$, one can degenerate the $q$-B-splines into the $h$-B-splines (Simeonov and Goldman, \cite[Appendix]{SG}). Further, in a natural scaling limit the $h$-B-splines degenerate into the classical B-splines. 

On the other hand, one can directly degenerate the $q$-B-splines into the classical B-splines.

\subsection{Hierarchy of branching graphs}
The hierarchy of splines mentioned above corresponds to the following hierarchy of branching graphs: the top position is occupied by the graph $\G$, the classical Gelfand--Tsetlin graph is in the middle, and the object at the bottom is a continuous analogue of $\GT$. The latter object is not a graph in the strict sense, because its levels are continuous. Kerov and I called it the ``graph of spectra'', as it describes the branching of eigenvalues of Hermitian matrices. 

In all three cases, the Markov kernels $\La^N_1$ are given by the corresponding versions of splines ($q$-B-splines, $h$-B-splines, and conventional B-splines, respectively). More generally, in all three cases there are determinantal formulas for the more general kernels $\La^N_K$: see respectively Remark \ref{rem3.A};  Borodin--Olshanski \cite{BO-AdvMath}; Olshanski \cite{Ols-JLT} and Faraut \cite[Theorem 6.2]{Faraut}.

\subsection{The work of Curry and Schoenberg \cite{CS}}
In Theorem 6 of \cite{CS} (see also the announce in \cite{CS1947}), Curry and Schoenberg described all possible limits of the classical B-splines as $N$ (the number of knots) goes to infinity.  They discovered that the answer is the same as in the problem of classification of totally positive functions, which was solved by Schoenberg \cite{S1947}, \cite{S}. In our understanding,  the problem investigated by Curry and Schoenberg is a part of the problem of describing the boundary of the graph of spectra (see Olshanski--Vershik \cite[Section 8]{OV}).  

An analogue of the Curry--Schoenberg result also holds for the $q$-B-splines on $\bar\L$ and the $h$-B-splines on $\Z$, and the limiting objects are again parameterized by the points of the boundary of the corresponding graph, i.e. $\GG$ and $\GT$, respectively.

\subsection{$q$-Laplace transform}
Given $N\ge2$ and a (complex) measure $M$ on $\L$, define its transform $\mathbb L_N M$ as the function of complex variable $z$, given by
\begin{equation}\label{eq3.Q1}
\varphi(z)=(\mathbb L_N M)(z):=\sum_{y\in\L}\frac1{(yz^{-1};q)_N} M(y).
\end{equation}

Lemma \ref{lemma3.A} says that the inverse transform is given by
\begin{equation}\label{eq3.Q2}
M(y)=(\mathbb L_N^{-1}\varphi)(y)=\frac{(1-q^{N-1})|y|}{2\pi\sqrt{-1}}\int_{C(y)}(yz^{-1}q;q)_{N-2}\,\varphi(z)\frac{dz}{z^2}.
\end{equation}

Likewise, in the limit as $N\to\infty$, we obtain two mutually inverse transforms
\begin{equation}\label{eq3.Q3}
\varphi(z)=(\mathbb L M)(z):=\sum_{y\in\L}\frac1{(yz^{-1};q)_\infty} M(y).
\end{equation}
and
\begin{equation}\label{eq3.Q4}
M(y)=(\mathbb L^{-1}\varphi)(y)=\frac{|y|}{2\pi\sqrt{-1}}\int_{C(y)}(yz^{-1};q)_\infty\,\varphi(z)\frac{dz}{z^2}.
\end{equation}

Recall that $e_q(x):=\dfrac1{(x;q)_\infty}$ and $E_q(x):=(-x;q)_\infty$ are two different $q$-analogues of the exponential function, and observe that these two functions serve as the kernels in  \eqref{eq3.Q3} and \eqref{eq3.Q4}, respectively. For this reason we may consider $\mathbb L$ and $\mathbb L^{-1}$ as a reasonable version of the \emph{$q$-Laplace transform on $\L$ and its inverse}, cf. \cite{Hahn} and  \cite[Section 3.1.1]{BC}. In turn, $\mathbb L_N$ and $\mathbb L_N^{-1}$ may be viewed as  a  \emph{truncated version} of $\mathbb L$ and $\mathbb L^{-1}$. 

Proposition \ref{prop3.A} shows that the $\mathbb L_N$-transform of the $q$-B-spline with $N$ knots admits a simple explicit expression. In the context of the classical B-splines, a similar result was discovered by Curry and Schoenberg, see Lemma 6 in \cite{CS}. That lemma plays a key role in the proof of Theorem 6 from their paper.

\medskip

Institute for Information Transmission Problems, Moscow, Russia

National Research University Higher School of Economics, Moscow, Russia

Email: olsh2007@gmail.com

\end{document}